\newtheorem{thm}{Theorem}[section]
\newtheorem*{jthm}{Theorem}
\newtheorem{conj}{Conjecture}
\newtheorem{ques}[thm]{Question}
\newtheorem{cor}[thm]{Corollary}
\newtheorem{lem}[thm]{Lemma}
\newtheorem{prop}[thm]{Proposition}
\theoremstyle{definition}
\newtheorem{defn}[thm]{Definition}
\begin{document}

\title{Meromorphic quadratic differentials with half-plane structures}
\author{Subhojoy Gupta}
\address{Center for Quantum Geometry of Moduli Spaces, Ny Munkegade 118, DK 8000 Aarhus C, Denmark.}
\email{sgupta@qgm.au.dk}
\date{Feb 17, 2012}

\begin{abstract}We prove the existence of  ``half-plane differentials"  with prescribed local data on any Riemann surface. These are meromorphic quadratic differentials with higher-order poles which have an associated singular flat metric isometric to a collection of euclidean half-planes glued by an interval-exchange map on their boundaries. The local data is associated with the poles and consists of the integer order, a non-negative real residue, and a positive real leading order term. This generalizes a result of Strebel for differentials with double-order poles, and associates metric spines with the Riemann surface. 
\end{abstract}

\maketitle

\tableofcontents

\section{Introduction}

A holomorphic quadratic differential on a Riemann surface defines a conformal metric that is flat with conical singularities, together with pair of \textit{horizontal} and \textit{vertical} measured foliations, and this singular-flat geometry is intimately related to quasiconformal mappings and the geometry of Teichm\"{u}ller space $\mathcal{T}_g$  in the Teichm\"{u}ller metric. In this paper we consider certain \textit{infinite area} singular flat surfaces corresponding to (non-integrable) meromorphic quadratic differentials that arise as geometric limits along Teichm\"{u}ller geodesic rays.\\

A \textit{half-plane surface} is a singular flat surface that is obtained by taking a finite partition of the boundaries of a collection of euclidean half-planes and gluing by an interval-exchange map (see \S4 for examples.) We shall only exclude the possibility that the two infinite-length boundary intervals of the same half-plane are identified. Such a surface can be thought of as a Riemann surface with punctures $p_1,p_2,\ldots p_n$ corresponding to the ends of the surface, equipped with a quadratic differential (restricting to $dz^2$ on the half-planes) that is holomorphic away from the punctures. This \textit{half-plane differential} has a pole of order $n_j\geq 4$  at  $p_j$ (for $1\leq j\leq n$) and one can associate with it the data of a non-negative real \textit{residue} $a_j$ (this is always zero when $n_j$ is odd) and, in a given choice of local coordinates, a positive real \textit{leading order term} $c_j$, which is the top coefficient in a local series expansion of the differential. In the singular flat metric, a neighborhood of $p_j$ is isometric to a ``planar end" comprising $n_j-2$ half-planes glued cyclically along their boundaries (see Definition \ref{defn:pend}). The residue $a_j$ then corresponds to the metric holonomy around the puncture and $c_j$ gives the ``scale" of this planar end relative to the others. We  provide precise definitions  in \S3.\\

In this article we show that one can get \textit{any} Riemann surface with $n$ punctures by the above construction, and furthermore can arbitrarily prescribe the data of order, residue and leading order terms:

\begin{center}\label{eq:data}
$\mathcal{D} = \{ (n_j, a_j,c_j)| n_j\in \mathbb{N}, n_j\geq 4, c_j\in \mathbb{R}^{+}, a_j\in \mathbb{R}_{\geq 0}, a_j=0$ for $n_j$ odd.$\}$
\end{center}
associated with the set of punctures:

\begin{thm}\label{thm:main} Let $\Sigma$ be a Riemann surface with a set $P$ of $n$ marked points and with a choice of local coordinates around each. Then for any data $\mathcal{D}$ as above there is a corresponding half-plane surface $\Sigma_\mathcal{D}$ and a conformal homeomorphism \begin{equation*}
g:\Sigma\setminus P \to \Sigma_\mathcal{D}
\end{equation*}
that is homotopic to the identity map.\\
(The only exception is for the Riemann sphere with one marked point with a pole of order $4$, in which case the residue must equal zero.)
\end{thm}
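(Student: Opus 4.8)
The plan is to recast the statement as the surjectivity of a natural ``forgetful'' map onto Teichm\"uller space, which in turn is established by a Strebel-type length--area argument for injectivity together with a properness (compactness) argument. Let $\mathcal T_{g,n}$ be the Teichm\"uller space of $\Sigma\setminus P$, and for a fixed ``combinatorial type'' $\tau$ (the discrete data describing how $H:=\sum_j(n_j-2)$ half-planes are partitioned, grouped into $n$ cyclic blocks of sizes $n_j-2$, and glued), let $\mathcal H_{\mathcal D}^{\tau}$ be the space of marked half-plane surfaces $(Y,f)$ of type $\tau$, with $f\colon\Sigma\setminus P\to Y$ an orientation-preserving homeomorphism up to isotopy, whose associated differential realizes the pole orders, residues $a_j$ and leading terms $c_j$ of $\mathcal D$ (in the coordinates carried over by $f$). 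The map $\Phi_\tau\colon\mathcal H_{\mathcal D}^{\tau}\to\mathcal T_{g,n}$, $(Y,f)\mapsto[(Y,f)]$, records the marked conformal structure, and a point of $\Phi_\tau^{-1}([\Sigma,P])$ is exactly a half-plane surface $\Sigma_{\mathcal D}$ with data $\mathcal D$ together with a conformal homeomorphism $g\colon\Sigma\setminus P\to\Sigma_{\mathcal D}$ homotopic to the identity. So it suffices to show that $\Phi_\tau$ is onto for at least one type $\tau$.

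The preliminary points are: (i) some $\mathcal H_{\mathcal D}^{\tau}$ is nonempty --- build a half-plane surface explicitly by gluing $H$ half-planes in cyclic blocks forming planar ends with the prescribed residues and scales (Definition~\ref{defn:pend}), choosing the interval-exchange combinatorics so the resulting punctured surface has genus $g$ (possible since $2-2g-n$ is realized by such ribbon-graph data); (ii) $\mathcal H_{\mathcal D}^{\tau}$ is a manifold, locally coordinatized by the lengths and offsets of the gluing, constrained linearly by $\mathcal D$, and a dimension count gives $\dim_{\mathbb R}\mathcal H_{\mathcal D}^{\tau}=6g-6+2n=\dim_{\mathbb R}\mathcal T_{g,n}$ whenever $2g-2+n>0$; (iii) the cases with $2g-2+n\le0$ --- the excepted sphere with one pole of order $4$, and the sphere with two higher-order poles --- are handled directly, since there $\mathcal T_{g,n}$ is a single point and (i) already finishes the job. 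In the $g=0$, $n=1$, $n_1=4$ case one notes with $p=\infty$ that the only admissible differentials are $c\,dz^2$, whose residue at $\infty$ vanishes identically, and $q=c_1\,dz^2$ (the plane $\mathbb C$ built from two half-planes glued along $\mathbb R$) realizes the data.

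Assume now $2g-2+n>0$. The map $\Phi_\tau$ is \emph{injective}: on a fixed Riemann surface there is at most one half-plane differential of type $\tau$ with data $\mathcal D$. I would prove this by a length--area argument in the style of Strebel's uniqueness theorem for Jenkins--Strebel differentials --- the metric $|q|$ of such a differential is extremal for the extremal-length problem of the curve system encircling the planar ends, normalized according to $\mathcal D$, and comparing two such extremal metrics on the same conformal surface via the minimal-norm inequality forces them to coincide. Granting also that $\Phi_\tau$ is \emph{proper} (below), we conclude: a continuous injection between manifolds of equal dimension is open (invariance of domain), a proper map into the locally compact Hausdorff space $\mathcal T_{g,n}$ has closed image, the image is nonempty by (i), and $\mathcal T_{g,n}$ is connected --- so $\Phi_\tau$ is a homeomorphism onto $\mathcal T_{g,n}$. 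In particular it hits $[\Sigma,P]$, proving the theorem.

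The hard part will be properness of $\Phi_\tau$, and it is here that the \emph{infinite area} of half-plane surfaces makes things harder than in Strebel's double-pole theorem, whose proof rests on length--area bounds for the cylinders around the poles. One must show that if $(Y_k,f_k)$ leaves every compact subset of $\mathcal H_{\mathcal D}^{\tau}$ then $[(Y_k,f_k)]$ leaves every compact subset of $\mathcal T_{g,n}$. Leaving a compact set means some flat gluing length degenerates to $0$ or to $\infty$. If a bounded gluing length tends to $0$, an appropriate simple closed curve acquires flat length $\to0$ with bounded transverse width, so its extremal length $\to0$ and the conformal structure degenerates. The subtle case is a length tending to $\infty$: on an infinite-area surface a diverging length need not pinch anything directly, so one truncates each half-plane at a large height $T$, works with the compact bordered flat surfaces $Y_k^{(T)}$, and shows --- with estimates uniform in $T$ --- that the divergence forces, inside the truncation, an embedded flat annulus (or expanding strip) of modulus $\to\infty$ around a curve separating the planar ends, whence again the conformal structure degenerates; one then lets $T\to\infty$. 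Developing these two-sided comparisons between flat lengths and extremal lengths, adapted to the non-compact planar ends rather than to cylinders, is the technical heart of the argument; the rest is bookkeeping and classical Teichm\"uller theory. (Only existence is asserted in the theorem; across the various combinatorial types $\tau$ a given surface may carry several half-plane differentials realizing the same local data.)
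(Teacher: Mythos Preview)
Your approach differs fundamentally from the paper's: the paper constructs the half-plane differential directly, building a sequence of half-plane surfaces $\Sigma_i'$ from a compact exhaustion of $\Sigma\setminus P$ via a two-step doubling procedure and the Jenkins--Strebel theorem on the resulting closed surfaces, then extracting simultaneously a conformal limit ($\Sigma\setminus P$) and a metric limit (the desired half-plane surface) by quasiconformal estimates. No uniqueness statement is needed or used anywhere.

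Your argument, by contrast, rests on the injectivity of $\Phi_\tau$, and the proposed proof of this step does not work. Strebel's length--area argument for order-$2$ poles succeeds because those poles sit at the ends of half-infinite cylinders: the peripheral curves have positive extremal length, $|q|$ has finite $L^1$-norm on the complement, and $|q|$ is the extremal metric for a nondegenerate moduli problem. For half-plane differentials none of this holds: $|q|$ has infinite area, and a loop encircling a planar end is homotopic into a puncture of $\Sigma\setminus P$ and hence has extremal length zero, so there is no extremal problem for $|q|$ to solve and no minimal-norm comparison to run. The paper explicitly records (\S13.2) that uniqueness of the half-plane differential with given local data fails in general and is only \emph{conjectured} when all poles have order $4$; the non-uniqueness examples there vary the combinatorial type, so they do not directly refute injectivity for fixed $\tau$, but no such result is available and your sketch does not supply one. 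There is also a definitional circularity: the leading term $c_j$ is measured in a fixed coordinate chart on $\Sigma$, and a topological marking $f\colon\Sigma\setminus P\to Y$ does not transport holomorphic coordinates, so the constraint ``leading term $=c_j$'' is not well-posed on $\mathcal H_{\mathcal D}^\tau$ until one already has the conformal identification that $\Phi_\tau$ is supposed to produce --- this undermines both the definition of the domain and the dimension count.
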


Note that it is not hard to show the existence of \textit{some} meromorphic quadratic differential which has local data given by $\mathcal{D}$ at the poles using the Riemann-Roch theorem, but half-plane differentials are a special subclass that satisfy the \textit{global} requirement of having a ``half-plane structure" as described.\\

This result can be thought of as a generalization of the following theorem of Strebel (\cite{Streb}):

 \begin{jthm}[Strebel]\label{thm:streb} Let $\Sigma$ be a Riemann surface of genus $g$, and $P = \{p_1,p_2,\ldots p_n\}$ be marked points on $\Sigma$ such that $2g-2+n>0$, and   $(a_1,a_2,\ldots,a_n)$ a tuple of positive reals. Then there exists a meromorphic quadratic differential $q$ on $\Sigma$ with poles at $\mathcal{P}$ of order $2$ and  residues $a_1,\ldots a_n$, such that all horizontal leaves (except the critical trajectories) are closed and foliate punctured disks around $\mathcal{P}$.
\end{jthm}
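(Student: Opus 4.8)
The strategy is the classical extremal-modulus (length-area) method for Jenkins--Strebel differentials, specialised to a system of peripheral curves and run with \emph{reduced} moduli so that the characteristic ring domains come out as punctured disks. Fix an auxiliary local coordinate $z_j$ at each $p_j$ with $z_j(p_j)=0$; since the residue of a double pole is a conformal invariant, this choice does not affect the resulting $q$. For a topological-disk neighbourhood $D\ni p_j$ in $\Sigma$, let $m(D,p_j)$ be its reduced modulus relative to $z_j$, normalised so that $m(\{|z_j|<r\},p_j)=\frac{1}{2\pi}\log r$. Consider the problem of maximising
\[
\Phi(D_1,\dots,D_n)=\sum_{j=1}^{n} a_j^2\, m(D_j,p_j)
\]
over all $n$-tuples of \emph{pairwise disjoint} disk neighbourhoods $D_j\ni p_j$ in $\Sigma$. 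A Gr\"otzsch-type inequality bounds $\Phi$ from above; the hypothesis $2g-2+n>0$ excludes precisely the cases --- the once- and twice-punctured sphere --- where this, or the existence of a maximiser, fails: for $(g,n)=(0,1)$ a disk can exhaust $\Sigma\setminus P=\mathbb{C}$, and for $(g,n)=(0,2)$ one disk can swell without bound at the other's expense unless $a_1=a_2$.

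First I would show $\Phi$ attains its supremum. Along a maximising sequence the reduced moduli are bounded above, and also below (a collapsing $D_j$ sends $m(D_j,p_j)\to-\infty$, incompatible with near-maximality); a Carath\'eodory kernel-convergence argument then produces a limiting configuration $(D_1^{*},\dots,D_n^{*})$ realising $\sup\Phi$, and the strict form of the Gr\"otzsch inequality gives its uniqueness.

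The heart of the proof is to show that this extremal configuration is cut out by a meromorphic quadratic differential. Computing the first variation of $\Phi$ under a Schiffer/Hadamard boundary variation of the $D_j^{*}$, stationarity forces the free boundary arcs composing $\partial D_j^{*}$ to be real-analytic and to assemble into the horizontal trajectory structure of a quadratic differential $q$ on $\Sigma$ that is holomorphic on $\Sigma\setminus P$, with each $D_j^{*}$ a ring domain swept by closed horizontal leaves freely homotopic to a loop around $p_j$. Moreover $\Sigma\setminus\bigcup_j\overline{D_j^{*}}$ has empty interior: an unused open region would abut some $\partial D_j^{*}$, and pushing $D_j^{*}$ into it increases the reduced modulus (hence $\Phi$), a contradiction; so the complement is a measure-zero critical graph, $q$ has no minimal components, and every non-critical leaf is a closed curve lying in one of the $D_j^{*}$. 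Finally each $D_j^{*}$ is conformally a punctured disk of infinite modulus with $p_j$ at its core, and the only local model of a holomorphic quadratic differential whose horizontal leaves form closed curves encircling an isolated point is a double pole with negative real leading coefficient; thus $q$ has a double pole at each $p_j$. Because the $j$-th ring domain carries the weight $a_j^2$ in $\Phi$, the residue of $q$ at $p_j$ comes out proportional to $a_j$ with a constant independent of $j$; replacing $q$ by a single positive multiple of itself normalises every residue to $a_j$. The resulting $q$ meets all the asserted conditions, and uniqueness of $q$ follows from uniqueness of the extremal configuration.

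The main obstacle I anticipate is precisely this variational step: establishing boundary regularity for the extremal domains and the trajectory-matching across their boundaries --- that the $n$ extremal domains are simultaneously the ring domains of one quadratic differential. The remaining ingredients --- boundedness and attainment of $\sup\Phi$, monotonicity of the reduced modulus under inclusion, and the local classification of isolated singularities whose leaves close up --- are comparatively routine.
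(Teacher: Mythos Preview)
The paper does not prove this statement. It is quoted in the introduction as a known theorem of Strebel (with a citation to \cite{Streb}) and used only as background and motivation for the main result; there is no argument for it anywhere in the text. So there is nothing in the paper to compare your proposal against.

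For what it is worth, your outline is the standard variational approach one finds in Strebel's book: maximise a weighted sum of reduced moduli of disjoint punctured-disk neighbourhoods, extract a limiting extremal configuration, and show via a boundary variation that the extremal domains are the characteristic ring domains of a single quadratic differential with double poles at the marked points. Your identification of the delicate step --- passing from extremality to the existence of a global quadratic differential whose trajectory structure cuts out the extremal domains --- is accurate; this is exactly where the work lies in Strebel's treatment. The remaining points (boundedness of $\Phi$ under $2g-2+n>0$, kernel convergence, the local classification of a double pole with closed circular trajectories, and the scaling to fix residues) are routine, as you say.
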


The corresponding singular flat metric for such a ``Strebel differential" with poles of order two comprises a collection of half-infinite cylinders glued by an interval-exchange on their boundaries, and has a metric spine (sometimes called the ``ribbon graph" or ``fat graph"). Strebel also showed that the quadratic differential $q$ above - and hence this metric spine - is unique, which yields useful combinatorial descriptions of Teichm\"{u}ller space (see, for example, \cite{HarZag}, \cite{Kont}). However, a  corresponding uniqueness statement for Theorem \ref{thm:main}  is not known, and conjecturally holds for poles of order $4$ (for a discussion, see \S13.2).\\

The proof of Theorem \ref{thm:main} uses the well-known result of Jenkins and Strebel that associates a holomorphic quadratic differential to a collection of curves on a \textit{compact} Riemann surface. The idea is to consider a compact exhaustion of $\Sigma\setminus P$ and produce a corresponding sequence of half-plane surfaces that shall have both $\Sigma\setminus P$ and $\Sigma_{\mathcal{D}}$ as its conformal limit. The main technical work is to obtain enough geometric control for this sequence,  and extract these limits by building appropriate sequences of quasiconformal maps.  We give a more detailed outline in \S5, and carry out the proof in \S6-12. In \S13 we conclude with some applications and further questions. In particular, the connection of half-plane surfaces with conformal limits of grafting and Teichm\"{u}ller rays will appear in a subsequent paper (\cite{Gup25}).\\

\medskip

\textbf{Acknowledgements.} 
This paper arose from work of the author as a graduate student at Yale, and he wishes to thank his thesis advisor Yair Minsky for his generous help. The author also wishes to thank Michael Wolf for numerous helpful discussions, Kingshook Biswas and Enrico Le Donne for useful conversations, and the support of the Danish National Research Foundation center of Excellence, Center for Quantum Geometry of Moduli Spaces (QGM) where this work was completed.

\section{Background}

In  this section we review some background on meromorphic quadratic differentials and the geometry they induce on a Riemann surface.\\

Most of this is standard (we refer to \cite{Streb}), but we point the reader to our notion of a  ``planar end" (Definition \ref{defn:pend}), which provides a metric model for an end of a half-plane surface, or more generally for the quadratic differential metric around higher-order poles.

\subsection{Quadratic differential space}

A closed genus-$g$ Riemann surface $\Sigma_g$ admits no non-constant holomorphic functions, but carries a finite-dimensional vector space of holomorphic one-forms, or more general holomorphic differentials, which are holomorphic sections of powers of the canonical line-bundle $K_\Sigma$.\\

A \textit{quadratic differential} $q$ on $\Sigma_g$ is a section of $K_\Sigma \otimes K_\Sigma$, a differential of type $(2,0)$ locally of the form $q(z)dz^2$. It is said to be \textit{holomorphic} (or \textit{meromorphic}) when $q(z)$ is holomorphic (or meromorphic).\\

A \textit{zero} or a \textit{pole} of a holomorphic quadratic differential is a point $p$ where in a local chart sending $p$ to $0$ we have $\phi(z) =z^n\psi(z)$ or $\phi(z) = \frac{1}{z^n}\psi(z)$ respectively, where $\psi(z)\neq 0$ and the integer $n\geq 1$ is the \textit{order} of the zero or pole. The following is a well-known fact:

\begin{lem}\label{lem:gb} If there are $M$ zeroes of orders $n_1,n_2,\ldots n_M$, and $N$ poles of orders  $k_1,k_2,\ldots k_N$, then
$\sum\limits_{i=1}^M n_i - \sum\limits_{i=1}^N k_i  = 4g-4$.
\end{lem}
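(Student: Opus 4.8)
The plan is to argue at the level of divisors of line bundles. A meromorphic quadratic differential $q$ is precisely a nonzero meromorphic section of $K_\Sigma^{\otimes 2}$, and its divisor is $\operatorname{div}(q) = \sum_{i=1}^M n_i[z_i] - \sum_{j=1}^N k_j[p_j]$ by the very definition of the order of a zero or a pole given above. Since the degree of the divisor of any meromorphic section of a line bundle on a compact Riemann surface equals the degree of that bundle, and $\deg K_\Sigma = 2g-2$, we get $\deg\operatorname{div}(q) = \deg K_\Sigma^{\otimes 2} = 2(2g-2) = 4g-4$, which is exactly the claimed identity. The body of a self-contained proof is then just to justify these two inputs.

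First I would record that $\deg\operatorname{div}$ is an invariant of the underlying bundle rather than of the chosen section: given two nonzero meromorphic quadratic differentials $q_1,q_2$, the ratio $q_1/q_2$ is a globally defined meromorphic function on $\Sigma_g$, and a nonconstant meromorphic function $\Sigma_g\to\mathbb{P}^1$ has as many zeros as poles counted with multiplicity (the argument principle, i.e. the fact that a branched cover of $\mathbb{P}^1$ has well-defined degree), so $\deg\operatorname{div}(q_1/q_2)=0$ and hence $\deg\operatorname{div}(q_1)=\deg\operatorname{div}(q_2)$. The same applies to abelian differentials. Thus it suffices to evaluate the degree on one explicit example.

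Second I would compute $\deg\operatorname{div}(\omega)$ for a single abelian differential via Riemann--Hurwitz. Choose a nonconstant meromorphic function $f$ on $\Sigma_g$ (which exists on any compact Riemann surface), regarded as a branched cover $f:\Sigma_g\to\mathbb{P}^1$ of degree $d$, and set $\omega = df$. At a point $p$ over a finite value with ramification index $e_p$ one has $f = u^{e_p}\cdot(\mathrm{unit})$ in a local coordinate, so $df$ has a zero of order $e_p-1$; at a pole of $f$ of order $m_p$ (so $e_p=m_p$ over $\infty$) one has $f=u^{-m_p}\cdot(\mathrm{unit})$, so $df$ has a pole of order $m_p+1$. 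Summing these contributions and using $\sum_{f(p)=\infty}m_p = d$, one finds $\deg\operatorname{div}(df) = R - 2d$, where $R=\sum_p(e_p-1)$ is the total ramification; Riemann--Hurwitz, $2g-2 = -2d + R$, then yields $\deg\operatorname{div}(df)=2g-2$. Finally, for any meromorphic quadratic differential $q$ the function $q/(df)^2$ is meromorphic, so $\deg\operatorname{div}(q) = 2\deg\operatorname{div}(df) = 4g-4$.

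I do not expect a genuine obstacle here; this is classical. The only places calling for a little care are the bookkeeping of the contributions over $\infty$ (the poles of $f$ and the ramification there) in the Riemann--Hurwitz count, and the invocation of the existence of a nonconstant meromorphic function on a compact Riemann surface — itself a nontrivial theorem (a consequence of Riemann--Roch, or of the existence of abelian differentials with prescribed singularities) which I would simply cite. Alternatively one can skip the explicit cover altogether and just quote $\deg K_\Sigma = 2g-2$ together with the multiplicativity of degree under tensor powers.
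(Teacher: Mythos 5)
The paper does not actually give a proof of this lemma; it simply records it as ``a well-known fact.'' So there is nothing in the text to compare against line by line. Your argument is correct and complete. The identification of a meromorphic quadratic differential with a meromorphic section of $K_\Sigma^{\otimes 2}$, the observation that two nonzero sections differ by a meromorphic function (whose divisor has degree zero since on a compact Riemann surface the number of zeros equals the number of poles), and the explicit Riemann--Hurwitz computation of $\deg\operatorname{div}(df)=2g-2$ all go through. The bookkeeping over $\infty$ is handled correctly: a pole of $f$ of order $m_p$ gives a pole of $df$ of order $m_p+1$, and writing $-(m_p+1)=(e_p-1)-2e_p$ with $e_p=m_p$ recovers $\deg\operatorname{div}(df)=R-2d$, which combined with $2g-2=-2d+R$ gives the result. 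Your closing remark is also apt: the only genuinely nontrivial ingredients are the existence of a nonconstant meromorphic function (or, equivalently, $\deg K_\Sigma=2g-2$), which the paper would also be implicitly relying on by calling the lemma ``well-known.'' A shorter route, which you flag, is simply to cite $\deg K_\Sigma=2g-2$ and multiplicativity of degree under tensor powers; either version is acceptable.
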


Let $Q_{g,k_1,k_2,\ldots k_N}$ be the space of meromorphic quadratic differentials on $\Sigma_g$ with poles of order less than or equal to $k_1,k_2,\ldots k_N$ at a collection of $N$ marked points, and $\widehat{Q}_{g,k_1,k_2,\ldots k_N}$ be the subset of such differentials with poles of orders \textit{exactly} $k_1,k_2,\ldots k_N$.
The former is a finite dimensional vector space, whose dimension can be computed using the Riemann-Roch theorem.\\



In fact the vector space $Q(X)$ of holomorphic quadratic differentials on a Riemann surface $X$ has complex dimension exactly $3g-3$ (see for example \cite{FarKra}).

\subsection{Quadratic differential metrics}

A holomorphic quadratic differential $q\in Q(X)$ defines a conformal metric (also called the $q$-metric) given in local coordinates by $\lvert q(z)\rvert \lvert dz\rvert^2$ which has Gaussian curvature zero wherever $q(z)\neq 0$.\\

At the points where $q(z)=0$ (finitely many by Lemma \ref{lem:gb}) there is a conical singularity of angle $(n+2)\pi$ where $n$ is the order of the zero, and locally the singular flat metric looks like a collection of $n$ rectangles glued around the singularity (see \S7 of \cite{Streb}).\\

\begin{figure}[h]\label{fig:zero}
  \centering
  \includegraphics[scale=0.85]{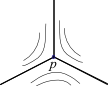}\\
  \caption{A simple zero $p$ is a $3\pi$ cone-point. }
\end{figure}

One way to see this is to change to coordinates where $q = d\zeta^2$ by the conformal map
\begin{equation}\label{eq:choc}
z\mapsto \displaystyle\int\limits_p^z \sqrt{q(z)}dz
\end{equation}
which gives a branched covering of the $\zeta$-plane when $p$ is a zero of $q$.

\subsubsection*{Lengths, area} The \textit{horizontal length} in the $q$-metric of an arc $\gamma$ is defined to be
\begin{equation*}
\lvert \gamma\rvert_h = Re \displaystyle\int\limits_{\gamma} \sqrt{q(z)} d\vert z\rvert
\end{equation*}
and the \textit{vertical length} is the corresponding imaginary part. The total length in the $q$-metric is $\left(\lvert \gamma\rvert_h^2 + \lvert \gamma\rvert_v^2\right)^{1/2}$.\\ 

The $L^1$-norm of a quadratic differential gives the \textit{area} in the $q$-metric:
\begin{equation}\label{eq:area}
Area_q (X) =  \displaystyle\int\limits_X \lvert \phi(z)\rvert dzd\bar{z}
\end{equation}

\subsection{Measured foliations}
A holomorphic quadratic differential $q\in Q(X)$ determines a \textit{horizontal foliation} on $X$ which we denote by $\mathcal{F}_h(q)$, obtained by integrating the line field of vectors $\pm v$ where the quadratic differential is real and positive, that is $q(v,v)\geq 0$. Similarly, there is a \textit{vertical foliation} $\mathcal{F}_v(q)$ consisting of integral curves of directions where $q$ is real and negative.\\

These foliations can be thought of as the pullback by the map (\ref{eq:choc}) of the horizontal and vertical lines in the $\zeta$-plane. These foliations are also \textit{measured}: the measure of an arc transverse to $\mathcal{F}_h$ is given by its \textit{vertical} length, and the transverse measure for $\mathcal{F}_v$ is given by horizontal lengths. Such a measure is invariant by isotopy of the arc if it remains transverse with endpoints on leaves.\\

Let $\mathcal{MF}$ be the space of singular foliations on a surface equipped with a transverse measure, upto isotopy and Whitehead equivalence.

\begin{jthm}[Hubbard-Masur \cite{HM}]\label{thm:HM} Fix a Riemann surface $X$. Then any $\mathcal{F}\in \mathcal{MF}$ is the horizontal foliation of a unique holomorphic quadratic differential on $X$.
\end{jthm}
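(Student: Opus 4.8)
\emph{Proof strategy.} The plan is to study the \emph{horizontal foliation map}
\[
\Phi\colon Q(X)\longrightarrow \mathcal{MF},\qquad q\longmapsto \mathcal{F}_h(q),
\]
and prove it is a homeomorphism: existence of $q$ is its surjectivity, uniqueness is its injectivity. Both spaces are homeomorphic to $\mathbb{R}^{6g-6}$ (assume $g\geq 2$; the genus one and punctured cases need only cosmetic changes) --- the source because $\dim_{\mathbb{C}}Q(X)=3g-3$ by Riemann--Roch, and the target by Thurston's train-track (or Dehn--Thurston) parametrization of $\mathcal{MF}$, which in addition shows $\mathcal{MF}$ is connected and carries the weak topology induced by the intersection pairing $\mathcal{F}\mapsto\bigl(i(\gamma,\mathcal{F})\bigr)_{\gamma}$ over isotopy classes of simple closed curves $\gamma$. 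It then suffices to check that $\Phi$ is (i) continuous, (ii) proper, and (iii) injective: (i) and (iii) make $\Phi$ an open map by Brouwer's invariance of domain, (ii) makes its image closed, and connectedness of $\mathcal{MF}$ forces the image to be everything, so $\Phi$ is a continuous open bijection, hence a homeomorphism.

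For (i): for $q\in Q(X)$ and a simple closed curve $\gamma$ one has $i(\gamma,\mathcal{F}_h(q))=\inf_{\gamma'\in[\gamma]}\int_{\gamma'}\lvert Im\,\sqrt{q}\,dz\rvert$, the $q$-height of $[\gamma]$, which depends continuously on $q$; since the topology of $\mathcal{MF}$ is the one induced by these quantities, $\Phi$ is continuous. For (ii): write $q=t\,q'$ with $t=\lVert q\rVert$ the $L^1$-norm and $\lVert q'\rVert=1$; heights scale like $\sqrt{t}$, so $i(\gamma,\mathcal{F}_h(q))=\sqrt{t}\;i(\gamma,\mathcal{F}_h(q'))$. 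If $\lVert q_n\rVert\to\infty$, pass to a subsequence with $q'_n\to q'_0$ in the (compact) unit sphere of $Q(X)$; then $q'_0\neq 0$, so $\mathcal{F}_h(q'_0)\neq 0$ in $\mathcal{MF}$ (a nonzero holomorphic quadratic differential has a nontrivial horizontal measured foliation), whence $i(\gamma_0,\mathcal{F}_h(q'_0))>0$ for some $\gamma_0$ and $i(\gamma_0,\mathcal{F}_h(q_n))\to\infty$. Thus $\mathcal{F}_h(q_n)$ leaves every compact set, and $\Phi$ is proper.

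The heart of the matter is (iii), which I would obtain from an extremal-length characterization of $q$ in terms of $\mathcal{F}_h(q)$. For $\mathcal{F}\in\mathcal{MF}$ and a conformal metric $\rho=\rho(z)\lvert dz\rvert$, define $L_\rho(\mathcal{F})$ by approximating $\mathcal{F}$ by weighted simple closed multicurves $c_n\to\mathcal{F}$ and setting $L_\rho(\mathcal{F})=\lim_n\ell_\rho(c_n)$ (one must verify this is well defined), and put $\mathrm{Ext}_X(\mathcal{F})=\sup_\rho L_\rho(\mathcal{F})^2/Area_\rho(X)$. The key lemma is that for every $q\in Q(X)$ one has $\mathrm{Ext}_X(\mathcal{F}_h(q))=\lVert q\rVert$, with the supremum attained \emph{only} by the flat metric $\lvert q(z)\rvert\,\lvert dz\rvert^{2}$: the lower bound and the equality case are a length--area (Cauchy--Schwarz) computation in the natural coordinate $\zeta$ with $q=d\zeta^2$, using that almost every horizontal leaf is closed or dense, while the reverse inequality compares an arbitrary competitor $\rho$ against $\mathcal{F}_h(q)$ leaf by leaf. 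Granting this, if $\mathcal{F}_h(q_1)=\mathcal{F}_h(q_2)=\mathcal{F}$ then $\lVert q_1\rVert=\mathrm{Ext}_X(\mathcal{F})=\lVert q_2\rVert$ and both flat metrics $\lvert q_i\rvert\lvert dz\rvert^2$ realize the same extremal problem; uniqueness of the extremal metric gives $\lvert q_1\rvert\equiv\lvert q_2\rvert$, so $q_2/q_1$ is a meromorphic function on $X$ of constant modulus, hence a unimodular constant, and comparing horizontal directions (which would otherwise rotate by a fixed nonzero angle) forces that constant to be $1$, i.e.\ $q_1=q_2$.

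The step I expect to be the main obstacle is precisely this extremal-length lemma: making $L_\rho(\mathcal{F})$ rigorous for a genuinely non-multicurve measured foliation, and establishing uniqueness of the extremal metric. This is where essentially all the analysis is concentrated --- it is the content of the Gardiner--Kerckhoff treatment of extremal length on $\mathcal{MF}$ --- and an alternative (the original route of Hubbard and Masur) trades it for a deformation-theoretic study of the period map over the universal curve over $\mathcal{T}_g$, combined with a properness/degree argument in place of (ii)--(iii).
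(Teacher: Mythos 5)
This is a theorem quoted from Hubbard and Masur's paper; the present paper offers no proof of it, so there is no ``paper's proof'' against which to check your attempt. Your sketch outlines one of the standard routes to the theorem (closer in spirit to the later Gardiner--Masur / Gardiner--Kerckhoff extremal-length treatment than to Hubbard and Masur's original deformation-theoretic argument), and the overall architecture --- continuity, properness, injectivity, invariance of domain, connectedness of $\mathcal{MF}$ --- is sound. The scaling argument for properness and the final step of injectivity (once $\lvert q_1\rvert\equiv\lvert q_2\rvert$ is granted, the ratio $q_2/q_1$ is a unimodular constant, and comparing horizontal directions pointwise forces it to be $1$) are both fine.

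That said, the proposal has one genuine gap, which you yourself flag: the extremal-length lemma $\mathrm{Ext}_X(\mathcal{F}_h(q))=\lVert q\rVert$, with the flat metric $\lvert q\rvert\,\lvert dz\rvert^2$ as the \emph{unique} extremizer. Without a proof of this --- in particular without a careful definition of $L_\rho(\mathcal{F})$ for arbitrary $\mathcal{F}\in\mathcal{MF}$ (continuity of the length functional under approximation by weighted multicurves is not automatic and is itself a nontrivial result), and without the uniqueness of the extremal metric --- the injectivity step is not established, and injectivity is the crux of the theorem. Two smaller points: the ``cosmetic changes'' for genus one and the punctured case are not entirely cosmetic (one must handle the $\mathbb{Z}/2$ action $q\mapsto -q$ in genus one and the finite-area condition at punctures), and the assertion that the $q$-height $i(\gamma,\mathcal{F}_h(q))$ is continuous in $q$ deserves a reference or a short argument since the infimum is over an infinite family of representatives. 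As a sketch of the proof strategy the proposal is credible; as a proof it is incomplete precisely where the real work lies.
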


Theorem \ref{thm:main} of this paper can be thought of as a step towards a generalization to a \textit{non-compact} version of the above result. In this paper we shall use a special case independently proved in \cite{JS1} and \cite{JS2} (See \cite{Wolf1} for a proof using harmonic maps to metric graphs.)

\begin{thm}[Jenkins-Strebel]\label{thm:js}
Let $\gamma_1,\gamma_2,\ldots \gamma_n$ be disjoint homotopy classes of curves on a Riemann surface $\Sigma$. Then there exists a holomorphic quadratic differential $q$ whose horizontal foliation $\mathcal{F}_h(q)$ consists of closed leaves foliating $n$ cylinders with core curves in those homotopy classes. Moreover, one can prescribe the heights, or equivalently the circumferences, of the $n$ metric cylinders to be any $n$-tuple $(c_1,c_2,\ldots,c_n)$ of positive reals.
\end{thm}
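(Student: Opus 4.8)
The plan is to derive the statement from the Hubbard--Masur theorem (Theorem~\ref{thm:HM}), which is already at our disposal. After an isotopy we may assume $\gamma_1,\dots,\gamma_n$ are pairwise disjoint essential simple closed curves on $\Sigma$, no two freely homotopic. First I would encode the data as the measured foliation given by the weighted multicurve $\mathcal{F}:=\sum_{i=1}^{n}c_i\gamma_i\in\mathcal{MF}$: take a regular neighbourhood of $\bigcup_i\gamma_i$, foliate it by circles isotopic to the $\gamma_i$, assign to an arc crossing $\gamma_i$ once the mass $c_i$, and extend by the (measure-zero) foliation on the complement. Applying Hubbard--Masur to $\mathcal{F}$ then yields a holomorphic quadratic differential $q$ on $\Sigma$ with $\mathcal{F}_h(q)=\mathcal{F}$; by the convention that transverse measure equals vertical length, the transverse width of the foliated annulus around $\gamma_i$ is $c_i$.

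The substantive step is to check that this $q$ really has the Jenkins--Strebel structure, namely that $\mathcal{F}_h(q)$ decomposes into $n$ cylinders of closed horizontal trajectories with cores isotopic to the $\gamma_i$ (plus a critical graph of measure zero), and that the height of the $i$-th cylinder is the prescribed $c_i$. Here one uses that a measured foliation equivalent to a multicurve has no minimal (recurrent) components, so the structure theory of measured foliations forces every non-critical leaf of $\mathcal{F}_h(q)$ to be closed; a maximal family of mutually isotopic closed leaves sweeps out an embedded flat cylinder, and the prescribed transverse measures force exactly one such cylinder $C_i$ in each class $[\gamma_i]$ and none in any other class (an extra cylinder would contribute an extra summand to $\mathcal{F}$). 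Since the transverse measure across $C_i$ equals its vertical extent, the height of $C_i$ is $c_i$. This matching of the abstract Hubbard--Masur output to the concrete flat geometry --- in particular pinning down the standard representative within the Whitehead-equivalence class of $\mathcal{F}_h(q)$ --- is where I expect the main difficulty to lie.

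Finally, for the circumference formulation: fixing $\Sigma$ and the multicurve, the construction above defines a correspondence between the vector of prescribed heights and the vector of resulting circumferences, and the uniqueness and continuity part of the Jenkins--Strebel theory shows this correspondence is a homeomorphism of $(\mathbb{R}^{+})^{n}$ onto itself (a height tending to $0$ or $\infty$ forces the same for the corresponding circumference, giving properness, which one combines with invariance of domain), so the circumferences may be prescribed instead. Alternatively, one can sidestep Hubbard--Masur and run the classical length--area argument of Jenkins and Strebel directly: minimize $\int_\Sigma\rho^2\,|dz|^2$ over conformal metrics $\rho\,|dz|$ subject to $\ell_\rho(\gamma)\geq c_i$ for every $\gamma\simeq\gamma_i$, and identify the extremal metric with $|q(z)|\,|dz|^2$ for a Jenkins--Strebel $q$ whose cylinders have circumference exactly $c_i$; there the obstacle instead shifts to compactness of minimizing sequences and regularity of the extremal metric. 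Either way, the crux is the passage from the soft existence of a foliation or a metric to the rigid flat-cylinder decomposition.
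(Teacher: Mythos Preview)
The paper does not prove this theorem: it is stated as a known result, with citations to the original papers \cite{JS1}, \cite{JS2} and to \cite{Wolf1} for a proof via harmonic maps to metric graphs. So there is no proof in the paper to compare against.

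Your derivation from Hubbard--Masur is a legitimate route, and indeed the paper itself presents Theorem~\ref{thm:js} as a special case of the Hubbard--Masur theorem. The heights version follows cleanly as you describe: realising $\sum c_i\gamma_i$ as a horizontal foliation forces the non-critical leaves to be closed (no minimal component in that Whitehead class), and the transverse measure gives the heights. One caution: your argument for the circumference formulation leans on ``the uniqueness and continuity part of the Jenkins--Strebel theory'' to get the heights-to-circumferences map to be a proper self-map of $(\mathbb{R}^+)^n$, which is close to circular if you are deriving Jenkins--Strebel from scratch; the uniqueness you can legitimately borrow from Hubbard--Masur, but the properness claim (that $c_i\to 0$ or $\infty$ forces the corresponding circumference to do the same) needs an independent justification, e.g.\ via extremal-length estimates. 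Your alternative of running the classical extremal-metric variational argument directly is exactly what \cite{JS1}, \cite{JS2} do, and is self-contained for the circumference version.
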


\begin{figure}[h]
  \centering
  \includegraphics[scale=0.55]{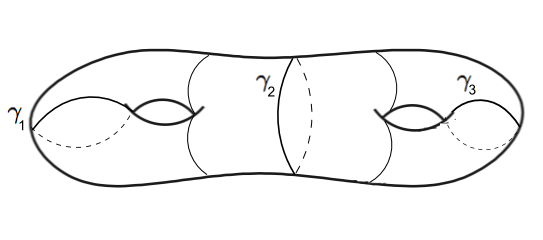}\\
  \caption{In Theorem \ref{thm:js} the surface in the $q$-metric is composed of metric cylinders with core curves the given homotopy classes of curves. }
\end{figure}

\subsection*{Critical segments} A \textit{critical segment} is a horizontal arc between two critical points (zeroes or poles) of the quadratic differential, and a critical arc between two zeroes is called a \textit{saddle-connection}. Their number is always finite since by the theorem of Gauss-Bonnet (see also Theorem 14.2.1 in \cite{Streb}) there cannot be more than one saddle-connection between a pair of zeroes.

\subsection{Metric structure at poles}

Most of the preceding discussion also holds when the quadratic differential $q$ is \textit{meromorphic}, with finitely many poles: namely, one has an associated singular flat $q$-metric, together with horizontal and vertical foliations. \\

A meromorphic quadratic differential with a finite $q$-area can have poles of order at most $1$ (see \cite{Streb}). At such a pole, there is a conical singularity of angle $\pi$, and the singular flat metric has a ``fold" (see figure, also \S7 of \cite{Streb}).\\

A pole of higher order $n>1$ is at an infinite distance in the $q$-metric. Any such pole $p$ has an associated \textit{analytic residue} which is defined to be 
\begin{equation}\label{eq:ares}
Res_q(p) = \displaystyle\int\limits_\gamma \sqrt{q}
\end{equation}
where $\gamma$ is any simple closed curve homotopic into $p$ (this is independent of choice of $\gamma$ since $q$ is holomorphic away from $p$).\\

\begin{figure}\label{fig:poles}
  \centering
  \includegraphics[scale=0.85]{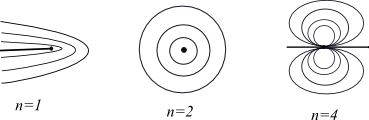}\\
  \caption{Local picture with horizontal leaves at a pole of order $n$.}
  \end{figure}
  
For the rest of this article, we shall consider only higher order poles which have a \textit{real} analytic residue (in \cite{Streb} this property is referred to as $q$ having a \textit{vanishing logarithmic term}). This rules out any ``spiralling" behaviour of the horizontal foliation at the poles. We also have an explicit description of the singular flat metric around the pole $p$ (Theorems \ref{thm:order2} and \ref{thm:streb}) which can be culled from \cite{Streb} (see \S7 of the book for a discussion).

\begin{thm}\label{thm:order2}
Let $p$ on $\Sigma$ be a pole of order $2$ with a positive real analytic residue $C$. Then there is a neighborhood $U$ of $p$ such that in the $q$-metric  $U\setminus p$ is isometric to a half-infinite euclidean cylinder with circumference $2\pi C$.
\end{thm}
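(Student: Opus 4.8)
The plan is to produce a holomorphic local coordinate $w$ near $p$ in which $q$ is \emph{exactly} the model double-pole differential $-C^2\, dw^2/w^2$, and then to check by an explicit change of variable that the $q$-metric of a punctured disk around such a pole is a half-infinite flat cylinder of circumference $2\pi C$.

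First I would work in the given local coordinate $z$ centered at $p$ and write $q = \psi(z)\, z^{-2}\, dz^2$ with $\psi$ holomorphic near $0$ and $\psi(0)\neq 0$. Since $\psi(0)\neq 0$, a branch of $\sqrt{\psi}$ is defined near $0$, so $\sqrt{q} = \bigl(\sqrt{\psi(0)} + O(z)\bigr)\, z^{-1}\,dz$ and the residue theorem applied to the definition (\ref{eq:ares}) gives $Res_q(p) = 2\pi i\,\sqrt{\psi(0)}$. The standing hypothesis that this residue is real forces $\psi(0)\in\mathbb{R}_{<0}$; writing $\psi(0) = -C^2$ with $C>0$ identifies the residue with $C$ (up to the fixed normalization $\lvert Res_q(p)\rvert = 2\pi C$) and pins down the branch by $\sqrt{\psi(0)} = iC$.

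Next I would pass to the distinguished parameter $\zeta = \int^z\sqrt{q}$. The expansion above shows that $\sqrt{q} - iC\,d(\log z)$ extends to a holomorphic $1$-form across $p$, so $\zeta = iC\log z + h(z)$ for a function $h$ holomorphic near $0$, and the additive constant may be absorbed so that $h(0)=0$. I then set $w := \exp(-i\zeta/C) = z\exp(-ih(z)/C)$: the factor $\exp(-ih(z)/C)$ is holomorphic and equal to $1$ at $0$, so $w$ is holomorphic with a simple zero at $p$ and hence a genuine local coordinate on a smaller neighborhood; it is single-valued because the monodromy $\zeta\mapsto\zeta-2\pi C$ around $p$ multiplies $w$ by $e^{2\pi i}=1$. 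Since $\zeta = iC\log w$ by construction, we get $q = d\zeta^2 = (iC\,dw/w)^2 = -C^2\,dw^2/w^2$. This normalization step is the crux of the argument: the point requiring care is exactly that $h$ is holomorphic at $p$ (equivalently, that $q$ has no logarithmic term, which is our reality hypothesis) and that the resulting $w$ is both single-valued and a nondegenerate coordinate.

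Finally I would choose $\epsilon>0$ small enough that $w$ maps a neighborhood $U$ of $p$ biholomorphically onto $\{|w|<\epsilon\}$. On $U\setminus p$ the $q$-metric is $C^2|w|^{-2}|dw|^2 = \lvert d(C\log w)\rvert^2$, so $w\mapsto s:=C\log w$ identifies $U\setminus p$ (with its $q$-metric) with the quotient of the half-plane $\{Re(s) < C\log\epsilon\}$ (with the euclidean metric $\lvert ds\rvert^2$) by the deck translation $s\mapsto s+2\pi iC$ coming from $\arg w\mapsto\arg w+2\pi$. That quotient is precisely a half-infinite euclidean cylinder of circumference $2\pi C$, which is the assertion. (The statement is essentially the order-$2$ case of the discussion in \S7 of \cite{Streb}; the argument above simply isolates it.)
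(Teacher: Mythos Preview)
Your proof is correct and follows the same approach as the paper's: normalize $q$ to $-C^2\,dw^2/w^2$ in a suitable local coordinate, then use the logarithm to exhibit the half-infinite cylinder. The paper's own argument is terser---it asserts the normal form and leaves the verifications to the reader---so your construction of the coordinate $w=\exp(-i\zeta/C)$ from the distinguished parameter $\zeta$ supplies precisely the details the paper omits.
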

\begin{proof}
A neighbourhood $U$ of $p$ has a conformal chart  to $\mathbb{D}$ taking $p$ to $0$ the quadratic differential takes the form
\begin{equation*}
-\frac{C^2}{z^2} dz^2
\end{equation*}
The change of coordinates (\ref{eq:choc}) is given by a logarithm map $\zeta = iC\ln z$ that pulls back the euclidean $\zeta-$plane to a half-infinite cylinder. We leave these verifications to the reader. \end{proof}

\subsection{Planar ends}

We now introduce the notion of a \textit{planar end} and its \textit{metric residue}. \\
In the following definition we identify the boundary of a euclidean half-plane with $\mathbb{R}$.

\begin{defn}[Planar-end]\label{defn:pend}
Let $\{H_i\}$ for $1\leq i\leq n$ be a cyclically ordered collection of half-planes with rectangular ``notches" obtained by deleting, from each, a rectangle of horizontal and vertical sides adjoining the boundary, with the boundary segment having end-points $x_i$ and $y_i$, where $x_i<y_i$. A \textit{planar end} is obtained by gluing the interval $[y_i,\infty)$ on $\partial H_i$ with $(-\infty, x_{i+1}]$ on $H_{i+1}$ by an orientation-reversing isometry. Such a surface is homeomorphic to a punctured disk.
\end{defn}

For an example, see Figure 4.

\begin{defn}[Metric residue]\label{defn:metres}
Let the  half-plane differential $q$ at a pole $p_j$ have a planar end as above (where the number of half-planes $n$ equals $ n_j -2$). Then the residue $a_j$ of  $q$ at $p_j$  is defined to be the alternating sum $\sum\limits_{i=1}^n (-1)^{i+1}(y_i - x_i)$. Note that there is an ambiguity of sign because of the cyclic ordering in the alternating sum, and we resolve this by choosing a starting index that ensures a positive sum.
\end{defn}

\begin{figure}[h]
  \centering
  \includegraphics[scale=0.45]{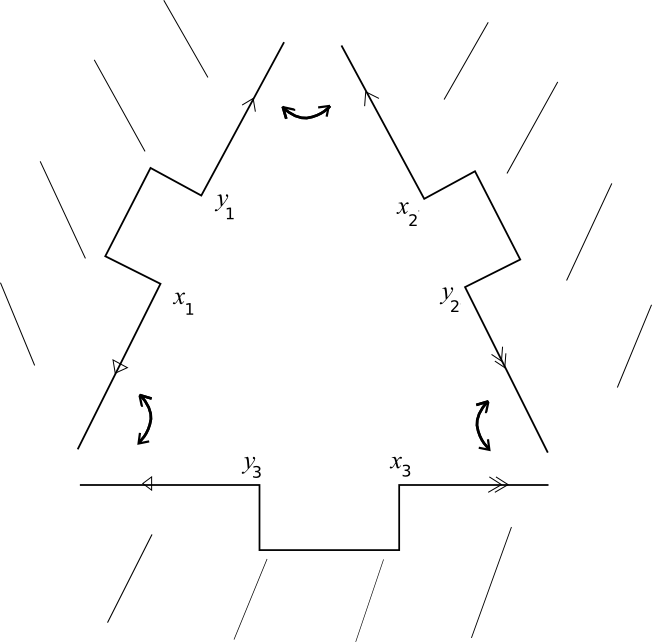}\\
  \caption{A planar-end obtained from $3$ ``notched" half-planes (see Definition \ref{defn:pend}). }
  \end{figure}

\begin{thm}\label{thm:streb}
Let $p$ on $\Sigma$ be a pole of order $n>2$ with a real analytic residue $C$. Then there is a neighborhood $U$ of $p$ such that in the $q$-metric $U\setminus p$ is isometric to a planar-end surface with $(n-2)$ half-planes and metric residue equal to $C$.
\end{thm}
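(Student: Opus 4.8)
The plan is to imitate the proof of Theorem~\ref{thm:order2}: pass to the natural coordinate $\zeta=\int\sqrt{q}$, in which $q=d\zeta^{2}$, describe the (multivalued) map $z\mapsto\zeta$ on a punctured neighbourhood of $p$, and read off the flat geometry directly.

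First I would reduce $q$ to a local normal form about $p$. Choosing a holomorphic coordinate $z$ with $z(p)=0$ and invoking the normal form for higher-order poles with vanishing logarithmic term (see \cite{Streb}), one may assume $q=\bigl(z^{-m}+b\,z^{-1}\bigr)^{2}dz^{2}$ when $n=2m$ is even --- reality of the residue forces $b\in i\mathbb{R}$, and $b=0$ when the residue vanishes --- and $q=z^{-n}\,dz^{2}$ when $n$ is odd. Integrating (and using $m\ge 2$), for $n=2m$ one gets $\zeta(z)=-\tfrac{1}{m-1}z^{-(m-1)}+b\log z$ up to an additive constant, while for $n$ odd, setting $w^{2}=z$ on the connected double cover of a punctured disc about $p$, one gets $\zeta=-\tfrac{2}{n-2}\,w^{-(n-2)}$. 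In every case $\zeta\to\infty$ as $z\to p$, and the monodromy of $\zeta$ around $p$ follows at once: for $n$ even, $z^{-(m-1)}$ is single-valued while $\log z$ gains $2\pi i$, so $\zeta$ is translated by the residue $C$, a horizontal translation; for $n$ odd the deck transformation $w\mapsto -w$ sends $\zeta\mapsto-\zeta$, a rotation by $\pi$ with no translational part, consistent with $C=0$.

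Next I would extract the half-plane decomposition. Take $U$ a small disc about $p$ containing no zeros of $q$ and cut $U\setminus p$ along the preimage under $\zeta$ of the real axis (in the odd case, do this on the double cover and then descend to the $\mathbb{Z}/2$-quotient). By the standard picture of the horizontal foliation at a pole of order $n$ (Figure~\ref{fig:poles} and \S7 of \cite{Streb}), this preimage is a union of $n-2$ arcs running into $p$ that divide $U\setminus p$ into $n-2$ cyclically arranged sectors, each carried isometrically by $\zeta$ onto a Euclidean half-plane with a neighbourhood of $p$ removed. Choosing $\partial U$ so that it is, in the $\zeta$-coordinate of each sector, a concatenation of horizontal and vertical segments, the removed region in each sector becomes a boundary rectangle; thus each sector is precisely a notched half-plane as in Definition~\ref{defn:pend}, and the cuts become the orientation-reversing identifications of $[y_{i},\infty)$ with $(-\infty,x_{i+1}]$. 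For $n\ge 4$ there are at least two sectors, so no half-plane is glued to itself; for $n=3$ one obtains the single folded half-plane with a degenerate notch. Finally, composing the $n-2$ gluing isometries once around the cycle must reproduce the monodromy of $\zeta$: writing this out, the alternating sum $\sum_{i=1}^{n-2}(-1)^{i+1}(y_{i}-x_{i})$ telescopes to the translation length, giving metric residue $C$ for $n$ even, while the relation $\zeta\mapsto-\zeta$ forces this alternating sum to vanish for $n$ odd.

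I expect the main obstacle to be this decomposition step --- making precise the cut into $n-2$ notched half-planes: identifying the $n-2$ sectors, choosing $U$ so that the notches are genuinely rectangular, and verifying that the edge identifications are exactly the cyclic orientation-reversing pattern of Definition~\ref{defn:pend} --- rather than the coordinate computation, which is routine. A secondary point needing care is the bookkeeping in the metric-residue identity, in particular fixing the sign coming from the cyclic-ordering ambiguity in Definition~\ref{defn:metres} so that the telescoped sum is the positive number $C$.
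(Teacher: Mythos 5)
Your approach is correct in outline, but it proves strictly more than the paper itself does: the paper's proof of Theorem~\ref{thm:streb} establishes only the metric-residue identity, explicitly deferring the planar-end decomposition to \cite{Streb} (\S10.4), whereas you reprove both parts from scratch. For the part the paper does prove, the methods also differ. The paper integrates $\sqrt{q}$ directly over the alternating horizontal/vertical polygon $P$ bounding a truncation and observes, after passing to the $\zeta$-coordinate, that the vertical contributions cancel in pairs while the horizontal ones acquire alternating signs, yielding $\pm\int_P\sqrt{q}=\sum(-1)^{i+1}(y_i-x_i)$ without ever invoking a local normal form. You instead build the whole decomposition from the normal form, integrate to get the multivalued $\zeta$, and then match the translational holonomy of $\zeta$ to the alternating sum produced by composing the $n-2$ gluing reflections around the cycle. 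Both compute the same holonomy, just from opposite ends; your route is more self-contained (no appeal to Strebel for the structural claim) but considerably longer, and, as you correctly flag, the genuinely delicate step is making the sector decomposition precise: identifying the $n-2$ separatrices, choosing $\partial U$ so the boundary becomes horizontal/vertical in each sector's $\zeta$-chart, and verifying the gluing pattern is the cyclic one of Definition~\ref{defn:pend} -- exactly the work the paper avoids by citation. One small imprecision worth fixing: what you cut along should be the critical horizontal trajectories ending at $p$, which coincide with the locus $\{\mathrm{Im}\,\zeta=0\}$ only after normalizing the constant of integration; that this normalization is even possible rests on $\mathrm{Im}\,\zeta$ being single-valued, which in turn uses that the monodromy is a real (horizontal) translation -- so the chain of implications is fine, but the phrase \emph{preimage of the real axis} should be unpacked.
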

\begin{proof}
We provide only the proof for the statement regarding the metric residue, as the proof of the planar-end structure appears in \cite{Streb} (see \S10.4 of the book).\\

Let $P$ be a polygon made from the boundaries of the rectangular ``notches" of each half-plane of the planar end (see Definition \ref{defn:pend}) oriented counter-clockwise. Then $P$ is a polygon enclosing $p$ contained in $U$, and consists of alternating horizontal and vertical segments, with the horizontal edges having lengths $b_i-a_i$ for $1\leq i\leq n$.\\

We first note that
\begin{equation}\label{eq:asum}
\pm \int\limits_{P} \sqrt q = \displaystyle\sum\limits_{i=1}^n (-1)^{i+1}(b_i-a_i)
\end{equation}

This follows from the fact that by a change of coordinates (see (\ref{eq:choc})) the integral over three successive edges (horizontal-vertical-horizontal) equals integrating the form $d\zeta$ on the complex ($\zeta$-)plane over a horizontal  edge that goes from the right to left on the upper half-plane followed by one over a vertical edge followed by one over a horizontal edge that goes from left to right in the lower half-plane. Hence the integral over the horizontal sides picks up the horizontal lengths but the sign switches  over the two successive horizontal edges. Meanwhile the integral over the vertical sides contribute to the imaginary part, but they cancel out since two successive vertical segments on the upper ($\zeta$)-half-plane are of equal length but of opposite orientation.\\

The left hand side of (\ref{eq:asum}) is equal to the real analytic residue $C$ (upto sign) by definition (\ref{eq:ares}), and the right hand side is equal to the metric residue of the planar end as defined in Definition \ref{defn:pend}, and the proof is complete.
\end{proof}

\begin{table}\label{table:tab}

\begin{center}
\begin{tabular}{|l|l|}
  \hline
  \textbf{Analytic notions} & \textbf{Metric notions} \\ \hline
   $L^1$-norm  & Area \\
  \hline
   At least one non-simple pole & Infinite area \\
  \hline
   Zero of order $n$ & Cone point of angle $(n+2)\pi$ \\
  \hline
   Simple (order $1$) pole  & Cone point of angle $\pi$\\
  \hline
   Order $2$ pole  & Half-infinite cylinder \\
  \hline
   Order $n>2$ pole  & Planar end \\
  \hline
   Analytic residue   & Metric residue \\
  \hline

\end{tabular}
\end{center}
\caption{A glossary of the correspondence between analytic and metric properties of a meromorphic quadratic differential.}
\end{table}

\section{Preliminaries}

In this section we introduce some of the terminology and observations used in this paper.

\subsection{Half-plane differentials.}

The following definition was already mentioned in \S1:

\begin{defn}[Half-plane surface]\label{defn:hp}
Let $\{H_i\}_{1\leq i\leq N}$ be a collection of $N\geq 2$ euclidean half planes and let $\mathcal{I}$ be a finite partition into sub-intervals of the boundaries of these half-planes. A \textit{half-plane surface} $\Sigma$ is a complete singular flat surface obtained by gluings by (oriented) isometries amongst intervals from $\mathcal{I}$.
\end{defn}

Such a half-plane surface has a number of planar ends as in Definition \ref{defn:pend}, and is equipped with a meromorphic quadratic differential $q$ (the \textit{half-plane differential})  that restricts to $dz^2$ in the usual coordinates on each half-plane.

\subsection{Residue}
The residue $a_j$ associated with a puncture (or end) of a half-plane surface has both an metric  definition (Definition \ref{defn:metres}) and the following analytic definition (see (\ref{eq:ares}) ):

\begin{defn}[Analytic residue] The residue $a_j$ of the half-plane differential $q$ at a pole $p_j$ is defined to be the absolute value of the integral
\begin{equation*}
a_j = \int\limits_{\gamma_j} \sqrt{q}
\end{equation*}
where $\gamma_j$ is a simple closed curve enclosing $p_j$ and contained in a chart where one can define $\pm\sqrt{q}$. This gives a positive real number (see Theorem \ref{thm:streb}) and in particular, equals the metric residue of the corresponding planar end.
\end{defn}

\subsection{Planar ends and truncations}

A planar end (Definition \ref{defn:pend}) can be thought of as a neighborhood of $\infty$ on $\mathbb{C}$ in the metric induced by the restriction of a ``standard" holomorphic quadratic differential $\phi$:
\begin{equation}\label{eq:phi}
\phi = \phi(z)dz^2 = \left(z^{n-2} + iaz^{n/2 -2}\right)dz^2
\end{equation}
where $a$ is the (positive, real) residue at the pole at $0$ and $n\geq 4$ is even. See \textit{Example 2} in \S4.1 for details. When $n$ is odd, the residue is necessarily zero, and the metric is induced by the differential $\phi = \phi(z)dz^2 = z^{n-2}dz^2$.\\
By inversion ($w= 1/z$), this can be thought of as the metric induced by the restriction of a meromorphic quadratic differential (which we also denote by $\phi$) to a neighborhood of $0\in\mathbb{C}$:
\begin{equation}\label{eq:stan}
\left(\frac{1}{w^{n+2}} + \frac{ia}{w^{n/2 +2}}\right)dw^2
\end{equation}
when $n$ is even and $\left(\frac{1}{w^{n+2}}\right) dw^2$ when $n$ is odd.

\begin{defn}[$\mathcal{P}_H$]\label{defn:trunc} For a planar end with residue $a$, the \textit{truncation at height $H$} denoted by $\mathcal{P}_H$, is when the missing ``notches" are rectangles of horizontal width $H$ and vertical heights $H/2$, except one rectangle of horizontal width $H+a$.  The boundary of the planar $\mathcal{P}_H$ is then a polygon of alternating horizontal and vertical sides, each of length $H$ except one horizontal side of length $H+a$. (This is then compatible with the metric residue being $a$ - see Definition \ref{defn:metres}.)
\end{defn}

\textit{Remark.} Any planar end has a  truncation at height $H$, for sufficiently large $H$.\\

\begin{figure}[h]
  \centering
  \includegraphics[scale=0.6]{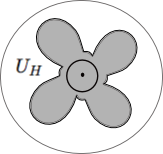}\\
  \caption{A truncation of a planar end can be conformally identified with a neighborhood $U_H$ of $0\in \mathbb{C}$ (shown shaded). Lemma \ref{lem:uh} gives estimates on its dimensions. }
  \end{figure}

By the previous discussion we can identify  $\mathcal{P}_H\cup \infty$ with a neighborhood $U_H$ of $0\in \mathbb{C}$ , via a map taking $\infty$ to $0$, which is an isometry with respect to the singular flat metric on the planar end and the ${\phi}$-metric on $U_H$, and is hence conformal. The following estimates about this simply connected domain in $\mathbb{C}$ will be useful later:

\begin{lem}\label{lem:uh} There exist universal constants $D_1, D_2>0$ such that for all sufficiently large $H$, we have:
\begin{equation}\label{eq:uh}
\frac{D_1}{H^{2/n}} \leq dist(0,\partial U_H)  \leq \frac{D_2}{H^{2/n}}
\end{equation}
where $n$ is the number of half-planes for the planar end.\\ 

(In this paper, $dist(q, K) = \inf\limits_{p\in K} \left| p-q\right|$ where $q\in \mathbb{C}$ and $K\subset \mathbb{C}$ is a compact set.) 
\end{lem}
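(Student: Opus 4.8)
The plan is to work entirely with the explicit standard differential $\phi$ from \eqref{eq:stan}, using the change-of-coordinates map \eqref{eq:choc} to compare the $\phi$-metric near $0$ with the euclidean metric on the truncated planar end $\mathcal{P}_H$. Write $w$ for the coordinate near $0$ and $\zeta = \int_0^w \sqrt{\phi}$ for the developing (natural) coordinate, which maps a neighbourhood of $0$ to a neighbourhood of $\infty$ in the $\zeta$-plane and realizes the planar-end structure: the truncation $\mathcal{P}_H$ corresponds exactly to the region of the $\zeta$-plane lying outside the notching rectangles of horizontal width $\asymp H$ and vertical height $\asymp H$. So in $\zeta$-coordinates the complement of $U_H$ sits at euclidean distance $\asymp H$ from $\infty$, i.e. $|\zeta|\asymp H$ on $\partial U_H$ (up to fixed multiplicative constants coming from the at-most-$n$ notches and the single width-$(H+a)$ rectangle, which only perturb things by a bounded factor once $H$ is large compared to $a$). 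The whole point is then to transport the relation $|\zeta|\asymp H$ on $\partial U_H$ back to an estimate on $|w| = \mathrm{dist}(0,\partial U_H)$.

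The key computation is the asymptotics of the map $w\mapsto \zeta$ near $w=0$. Since $\phi(w) = w^{-(n+2)}(1 + i a\, w^{n/2} + O(w^{n/2+1}))$ for $n$ even (and $\phi(w)=w^{-(n+2)}$ for $n$ odd), we have $\sqrt{\phi(w)} = w^{-(n+2)/2}(1 + O(w^{n/2}))$, and integrating gives
\begin{equation*}
\zeta(w) = \int_0^w \sqrt{\phi}\, = \frac{-2}{n}\, w^{-n/2}\bigl(1 + O(w^{n/2})\bigr)
\end{equation*}
(the constant of integration is absorbed into the choice of basepoint / the $O$-term, and the branch of $w^{-n/2}$ is chosen consistently on the simply connected domain). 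Hence $|\zeta(w)| = \tfrac{2}{n}|w|^{-n/2}\bigl(1 + o(1)\bigr)$ as $w\to 0$. Setting $|\zeta|\asymp H$ on $\partial U_H$ and inverting this relation yields $|w|\asymp H^{-2/n}$, which is precisely \eqref{eq:uh} with explicit universal constants $D_1, D_2$ depending only on $n$ (indeed one can read off $D_1, D_2$ as $(2/n)^{2/n}$ times the constants implicit in the comparison $|\zeta|\asymp H$ on $\partial U_H$, adjusted by the $o(1)$ factor). One should note that $\mathrm{dist}(0,\partial U_H)$ in the \emph{euclidean} metric of the $w$-plane is comparable to $|w|$ for points $w\in\partial U_H$ because $\partial U_H$ is, for large $H$, contained in a small punctured disc about $0$ on which the distortion is controlled; more simply, $\mathrm{dist}(0,\partial U_H) = \inf_{w\in\partial U_H}|w|$ by definition, so it suffices to bound $|w|$ above and below uniformly over $\partial U_H$, which the displayed asymptotic does.

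The main obstacle is making the informal comparison ``$|\zeta|\asymp H$ on $\partial U_H$'' rigorous and uniform: the boundary $\partial U_H$ is the $\zeta$-preimage of the boundary polygon of $\mathcal{P}_H$, which is a union of horizontal and vertical segments of length $\asymp H$ rather than a circle, so its points range over $|\zeta|\in [cH, CH]$ for fixed $0<c<C$; one must check that $c,C$ can be taken independent of $H$ (clear, since rescaling the $\zeta$-plane by $1/H$ turns $\partial \mathcal{P}_H$ into a fixed polygon shape, up to the bounded perturbation from the one width-$(H+a)$ side which vanishes in the rescaled limit). The secondary technical point is controlling the error term in the $w\mapsto\zeta$ asymptotic uniformly on the relevant punctured disc and checking that inverting $|\zeta| = \tfrac{2}{n}|w|^{-n/2}(1+o(1))$ really does give two-sided bounds on $|w|$ with the stated power $H^{-2/n}$; this is routine once the $o(1)$ is quantified as $O(|w|^{n/2}) = O(H^{-1})$. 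For $n$ odd the residue term is absent and the computation is the same with $a=0$, so no separate argument is needed.
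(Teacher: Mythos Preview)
Your proof is correct and follows essentially the same approach as the paper: the paper sandwiches $\partial \mathcal{P}_H$ between circles of $\phi$-circumference $C_1H$ and $C_2H$ and then computes that a euclidean circle $\partial B_r$ in the $w$-plane has $\phi$-circumference $\asymp r^{-n/2}$, while you work directly with the developing coordinate and the asymptotic $|\zeta(w)| \sim \tfrac{2}{n}|w|^{-n/2}$ together with $|\zeta|\asymp H$ on $\partial U_H$. Both routes amount to the same computation, integrating $\sqrt{\phi}\sim w^{-(n+2)/2}$ (tangentially in the paper, radially in your argument) to convert the $\phi$-scale $H$ into the euclidean scale $H^{-2/n}$; the only cosmetic point is that your integral should be based at some $w_0\neq 0$ rather than at $0$, since $\int_0^w\sqrt{\phi}$ diverges, but this does not affect the leading asymptotic.
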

\begin{proof}
Observe that given a planar end, one can circumscribe a circle of circumference $C_2H$ and inscribe one of circumference $C_1H$ around $\partial \mathcal{P}_H$, where $0<C_1<1<C_2$ are constants that depend only on $n$ (for sufficiently large $H$, the effect of the fixed $a$ is negligible).\\
\begin{figure}[h]
  \centering
  \includegraphics[scale=0.5]{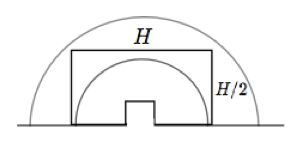}\\
  \end{figure}

Now the circumference of the boundary circle $\partial B_r$ in the metric induced by $q$ (see  (\ref{eq:stan}) is:
\begin{equation*}
C(r) = \int\limits_{\partial B_r} \lvert \sqrt{q}\rvert   = \int\limits_{\lvert w\rvert = r} \Big\lvert\frac{1}{w^{n/2+1}} + \frac{ia}{2w}\Big\rvert dw  \approx   \int\limits_{\lvert w\rvert = r} \Big\lvert\frac{1}{w^{n/2+1}} \Big\rvert dw  = O\left(\frac{1}{r^{n/2}}\right)
\end{equation*}
for sufficiently small $r$.\\

The previous observation together with this calculation then implies that $U_H$ is contained within two boundary circles which yield (\ref{eq:uh}). (Here $D_1,D_2$ depend on $C_1,C_2$.)
\end{proof}

\begin{cor}\label{cor:uhmap}
The conformal map $\phi_H:U_H\to \mathbb{D}$ that takes $0$ to $0$, satisfies
\begin{equation}\label{eq:uhbd}
\frac{1}{4D_2}\leq \frac{\left|\phi_H^\prime(0)\right|}{H^{2/n}}\leq \frac{1}{D_1}
\end{equation}
for all $H>0$.
\end{cor}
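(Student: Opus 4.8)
The plan is to deduce Corollary \ref{cor:uhmap} directly from Lemma \ref{lem:uh} by translating the estimate on $dist(0,\partial U_H)$ into an estimate on the derivative at $0$ of the uniformizing map, using two classical facts of geometric function theory: the Koebe one-quarter theorem for the lower bound on $\lvert\phi_H'(0)\rvert$, and the Schwarz lemma for the upper bound. The only structural input is that $U_H$ is a simply connected proper subdomain of $\mathbb{C}$ containing $0$ --- which holds since $\mathcal{P}_H\cup\infty$ is homeomorphic to a disk and, by Lemma \ref{lem:uh}, its boundary sits at finite positive distance from $0$ --- so that the Riemann map $\phi_H\colon U_H\to\mathbb{D}$ with $\phi_H(0)=0$ exists and is biholomorphic. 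Everything else is a short routine deduction, and indeed the only genuine content is Lemma \ref{lem:uh} itself; I do not expect any real obstacle in this corollary.

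For the lower bound I would pass to the inverse map $\psi_H:=\phi_H^{-1}\colon\mathbb{D}\to U_H$, which is univalent with $\psi_H(0)=0$ and $\psi_H'(0)=1/\phi_H'(0)$. Applying the Koebe one-quarter theorem to $\psi_H$ shows that the image $U_H=\psi_H(\mathbb{D})$ contains the euclidean disk of radius $\tfrac14\lvert\psi_H'(0)\rvert$ centered at $0$, so that
\[
dist(0,\partial U_H)\ \geq\ \frac{1}{4\,\lvert\phi_H'(0)\rvert}.
\]
Combining this with the upper estimate $dist(0,\partial U_H)\leq D_2/H^{2/n}$ from Lemma \ref{lem:uh} gives $\lvert\phi_H'(0)\rvert\geq H^{2/n}/(4D_2)$, which is the left-hand inequality of \eqref{eq:uhbd}.

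For the upper bound I would set $d:=dist(0,\partial U_H)$, so that $B(0,d)\subseteq U_H$, and consider the holomorphic self-map $z\mapsto\phi_H(dz)$ of $\mathbb{D}$, which fixes $0$. The Schwarz lemma then bounds the modulus of its derivative at $0$ by $1$, i.e. $d\,\lvert\phi_H'(0)\rvert\leq 1$; feeding in the lower estimate $dist(0,\partial U_H)\geq D_1/H^{2/n}$ from Lemma \ref{lem:uh} yields $\lvert\phi_H'(0)\rvert\leq H^{2/n}/D_1$, the right-hand inequality of \eqref{eq:uhbd}. Since Lemma \ref{lem:uh} is stated for all sufficiently large $H$, this argument produces \eqref{eq:uhbd} in the same range; to obtain it literally for all $H>0$ one only needs to enlarge the constants to absorb the finitely many remaining small values, which is harmless.
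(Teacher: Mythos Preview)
Your proof is correct and is essentially the same as the paper's: the paper applies the Koebe-type distortion inequality $\tfrac14|f'(0)|\le dist(f(0),\partial f(\mathbb{D}))\le |f'(0)|$ (cited from \cite{Pom}) to $f=\phi_H^{-1}$, which at $z=0$ is exactly the combination of the Koebe one-quarter theorem and the Schwarz lemma that you invoke separately. Your remark about absorbing the small-$H$ range into the constants is a nice touch that the paper's proof leaves implicit.
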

\begin{proof}  Note that such a conformal map is determined uniquely upto rotation (fixing $0$), but this does not change the magnitude of the derivative at $0$.\\
For any conformal map $f:\mathbb{D}\to \mathbb{C}$ the following holds for any $z\in \mathbb{D}$ (see Corollary 1.4 of \cite{Pom}):
\begin{equation*}
\frac{1}{4} \left(1 - \left|z\right|^2\right)\left|f^\prime(z)\right| \leq dist (f(z), \partial f(\mathbb{D})) \leq  \left(1 - \left|z\right|^2\right)\left|f^\prime(z)\right|
\end{equation*}
We apply this to the conformal map $f = \phi_H^{-1}: \mathbb{D} \to U_H\subset \mathbb{C}$ and $z=0$. We get by rearranging and using that $f(0)=0$ that
\begin{equation}\label{eq:shk2}
dist(0, \partial f(\mathbb{D})) \leq \left|f^\prime(0)\right| \leq 4 dist(0, \partial f(\mathbb{D}))
\end{equation}
By the previous lemma, and the fact that $\phi_H^\prime(0) = 1/f^\prime(0)$, (\ref{eq:uhbd}) now follows.
\end{proof}

We also note the following monotonicity:
\begin{lem}\label{lem:mon} Let $\phi_H:U_H\to \mathbb{D}$ be the conformal map preserving $0$. Then the derivative $\left|\phi^\prime_H(0)\right|$ is strictly increasing with $H$.
\end{lem}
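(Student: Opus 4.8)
The plan is to deduce the claim from the classical fact that the conformal radius is monotone under inclusion of domains — a direct consequence of the Schwarz lemma — once we know that the domains $U_H$ form a \emph{nested} family inside $\mathbb{C}$. So the proof has two parts: an (easy) geometric observation that $H_1 > H_2$ forces $U_{H_1}\subsetneq U_{H_2}$, and then a formal Schwarz-lemma computation.

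First I would record the nesting. Fix $H_1 > H_2$. By Definition \ref{defn:trunc}, $\mathcal{P}_{H_1}$ is obtained from the \emph{same} planar end as $\mathcal{P}_{H_2}$ by deleting strictly larger rectangular notches (horizontal widths $H_1 > H_2$, vertical heights $H_1/2 > H_2/2$, with the one exceptional notch of width $H_1 + a > H_2 + a$); hence $\mathcal{P}_{H_1}$ is a sub-surface of $\mathcal{P}_{H_2}$. Since the conformal identifications $\mathcal{P}_H\cup\infty\cong U_H$ from the discussion preceding Lemma \ref{lem:uh} are all induced by one fixed isometry of the planar end onto a neighbourhood of $\infty$ in the standard $\phi$-metric on $\mathbb{C}$ (see (\ref{eq:phi})), post-composed with the inversion $w=1/z$, the inclusion $\mathcal{P}_{H_1}\subset\mathcal{P}_{H_2}$ becomes a genuine strict inclusion $U_{H_1}\subsetneq U_{H_2}$ of bounded simply connected domains in $\mathbb{C}$, both containing $0$. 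Now let $\phi_{H_i}\colon U_{H_i}\to\mathbb{D}$ be the conformal maps fixing $0$ and put $g:=\phi_{H_2}\circ\phi_{H_1}^{-1}$. Because $\phi_{H_1}^{-1}(\mathbb{D})=U_{H_1}\subset U_{H_2}$, the map $g\colon\mathbb{D}\to\mathbb{D}$ is holomorphic with $g(0)=0$, and its image $g(\mathbb{D})=\phi_{H_2}(U_{H_1})$ is a \emph{proper} subset of $\mathbb{D}$ since $U_{H_1}\subsetneq U_{H_2}$; hence $g$ is not an automorphism of $\mathbb{D}$ and the Schwarz lemma gives the strict bound $\lvert g'(0)\rvert<1$. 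By the chain rule and $\phi_{H_1}^{-1}(0)=0$ we have $g'(0)=\phi_{H_2}'(0)/\phi_{H_1}'(0)$, so $\lvert\phi_{H_2}'(0)\rvert<\lvert\phi_{H_1}'(0)\rvert$; as $H_1>H_2$ were arbitrary, $\lvert\phi_H'(0)\rvert$ is strictly increasing in $H$.

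The only point that needs any care is the nesting step: one must be sure the different conformal models $U_H$ are realised as honestly nested subregions of $\mathbb{C}$, not merely abstractly conformally embedded in one another. This is exactly what the bookkeeping with a single global isometry of the fixed planar end provides; after that the monotonicity — and its strictness, via the equality case of the Schwarz lemma — is purely formal. (The two-sided estimate $\lvert\phi_H'(0)\rvert\asymp H^{2/n}$ of Corollary \ref{cor:uhmap} is consistent with this monotonicity but, being only an order-of-magnitude bound, does not by itself give strict increase.)
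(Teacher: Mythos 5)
Your proof is correct and follows essentially the same route as the paper: establish the strict nesting $U_{H_1}\subsetneq U_{H_2}$ for $H_1>H_2$ (via the nesting of the truncated planar ends), then apply the Schwarz lemma, including the equality case, to the composite $\phi_{H_2}\circ\phi_{H_1}^{-1}:\mathbb{D}\to\mathbb{D}$. The only difference is that you state the nesting of the truncations in the correct direction, $\mathcal{P}_{H_1}\subset\mathcal{P}_{H_2}$, which quietly corrects what reads as a reversed inclusion ($\mathcal{P}_{H}\subset\mathcal{P}_{\hat{H}}$) in the paper's one-line argument.
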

\begin{proof} For $\hat{H}>H$ we have the strict inclusions $\mathcal{P}_H\subset \mathcal{P}_{\hat{H}}$ and $U_{\hat{H}} \subset U_H$. Hence $\phi_H\circ \phi_{\hat{H}}^{-1}:\mathbb{D}\to \mathbb{D}$ is well-defined, and the lemma follows from an application of the Schwarz lemma.
\end{proof}

\subsection{Leading order term}

\begin{defn} In a choice of local coordinates $z$ around the pole $p$, any meromorphic quadratic differential $q$ has a local expression:
\begin{equation*}
q = \left(\frac{a_n}{z^n} + \frac{a_{n-1}}{z^{n-1}} +\cdots \frac{a_{1}}{z} + a_0 +  \cdots\right)dz^2
\end{equation*}
and we define the \textit{leading order term} at the pole to be  the positive real number $c_j = \left| a_n\right|$.
\end{defn}

\textit{Remarks.} 1. A pole at $p$ of a half-plane differential $q$ has a neighborhood $U$ that is isometric to a planar end as in Definition \ref{defn:pend}. The leading order term of a half-plane differential at a pole determines, roughly speaking, the relative scale of the conformal disk $U$ is on the Riemann surface.\\

2. We shall sometimes refer to the leading order term of $q$ at $p$  \textit{with respect to $U$}, where $U$ is simply-connected domain containing $p$. This means that the coordinate chart that we consider is the conformal  (Riemann) map $\phi:U\to \mathbb{D}$ that takes $p$ to $0$. Note that by the above definition, the leading order term is independent of the choice of such a $\phi$ (rotation does not change the magnitude).

\begin{lem}[Pullbacks and leading order terms]\label{lem:pullb}Let $f:\mathbb{D}\to \mathbb{C}$ be a univalent conformal map such that $f(0)=0$ and let $q$ be a meromorphic quadratic differential on $\mathbb{C}$ having the local expression
\begin{equation*}
q(z)dz^2 = \left(\frac{a_n}{z^n} + \frac{a_{n-1}}{z^{n-1}} +\cdots \frac{a_{1}}{z} + a_0 +  \cdots\right)dz^2
\end{equation*}
in the usual $z$-coordinates. Then the pullback quadratic differential $f^\ast q$ on $\mathbb{D}$ has leading order term equal to $\left| f^\prime(0)\right|^{2-n}a_n$ at the pole at $0$.
\end{lem}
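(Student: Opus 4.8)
The plan is to compute directly how the coefficients of a Laurent expansion transform under a conformal change of coordinates, and to extract only the leading (lowest-order, i.e. most singular) term. Write $w = f(z)$ with $f(z) = f'(0)z + O(z^2)$ near $z=0$; since $f$ is univalent with $f(0)=0$ we have $f'(0)\neq 0$. A quadratic differential transforms by $f^\ast q = q(f(z))\,f'(z)^2\,dz^2$, so the pulled-back coefficient function is $\tilde q(z) = q(f(z))f'(z)^2$. Since $q(w) = a_n w^{-n} + (\text{higher order in }w)$ and $f(z)^{-n} = (f'(0)z)^{-n}(1 + O(z))^{-n} = f'(0)^{-n}z^{-n}(1 + O(z))$, while $f'(z)^2 = f'(0)^2(1 + O(z))$, multiplying gives
\begin{equation*}
\tilde q(z) = a_n\, f'(0)^{-n} f'(0)^2\, z^{-n}\bigl(1 + O(z)\bigr) + (\text{terms of order} \geq -n+1 \text{ in } z).
\end{equation*}
Hence the coefficient of $z^{-n}$ in the expansion of $f^\ast q$ is exactly $a_n f'(0)^{2-n}$, and no term more singular than $z^{-n}$ appears (conjugation by a biholomorphism preserves the order of the pole). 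By the definition of the leading order term as the modulus of the top Laurent coefficient, the leading order term of $f^\ast q$ at $0$ is $\left|a_n f'(0)^{2-n}\right| = \left|f'(0)\right|^{2-n}\left|a_n\right| = \left|f'(0)\right|^{2-n}a_n$, using that $a_n$ is already real positive (it is the leading order term of $q$ in the $w$-coordinate).

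The key steps, in order, are: (1) recall the transformation rule $f^\ast q = q\circ f \cdot (f')^2$; (2) substitute the Laurent expansion of $q$ and the Taylor expansion $f(z) = f'(0)z + \cdots$, observing that only the linear term of $f$ contributes to the coefficient of the lowest power $z^{-n}$ (any contribution from higher Taylor coefficients of $f$, or from $a_{n-1}, a_{n-2}, \ldots$, raises the power of $z$); (3) read off the top coefficient as $a_n f'(0)^{2-n}$; (4) take moduli and simplify. There is essentially no obstacle here — the only point requiring a word of care is step (2), namely the bookkeeping that confirms the pole order is unchanged and that the subleading Laurent coefficients of $q$ together with the nonlinear Taylor coefficients of $f$ cannot feed back into the $z^{-n}$ coefficient; this is immediate once one notes $f(z)^{-k} = O(z^{-k})$ and $-k \geq -n+1 > -n$ for $k < n$, and that $f'(z)^2 = f'(0)^2 + O(z)$ so its $O(z)$ part only shifts powers upward. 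The hypothesis $f(0)=0$ is what makes $f$ fix the pole; univalence guarantees $f'(0)\neq 0$ so that $f'(0)^{2-n}$ is well-defined.

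One small remark worth including: the statement as phrased takes $a_n$ to be the leading order term of $q$ (hence real positive by the standing convention for half-plane differentials), but the computation shows more generally that the \emph{complex} top Laurent coefficient transforms by the factor $f'(0)^{2-n}$, and so the leading order term — being the modulus — transforms by $\left|f'(0)\right|^{2-n}$; this is consistent with the earlier observation that rotations of the coordinate do not affect the leading order term. This lemma is exactly what is needed to track leading order terms under the conformal identifications (such as $\phi_H: U_H \to \mathbb{D}$ of Corollary \ref{cor:uhmap}) used elsewhere in the construction.
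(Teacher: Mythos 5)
Your proof is correct and is precisely the approach the paper takes: the paper invokes the transformation law $q\circ f(z)\, f'(z)^2$ and says "the lemma follows by a calculation involving a series expansion, using that $f(z)=f'(0)z+b_2z^2+O(z^3)$," and you have simply carried out that series-expansion bookkeeping in full. Your closing remark about the modulus is also apt, since strictly the definition sets the leading order term to $|a_n|$, so the statement's conclusion $|f'(0)|^{2-n}a_n$ implicitly assumes $a_n>0$, which you correctly flag.
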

\begin{proof}
By the usual transformation law for change of coordinates, the local expression for the pullback differential is $q\circ f(z) f^\prime(z)^2$. The lemma follows by a calculation involving a series expansion, using that
\begin{center}
$f(z) = f^\prime(0)z + b_2z^2 +O(z^3)$.
\end{center} \end{proof}

\section{Examples}

\subsection{Explicit hpd's in $\hat{\mathbb{C}}$} A meromorphic quadratic differential on $\hat{\mathbb{C}} = \mathbb{C} \cup \{\infty\}$ can be expressed as $q(z)dz^2$ in the affine chart $\mathbb{C}$, where $q(z)$ is a meromorphic function. The following are examples of  such functions which yield a half-plane differential (hpd).\\

\textit{Example 0.} The quadratic differential $dz^2$ has a pole of order $4$ at $\infty$, and it induces the usual euclidean metric on the plane.\\

\textit{Example 1.} The quadratic differential $z^ndz^2$ for $n\geq 1$ has a pole of order $n+4$ at infinity, with analytic residue (and metric residue) equal to zero. The singular flat metric has $n+2$ half-planes glued around the origin.\\

\textit{Example 2.} The quadratic differential $\phi$ (see (\ref{eq:phi}))
\begin{equation*}
\left(z^{n-2} + iaz^{n/2 -2}\right)dz^2
\end{equation*}
 on $\mathbb{C}$ for an even integer $n\geq 4$ and some real number $a>0$ has $n/2+1$ zeroes, a pole of order $n+2$ at infinity, and a connected critical graph with a metric residue $\pi a$ (see figure, and \cite{HM} and \cite{Wan} for details).\\

\begin{figure}[h!]
  \centering
  \includegraphics[scale=0.7]{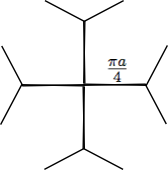}\\
  \caption{Partial picture of the critical graph when $n=8$ in \textit{Example 2}. The finite-length saddle connections have length $\pi a/4$. }
  \end{figure}
  
 In local $w$-coordinates at infinity obtained by inversion, the quadratic differential has the standard  form (\ref{eq:stan}) as in \S3.
The analytic residue can thus be computed in these coordinates as follows:
\begin{equation*}
Res_q(0) = \displaystyle\int\limits_\gamma \sqrt{q} =  \displaystyle\int\limits_\gamma \left(\frac{1}{w^{n/2+1}} + \frac{ia}{2w}\right)dw = \pi a
\end{equation*}
which is equal to the metric residue.

\subsection{Other hpd's in  $\hat{\mathbb{C}}$}

Since there is only a unique Riemann surface conformally equivalent to $\mathbb{C}$, it is easy to construct  half-plane differentials on the Riemann sphere:

 \begin{figure}[h]
  \centering
  \includegraphics[scale=0.75]{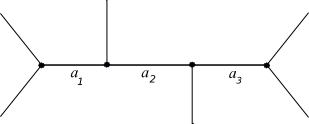}\\
  \caption{Attaching half-planes along this metric tree produces an hpd with a pole of order $8$ and residue $\left|2a_1-2a_3\right|$. }
  \end{figure}

\subsubsection*{Single-poled} Take any metric tree $T$ with $n$ edges of infinite length. Then there are $n$ resulting boundary lines (think of the boundary of an $\epsilon$-thickening of $T$ and let $\epsilon\to 0$) and one can attach $n$ euclidean half-planes to these boundary lines by isometries along their boundaries. The resulting Riemann surface is simply connected, and parabolic, and hence $\mathbb{C}$, and is equipped with an hpd with order $n+2$ pole at infinity. The  metric spine is $T$, and the metric residue at the pole can be read off from the lengths of edges in $T$ (see Figure 7).

\subsubsection*{Multiple-poled} Consider $\mathbb{C}$ obtained by gluing half-planes to a metric tree $T$ as above. The following local modification introduces another pole: Take a subinterval of one of the edges of $T$ and slit it open, introduce $n^\prime$ vertices on the resulting boundary circle, and attach $n^\prime$ semi-infinite edges from those vertices. Along each of the $n^\prime$ resulting new boundary lines, we attach half-planes as before. Topologically, one has just attached a punctured disk to $\mathbb{C}$ after the slit, so the resulting surface (after adding the puncture) is still $\hat{\mathbb{C}}$, but this now has a half-plane differential with a  new pole of order $n^\prime +2$.  One can also vary the residues at the poles by controlling edge lengths (see Figure 8).

\begin{figure}[h]
  \centering
  \includegraphics[scale=0.65]{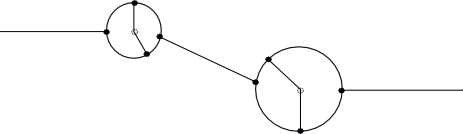}\\
  \caption{An hpd on $\hat{\mathbb{C}}$: The zeroes are the dark vertices, and the poles of order $4$ are at the lighter vertices and infinity. Varying the lengths of various edges changes the residues and leading order terms. }
  \end{figure}

\subsection{Interval-exchange surfaces}

Introduce a finite-length horizontal slit on $\mathbb{C}$ and glue the resulting two sides of the interval by an interval exchange (see Figure 9). The resulting surface (punctured at infinity) can be of any genus, by prescribing the combinatorics of the gluing appropriately. By adding the two semi-infinite horizontal intervals on either side of the slit, one can easily see that this is a half-plane surface with two half-planes (above and below the horizontal line). The half-plane differential has a single order-$4$ pole at infinity. \\

\begin{figure}[h!]
  \centering
  \includegraphics[scale=0.8]{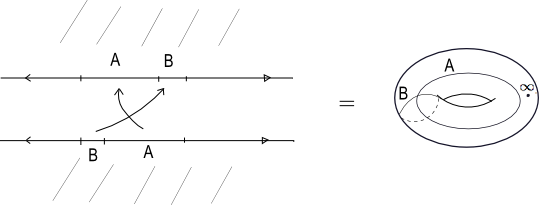}\\
  \caption{The half-plane surface obtained by the interval exchange shown on the left gives a punctured torus.  This is not the generic case, as the resulting hpd has a zero of order two.}
\end{figure}

One consequence of Theorem \ref{thm:main} is that one gets \textit{all} once-punctured Riemann surfaces this way. The following is a simple dimension count that indicates that this is possible:\\

We place a vertex at infinity to get a cell decomposition of a \textit{closed} surface of genus $g$. We have
\begin{equation}\label{eq:euler}
v-e+f = 2-2g
\end{equation}
where $v$, $e$ and $f$ are the number of vertices, edges and vertices of the resulting decompistion.\\

Note that there are two faces (the two half-planes), so $f=2$.  Moreover, in the generic case, all vertices are trivalent except one (the vertex at infinity, which has valence $2$). Hence
\begin{equation*}
2e = 3(v-1) +2
\end{equation*}
Using these facts in  (\ref{eq:euler}) we get 
\begin{equation*}
e= 6g-1
\end{equation*}

To count the resulting number of parameters, note that there are two edges corresponding to the two semi-infinite edges, and the conformal structure of the half-plane surface does not change if one scales all finite lengths by a positive real. Hence the total dimension of the set of parameters is $(6g-4)$, which is the dimension of the moduli space $\mathcal{M}_{g,1}$.

\subsection{Single-poled hpd's on surfaces}

Take a single-poled hpd on $\hat{\mathbb{C}}$ and introduce a slit on one of the edges of the metric spine, and glue the resulting two sides by an interval exchange. By suitable choice of combinatorics of gluing, this gives higher genus half-plane surfaces. (See Figure 9)

\begin{figure}[h!]
  \centering
  \includegraphics[scale=0.9]{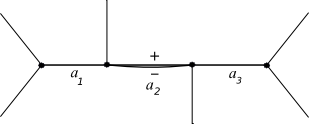}\\
  \caption{Slitting the metric spine of an hpd on $\hat{\mathbb{C}}$ along an edge and then gluing the resulting sides by an interval exchange produces a single-poled hpd on higher genus surfaces. }
  \end{figure}

\subsection{A low complexity example:}

The following lemma deals with the exceptional case in Theorem \ref{thm:main}. 

\begin{lem} Any meromorphic quadratic differential on $\hat{\mathbb{C}}$  with a single pole $p$ of order 4 has residue $0$ at $p$.
\end{lem}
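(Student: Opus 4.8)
The plan is to move the pole to $\infty$ by a Möbius transformation, classify the resulting differential explicitly, and then read off the residue from that normal form. The analytic residue is defined as $\int_\gamma \sqrt q$ over a small loop $\gamma$ around the pole, i.e.\ by integrating the locally defined holomorphic $1$-form $\sqrt q$; since pullback of forms commutes with integration, this quantity is invariant under conformal changes of coordinate. Hence it suffices to prove the claim after normalizing so that the single pole $p$ sits at $\infty$.

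So assume $q = q(z)\,dz^2$ is holomorphic on $\mathbb{C}$ with a single pole of order $4$ at $\infty$. Then $q(z)$ is an entire function, and by Lemma \ref{lem:gb} applied with $g=0$ and one pole of order $4$, the total order of the zeros of $q$ equals $4g-4+4 = 0$; so $q$ has no zeros. An entire function with no zeros that is moreover a polynomial (it must be, since a single pole of order $4$ at $\infty$ forces $q(z) = O(z^{0})$ as $z\to\infty$) is a nonzero constant $c$. Equivalently, and without invoking Lemma \ref{lem:gb}: in the coordinate $w = 1/z$ near $\infty$ the differential reads $q(1/w)\,w^{-4}\,dw^2$, and for the pole at $w=0$ to have order exactly $4$ one needs $q(1/w)$ holomorphic and nonzero at $w=0$, i.e.\ $q$ constant.

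It remains to compute the residue of $c\,dz^2$ at $\infty$. Now $\sqrt q = \sqrt c\,dz$ is a genuine single-valued holomorphic $1$-form on $\mathbb{C}$, so for any simple closed curve $\gamma$ enclosing $\infty$ one has $\int_\gamma \sqrt q = \sqrt c \int_\gamma dz = 0$ by Cauchy's theorem. Therefore the residue at $\infty$ vanishes, and transporting back along the Möbius transformation shows the residue at $p$ is $0$.

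I do not expect a real obstacle here; the only two points that need a line of care are (i) the conformal invariance of the residue, which legitimizes the reduction to a pole at $\infty$, and (ii) the assertion that a single order-$4$ pole forces $q$ to be a nonzero constant — which is exactly where either Lemma \ref{lem:gb} or the one-line computation in the coordinate $w=1/z$ does the work.
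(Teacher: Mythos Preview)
Your proof is correct and takes essentially the same approach as the paper. The paper places the pole at $z=0$, subtracts the leading term $\psi = c_4 z^{-4}\,dz^2$ (which is globally defined on $\hat{\mathbb{C}}$), and observes that $q-\psi$ has a single pole of order at most $3$ and hence must vanish; you instead normalize the pole to $\infty$ and reach the equivalent normal form $q = c\,dz^2$ via a growth/Liouville argument, after which the residue computation is identical.
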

\begin{proof}
Let $q$ be such a meromorphic quadratic differential, so it is 
\begin{equation*}
q  = (\frac{c_4}{z^4} + \frac{c_3}{z^3} + \cdots)dz^2
\end{equation*}
in the usual coordinates in an affine chart. 
However in $\hat{\mathbb{C}}$ we also have the quadratic differential 
\begin{equation*}
\psi = \frac{c_4}{z^4}dz^2
\end{equation*}
The quadratic differential  $q-\psi$ then has a single pole of order less than or equal to $3$. There is no such non-zero quadratic differential on $\hat{\mathbb{C}}$, and hence $q =\psi$, and has residue $0$.
\end{proof}

\section{Outline of the proof}

We illustrate the proof of Theorem \ref{thm:main}  in the case of a single puncture. This easily generalizes to the case of multiple poles - in \S12 we shall provide a summary. \\

Throughout, we shall fix a Riemann surface $\Sigma$ of genus $g$, with a marked point $p$ with a disk neighborhood $U$, and an integer $n\geq 4$ and $a,c\in \mathbb{R}^{+}$. Our goal is to show there exists a  conformal homeomorphism 
\begin{equation*}
g:\Sigma\setminus p \to \Sigma_{n,a}
\end{equation*}
where $\Sigma_{n,a}$ is a half-plane surface such that the half-plane differential has one pole of order $n$ and residue $a$. Furthermore, via the uniformizing chart 
\begin{equation*}
\phi:U\to \mathbb{D}
\end{equation*}
taking $p$ to $0$ the pullback quadratic differential on $\mathbb{D}$ has a pole at $0$ with leading order term $c$.\\

\medskip

Briefly, the argument consists of producing a sequence of half-plane surfaces (\textit{Steps 1} and \textit{2}) that converge to a surface conformally equivalent to $\Sigma \setminus p$ (\textit{Step 3}) and metrically a half-plane surface (\textit{Steps 4} and \textit{5}). The bulk of the proof lies in proving the latter convergence, after first showing that one has sufficient geometric control on the surfaces along the sequence.\\

\textbf{Step 1.} \textit{(Quadrupling)} We define a suitable compact exhaustion $\{\Sigma_i\}$  of $\Sigma\setminus p$, and by a two-step conformal doubling procedure along boundary arcs  we define a corresponding sequence of compact Riemann surfaces $\hat{\Sigma_i}$. An application of the Jenkins-Strebel  theorem then produces certain holomorphic quadratic differentials on these surfaces which on passing back to $\Sigma_i$ again by the involutions gives singular flat structures with ``polygonal" boundary.\\

\textbf{Step 2.} \textit{(Prescribing boundary lengths)}  We complete each of these singular flat surfaces with polygonal boundary to a half-plane surface $\Sigma_i^\prime$  by gluing in an appropriate planar end. We first show that by choosing the arcs in the first doubling step appropriately, one can ensure that the sequence of planar ends one needs are truncations at height $H_i\to \infty$ of a fixed planar end $\mathcal{P}$. Here $H_i$ is a sequence of real numbers diverging at a prescribed rate. This is the geometric control crucial for the convergence in \textit{Step 4}.\\

\textbf{Step 3.}\textit{ (Conformal limit)} Applying a  quasiconformal extension result  we show that these half-plane surfaces $\Sigma_i^\prime$  have $\Sigma\setminus p$ as a conformal limit.\\

\textbf{Step 4.}\textit{ (The quadratic differentials converge)} We now show that the half-plane differentials corresponding to $\Sigma_i^\prime$ satisfy a convergence criterion (see Appendix A) and hence after passing to a subsequence they converge to a meromorphic quadratic differential with the right order and residue. \\

\textbf{Step 5.} \textit{(A limiting half-plane surface)} We show that the limiting quadratic differential  in \textit{Step 4} is in fact a half-plane differential, that is, the sequence of half-plane surfaces limits to a half-plane surface $\Sigma_{n,a}$. By \textit{Step 3}, this surface is conformally $\Sigma \setminus p$, as required.\\

\textbf{Step 6.}\textit{ (The leading order coefficient)} With a final analytical lemma we show that an additional control on the sequence $H_i\to \infty$ in \textit{Step $2$} ensures that the limiting half-plane differential has leading order term $c$ on $U$, as required.

\section{Step 1: A quadrupling procedure}

 \subsection*{The compact exhaustion}
 
Consider the  neighborhood $U$ of $p\in \Sigma$ with the conformal chart $\phi:U\to \mathbb{D}$ such that $\phi(p) = 0$. Let $B(r) \subset \mathbb{D}$ denote the open disk of radius $r$ centered at $0$, and let $U(r) \subset U$ denote the inverse image $\phi^{-1}(B(r))$.\\

Define $\Sigma_i$ to be Riemann surface $\Sigma \setminus U(2^{-i})$. For convenience we shall denote $U(2^{-i})$ by $U_i$. Note that this a compact Riemann surface with boundary $C_i = \partial \Sigma_i = \partial  \overline{U(2^{-i})}$.\\

\begin{figure}[h!]
  \centering
  \includegraphics[scale=0.85]{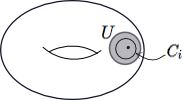}\\
  \caption{$C_i$ is the inverse image of a circle of radius $2^{-i}$ under the conformal chart $\phi:U\to \mathbb{D}$. }
  \end{figure}

The subsurfaces $\{\Sigma_i\}_{i=1}^{\infty}$ form a compact exhaustion. In particular, we note the following: \\
(1) $\Sigma_i\subset \Sigma_{i+1}$ for each $i\geq 1$.\\
(2) $\bigcup\limits_{i=1}^\infty \Sigma_i= \Sigma \setminus p$.\\
(3) $U_i = \Sigma \setminus \Sigma_i$ is a topological disk containing $p$, and\\
(4) $\Sigma_{i}\setminus \Sigma_{1}$ is a topological annulus of modulus $A\cdot i$ for some constant $A>0$.

\subsection*{Conformal doubling}

For each subsurface in the compact exhaustion constructed in the previous section, we shall now define a two-step doubling across a collection of $n$ arcs to get a sequence of compact surfaces $\widehat{\Sigma_i}$.\\

For each $i\geq 1$  consider the homeomorphism $\phi_i:\overline{U_i} \to \overline{B(2^{-i})}$ that is the restriction of the conformal chart $\phi$. Choose a collection of $n$ arcs on the boundary $\partial \overline{B(2^{-i})}$ and pull it back via $h_i^{-1}$ to a collection of arcs $a_1,a_2,\ldots a_n$ on $C_i = \partial \Sigma_i$. Note that the complement of these arcs on $C_i$ is another collection of $n$ arcs which we denote by $b_1,b_2,\ldots b_n$. In \textit{Step 2} (following section) we shall specify more about the choice of these arcs.
\\

Consider now two copies of the surface $\Sigma_i$ with the collections of $a$ and $b$ arcs on its boundary. Passing to the unit disc $\mathbb{D}$ via the conformal chart $h$, glue the $b$ arcs together via the anti-conformal map $z\mapsto 2^{-2i}/\bar{z}$ (this preserves the circle of radius $2^{-i}$). We get a doubled surface $\Sigma^d_i$ with a conformal structure, which has $n$ slits corresponding to the $a$ arcs on the boundary of each half (which remain unglued in our doubling). \\

\begin{figure}[h]
  \centering
  \includegraphics[scale=0.55]{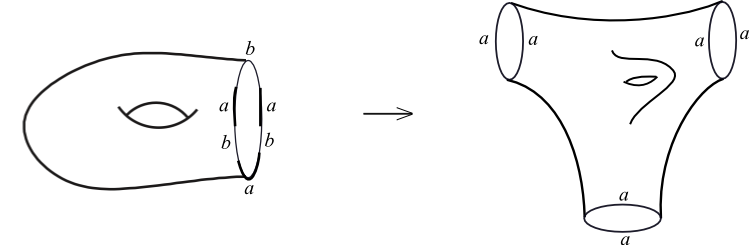}\\
  \caption{We first double $\Sigma_i$ (shown on the left) along the $b$-arcs on the boundary to get $\Sigma^d_i$ (shown on the right). }
  \end{figure}

Next, we take two copies of this resulting surface $\Sigma_d^i$ and glue them along these slits to get a \textit{closed} Riemann surface $\widehat{\Sigma_i}$. This time the conformal gluing is via a suitable restriction of the hyperelliptic involution of a genus $n$ surface branced over $n$ equatorial slits on $\hat{\mathbb{C}}$ (the restriction is to a collar neighhborhood on one side of the equator). \\

Note that the glued pairs of $a$-slits form a collection of $n$ nontrivial homotopy classes of curves $[\gamma_1],[\gamma_2],\ldots [\gamma_n]$ on the surface  $\widehat{\Sigma_i}$.\\

  \begin{figure}[h]
  \centering
  \includegraphics[scale=0.5]{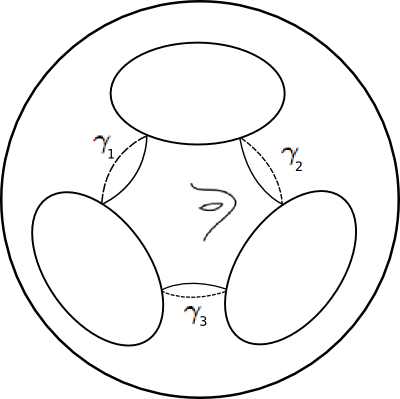}\\
  \caption{In the second step one glues two copies of $\Sigma_d^i$ (see Figure 12) to get the closed, ``quadrupled" surface $\widehat{\Sigma_i}$. (Not all the handles are shown in the figure.) }
  \end{figure}

We also have a pair of anticonformal involutions  $j^1_i$ and $j^2_i$ , where $j_i^1:  \Sigma^d_i \to  \Sigma^d_i$ is the deck translation of the branched double covering $\pi^1_i:  \Sigma^d_i \to  \Sigma_i$, and $j^2_i:\widehat{\Sigma_i}\to \widehat{\Sigma_i}$ that similarly commutes with the branched double covering $\pi^2_i:  \widehat{\Sigma_i} \to  \Sigma_d^i$.

\subsection*{Rectangular surfaces}

By the theorem of Jenkins-Strebel,  on each surface $\widehat{\Sigma_i}$ we have a holomorphic quadratic differential $\hat{q_i}$ which induces a singular-flat metric comprising $n$ euclidean cylinders of circumference $(2H_i, 2H_i,\ldots 2H_i+2a)$ with core curves $[\gamma_1],[\gamma_2],\ldots [\gamma_n]$, where 
\begin{equation}\label{eq:hi}
H_i=\left(H_0\cdot 2^i\right)^{n/2}
\end{equation}
for a choice of a $H_0>0$ that shall be eventually made in Proposition \ref{prop:deriv}.\\

(The reason for choosing $H_i$ to be of the above form shall be clarified by Lemma \ref{lem:uhi}).\\

We now show that this passes down to a singular flat metric on $\Sigma_i$ with a ``rectangular" structure when we quotient back by the anticonformal involutions $j^1_i$ and $j^2_i$.

\begin{lem}\label{lem:diam}
Let $X$ be a Riemann surface with a holomorphic quadratic differential $q$, and let $j:X\to X$ be an anti-conformal involution that is pointwise identity on a connected analytic arc $\gamma$. Then $\gamma$ is either completely horizontal or completely vertical in the quadratic differential metric.
\end{lem}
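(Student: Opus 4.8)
The plan is to work in a local conformal coordinate near a point of $\gamma$ where $q$ is nonvanishing, change to a natural coordinate $\zeta$ in which $q = d\zeta^2$ (via the integral (\ref{eq:choc})), and analyze what the anti-conformal involution $j$ must look like there. Since $j$ is anti-conformal and fixes $\gamma$ pointwise, it must be, in a suitable natural coordinate, the complex conjugation $\zeta \mapsto \overline{\zeta}$ followed by at most a translation and a real scaling — but more importantly, $j^\ast q = \overline{q}$ because $q$ is a holomorphic differential of type $(2,0)$ and $j$ reverses orientation. This is the key structural fact: pulling back $q(z)\,dz^2$ by an anti-holomorphic map replaces $q(z)$ by $\overline{q(\bar{z}\text{-expression})}$ and $dz^2$ by $d\bar z^2$, so the horizontal foliation of $j^\ast q$ is the \emph{image under $j$} of the horizontal foliation of $q$.

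The key steps, in order: First, reduce to a neighborhood of a generic (non-critical) point $z_0 \in \gamma$; since $\gamma$ is a connected analytic arc, if the claim holds on a dense open subset it holds on all of $\gamma$ by continuity/analyticity, and the critical points of $q$ along $\gamma$ are isolated (Lemma \ref{lem:gb}, and finiteness of saddle connections). Second, pass to the natural coordinate $\zeta$; in this coordinate $q = d\zeta^2$, the horizontal lines are $\mathrm{Im}\,\zeta = \text{const}$, and the hypothesis says $j$ fixes the analytic arc $\tilde\gamma$ that is the image of $\gamma$. Third, observe that $j$ anti-holomorphic implies $j(\zeta) = \overline{g(\zeta)}$ for some holomorphic $g$; the involution property and the fact that $j$ fixes $\tilde\gamma$ pointwise force, after possibly adjusting the natural coordinate by a constant, $g$ to be an affine map with real coefficients whose fixed line is $\tilde\gamma$ — equivalently $j$ is reflection across the line containing $\tilde\gamma$. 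Fourth, for $j$ to be well-defined as an isometry of the $q$-metric (equivalently, for $j^\ast q = \pm q$ up to the type change making it an isometry), the reflection must send horizontal lines to horizontal lines or to vertical lines; a Euclidean reflection does this only if its axis is horizontal or vertical. Hence $\tilde\gamma$, and therefore $\gamma$, is horizontal or vertical.

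Actually, the cleanest route avoids invoking "well-definedness" as an extra hypothesis: since $j$ genuinely is an (anti-conformal) automorphism of $X$ preserving $q$ only up to the natural type-conjugation, one computes directly that $j^\ast q = \overline{q}$ as sections (in the sense above), so the horizontal foliation $\mathcal{F}_h(q)$ is $j$-invariant as an unmeasured foliation. Restricting to the natural coordinate, $j$ is a Euclidean reflection fixing $\tilde\gamma$ pointwise, and a Euclidean reflection preserves the foliation by horizontal lines precisely when its axis is horizontal or vertical (a reflection across a line making angle $\theta$ with the horizontal sends the horizontal foliation to the foliation by lines of slope $\tan 2\theta$, which equals the horizontal or vertical foliation only if $2\theta \in \pi\mathbb{Z}$). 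Therefore the axis — hence $\gamma$ — is horizontal or vertical. One then notes the dichotomy is global along $\gamma$ by connectedness: the "slope" of $\gamma$ in natural coordinates is locally constant on the non-critical locus, and $\gamma$ cannot switch from horizontal to vertical at a zero since near a zero of order $m$ the arc would have to be a single emanating horizontal (resp.\ vertical) ray to be fixed pointwise by an anti-conformal involution.

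The main obstacle I expect is the behavior at the critical points (zeroes of $q$) lying on $\gamma$: one must check that $\gamma$ passing through a zero is still forced to be entirely horizontal or entirely vertical, i.e.\ that the "type" of $\gamma$ cannot change across a zero. Near a zero of order $m$, the natural coordinate is $m$-to-$1$ branched, and an anti-conformal involution fixing an analytic arc through the zero pointwise must, in the branched picture, be a reflection whose axis passes through the branch point; the fixed arc downstairs is then the image of a diameter, which is a union of two opposite prongs of the horizontal (or vertical) spider, so it is of a single type. Making this rigorous requires a short local analysis of anti-conformal involutions near cone points, but it is routine; everything else is a direct coordinate computation.
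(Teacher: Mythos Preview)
Both your proposal and the paper's proof rest on the claim $j^\ast q = \bar{q}$ (you write ``one computes directly that $j^\ast q = \overline{q}$''; the paper asserts $(j^\ast q)(v,v) = \overline{q(v,v)}$), and both attribute this to $j$ being anti-conformal. That inference is incorrect: anti-conformality only tells you that $j^\ast q$ is a section of $\overline{K}^{\otimes 2}$, not that it equals $\bar q$. A concrete counterexample: on the rectangular torus $\mathbb{C}/(\mathbb{Z}+i\mathbb{Z})$ take $j(z)=\bar z$ and $q=e^{i\theta}\,dz^2$ with $\theta\notin\pi\mathbb{Z}$; then $j$ is an anti-conformal involution fixing the image of $\mathbb{R}$ pointwise, yet that arc is neither horizontal nor vertical for $q$. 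So the lemma is false as stated; the missing hypothesis is $\overline{j^\ast q}=q$, which in the paper's application does hold, by uniqueness of the Jenkins--Strebel differential under the symmetry $j$. Your phrase ``preserving $q$ only up to the natural type-conjugation'' gestures toward this but does not establish it, and your earlier idea of invoking ``$j$ is an isometry of the $q$-metric'' is likewise not a consequence of the stated hypotheses.

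Granting the missing hypothesis, your route is more circuitous than the paper's. The paper argues pointwise: for $v$ tangent to $\gamma$ one has $j_\ast v=v$, hence $q(v,v)=q(j_\ast v,j_\ast v)=(j^\ast q)(v,v)=\overline{q(v,v)}$, so $q(v,v)\in\mathbb{R}$; in the natural coordinate this says $\gamma$ is locally horizontal or vertical, and connectedness gives a single type throughout. Your detour through Euclidean reflections preserving the horizontal foliation reaches the same conclusion with more apparatus, and your separate local analysis at zeros is unnecessary once you note the pointwise argument already works on the dense set of non-critical points of $\gamma$.
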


\begin{proof}

Let $p\in \gamma$, and let $v\in T_pX$ be the tangent vector to $\gamma$ at $p$. Since $\gamma$ is fixed pointwise by $j$, the induced map $j_\ast:T_pX\to T_pX$ satisfies $j_\ast(v)=v$.\\

Since $j$ is anticonformal, we have that the pullback quadratic differential $j^\ast q$ satisfies:
\begin{equation}\label{eq:eq1}
j^\ast q (v,v) = \overline{ q (v,v) }
\end{equation}
where $\bar{\alpha}$ denotes the complex-conjugate of a complex number $\alpha$.\\

On the other hand,  by definition of the pullback we have:
\begin{equation}\label{eq:eq2}
j^\ast q (v,v) = q(j_\ast v,j_\ast v) = q(v,v)
\end{equation}
where we used the fact that $j_\ast$ fixes $v$.\\

By (\ref{eq:eq1}) and (\ref{eq:eq2}) we have that $\overline{q(v,v)}=q(v,v)$ and hence $q(v,v)\in \mathbb{R}$.\\

Consider the local coordinate around any point  $p\in \gamma$ in which the quadratic differential $q$ is $dz^2$. By the previous observation and the fact that $\gamma$ is analytic,  $\gamma$ is locally either a horizontal or vertical segment around the image of $p$. Since $\gamma$ is connected, this is true of the entire arc.
\end{proof}

We apply this lemma to the involutions $j^1_i$ and $j^2_i$. First, the anticonformal involution $j^2_i:\widehat{\Sigma_i}\to \widehat{\Sigma_i}$ fixes the $n$ $a$-slits and hence they are either completely horizontal or vertical. Since they are homotopic to the core curves of the cylinders in the  $\hat{q_i}$-metric, they must in fact be completely horizontal. Next, the anticonformal involution $j_i^1:  \Sigma^d_i \to  \Sigma^d_i$ fixes the $b$-arcs on $\Sigma^d_i$. Since these arcs embed in $\widehat{\Sigma_i}$ as transverse arcs across the cylinders in the $\hat{q_i}$-metric, they must be completely vertical.\\

Hence the holomorphic quadratic differential $\hat{q_i}$ passes down to a holomorphic quadratic differential $q_i$ on the bordered surface $\Sigma_i$. The $n$  $a$-arcs on  $\partial \Sigma_i$ become horizontal segments, and the remaining $n$ $b$-arcs on $\partial \Sigma_i$ are vertical segments. These form a polygonal boundary. The $n$ euclidean cylinders on $\widehat{\Sigma_i}$  descend to a cyclically-ordered collection of $n$ euclidean rectangles on $\Sigma_i$ glued along the critical graph $\mathcal{G}_i$. In the  $q_i$-metric on  $\Sigma_i$, the horizontal width of these rectangles are $(H_i, H_i\ldots H_i+a)$ in a cyclic order (see Figure 14).

\begin{figure}[h]
  \centering
  \includegraphics[scale=0.55]{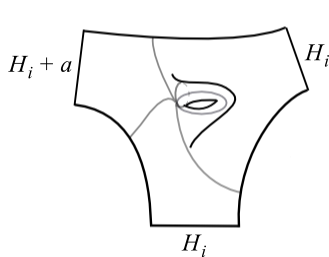}\\
  \caption{The rectangular surface at the end of \textit{Step 1}.}
  \end{figure}

\section{Step 2: Prescribing lengths}
In \textit{Step 1}, the Jenkins-Strebel theorem allows us to prescribe the circumferences of the resulting metric cylinders on the ``quadrupled" surface $\widehat{\Sigma_i}$, but not their lengths. On quotienting back by the two involutions, this results in control on the lengths of the ``horizontal"  sides of the polygonal boundary of the resulting ``rectangular" surface (see figure above). We show here, using a continuity method, that choosing the arcs carefully in the conformal doubling step ensures that the extremal lengths of the curves obtained from the doubled arcs on $\widehat{\Sigma_i}$ are appropriate values (Lemma \ref{lem:arcs}) such that the vertical edge-lengths are also prescribed (Lemma \ref{lem:cyllen}).\\

We shall use the following:  

\begin{lem}[A topological lemma]\label{lem:deglem}
Let $\phi:\mathbb{R}^n_{>0} \to \mathbb{R}^n_{>0}$ be a proper, continuous map mapping 
\begin{equation*}
(x_1,x_2,\ldots ,x_n) \mapsto (y_1,y_2,\ldots y_n)
\end{equation*}
Suppose there exists functions $\eta_1,\eta_2$ such that:\\
(1)  $x_i>A \implies y_i>\eta_1(A)$, and\\
(2)  $x_i<\epsilon \implies y_i< \eta_2(\epsilon)$,\\
 for each $1\leq i\leq n$, where $\eta_1(A) \to \infty$ as $A\to \infty$, and $\eta_2(\epsilon)\to 0$ as $\epsilon\to 0$.\\
Then $\phi$ is surjective.\\
 (Note: here $\mathbb{R}_{>0} = \mathbb{R}^{+}$ denotes the positive real numbers.)
\end{lem}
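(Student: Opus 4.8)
The plan is to reduce this to a degree computation. Passing to logarithmic coordinates, the diffeomorphism $(x_1,\dots,x_n)\mapsto(\log x_1,\dots,\log x_n)$ conjugates $\phi$ to a proper continuous map $\Phi:\mathbb{R}^n\to\mathbb{R}^n$, and surjectivity of $\phi$ is equivalent to surjectivity of $\Phi$. Hypotheses (1) and (2) become: for each $i$ there are functions $\mu_1,\mu_2$ with $\mu_1(T)\to+\infty$ as $T\to+\infty$ and $\mu_2(T)\to-\infty$ as $T\to-\infty$, such that $t_i>T\Rightarrow\Phi(t)_i>\mu_1(T)$ and $t_i<T\Rightarrow\Phi(t)_i<\mu_2(T)$ (one may take $\mu_1(T)=\log\eta_1(e^T)$ and $\mu_2(T)=\log\eta_2(e^T)$). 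Since $\Phi$ is proper it extends to a continuous self-map $\bar\Phi$ of the one-point compactification $S^n$ with $\bar\Phi(\infty)=\infty$, and so has a well-defined degree; it suffices to show this degree equals $1$, since a self-map of $S^n$ of nonzero degree is surjective (a non-surjective one maps into a contractible $S^n\setminus\{p\}$, hence is nullhomotopic and of degree $0$), and, as $\bar\Phi(\infty)=\infty$, surjectivity of $\bar\Phi$ onto $S^n$ yields surjectivity of $\Phi$ onto $\mathbb{R}^n$.

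To compute the degree I would use the straight-line homotopy $H(t,u)=(1-u)\Phi(t)+u\,t$ from $\Phi$ to the identity, and the key step is to verify that $H:\mathbb{R}^n\times[0,1]\to\mathbb{R}^n$ is proper. Given a sequence $(t^{(k)},u_k)$ with $|t^{(k)}|\to\infty$, pass to a subsequence so that the coordinate $i$ of maximal modulus is fixed and $t^{(k)}_i$ has constant sign, say $t^{(k)}_i\to+\infty$ (the other case is symmetric, using the $\mu_2$ bound). Then $|t^{(k)}_i|\ge n^{-1/2}|t^{(k)}|\to\infty$, so by the first bound $\Phi(t^{(k)})_i\to+\infty$; since the $i$-th coordinate of $H(t^{(k)},u_k)$ is a convex combination of $t^{(k)}_i$ and $\Phi(t^{(k)})_i$, it is $\ge\min\bigl(t^{(k)}_i,\Phi(t^{(k)})_i\bigr)\to+\infty$, and hence $|H(t^{(k)},u_k)|\to\infty$. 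Thus the $H$-preimage of any compact set is bounded, and being closed it is compact, so $H$ is proper. Consequently $H$ extends to a continuous homotopy of self-maps of $S^n$ fixing $\infty$, so $\bar\Phi\simeq\mathrm{id}_{S^n}$, $\deg\bar\Phi=1$, and $\Phi$ (hence $\phi$) is surjective.

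I expect the only genuine obstacle to be the properness of $H$: one must see that the mechanism ``some coordinate escapes to $\pm\infty$ with the correct sign,'' which is precisely what (1)--(2) supply, survives taking a convex combination with the identity, uniformly in the homotopy parameter $u$, and that the index of the escaping coordinate can be pinned down by passing to a subsequence. Everything after that is standard degree theory. (In fact the same estimate shows properness of $\phi$ is already forced by (1) and (2) — the preimage of a compact subset of $\mathbb{R}^n_{>0}$ is bounded away from the coordinate hyperplanes and from infinity — so it need not be assumed separately, though it is harmless to take it as given.)
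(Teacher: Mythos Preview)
Your argument is correct and follows essentially the same approach as the paper, which gives only a sketch (``standard topological degree argument \ldots\ degree $1$ at infinity''). You have supplied the details the paper omits --- the logarithmic change of coordinates, the straight-line homotopy to the identity, and the verification of its properness using the coordinate-wise control --- so this is a fleshed-out version of the intended proof rather than a different route.
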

\begin{proof}[Sketch of a proof]
The proof is a standard topological degree argument. Note that (1) and (2) are equivalent to the requirement that $\phi$, in addition to being proper, has a coordinate-wise control: for a sequence of points $(x_1^j,x_2^j,\ldots x_n^j)$ and $\phi$-images $(y_1^j,y_2^j,\ldots y_n^j)$ (where $j\in\mathbb{N}$) we have that fixing $1\leq i\leq n$, $x_i^j\to\infty \implies y_i^j\to \infty$ and $x_i^j\to 0 \implies y_i^j\to 0$ \textit{uniformly} (independent of the rest of the coordinates). This implies that $\phi$ has degree $1$ at infinity, and hence $\phi$ is surjective.
\end{proof}

\subsection*{Arcs and extremal lengths}
Consider a Riemann surface $\Sigma$ with one boundary component which we identify with $S^1$, and with $n$ sub-intervals $I_1 ,I_2,\ldots I_n$ of equal length. Each $I_i$ is further divided into two sub-arcs $\{a_i,b_i\}$ in clockwise order (see figure). As in the construction in \textit{Step 1}, consider the two step doubling that leads to a a closed Riemann surface: first double along the $b$ arcs on $\partial \Sigma$ to get a Riemann surface $\Sigma_d$ with slits corresponding to the $a$- arcs, and next, glue two copies of $\Sigma_d$ along these slits to get a closed surface $\widehat{\Sigma}$.\\

 We shall work with the doubled surface $\Sigma_d$, and homotopy classes of curves $\gamma_i$ that enclose each of the slits $a_i$, for $1\leq i\leq n$. Let their extremal lengths be $\lambda_1,\lambda_2,\ldots \lambda_n$ - note that these values are exactly double of the extremal lengths of the corresponding closed curves one gets on $\widehat{\Sigma}$. We shall first show that by prescribing the $2$-arc decomposition of each interval $I_i$ appropriately, we can obtain any $n$-tuple of extremal lengths.\\
 
Consider the subinterval $I_i =  a_i\cup b _i$. We denote the (angular) length of a subarc $\tau$ on $\partial \Sigma \equiv S^1$ by $l(\tau)$. Note that $l(I_i) = \frac{2\pi}{n}$ for each $i$. For convenience, we shall fix a conformal metric $\rho$ on $\Sigma$ that gives a length $2\pi$ to the boundary circle $\partial \Sigma$, and the above lengths of arcs shall be those induced by this metric. Denote the ratio of lengths $r_i = \frac{l(a_i)}{l(b_i)}$. Also, notice that each $a_i$ arc has two adjacent arcs $b_{i-1}$ and $b_i$ on either side (where $i-1$ is taken to be $n$ if $i=1$).

\begin{figure}[h]
  \centering
  \includegraphics[scale=0.6]{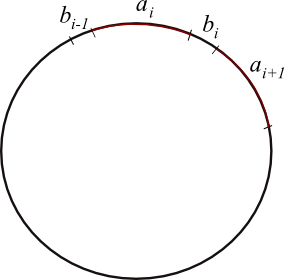}\\
  \caption{The $a$- and $b$-arcs on the boundary component of $\partial \Sigma$. The closed curve $\gamma_i$ goes around the $a_i$ slit on the surface doubled across the $b$-arcs, and has extremal length $\lambda_i$. }
\end{figure}

\begin{lem}\label{lem:arcs} The map $\phi:\mathbb{R}^n_{\geq 0}\to \mathbb{R}^n_{\geq 0}$ that assigns to a tuple $(r_1,r_2,\ldots r_n)$ of ratios of interval lengths, the corresponding extremal lengths $(\lambda_1,\lambda_2,\ldots,\lambda_n)$ is surjective.
\end{lem}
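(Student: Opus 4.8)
The plan is to apply the topological Lemma~\ref{lem:deglem} to the map $\phi$. Since the domain and target are $\mathbb{R}^n_{\geq 0}$ rather than $\mathbb{R}^n_{>0}$, I would first note that the interior $\mathbb{R}^n_{>0}$ (corresponding to all arcs $a_i,b_i$ having positive length, so that all the $\gamma_i$ are genuinely nontrivial) is where the real content lies; the boundary strata, where some $r_i=0$ or $r_i=\infty$, correspond to degenerate configurations (some slit $a_i$ collapses, or some $b_i$ collapses), and one checks by a direct extremal-length estimate that $\lambda_i\to 0$ in the former case and $\lambda_i\to\infty$ in the latter. So it suffices to verify the three hypotheses of Lemma~\ref{lem:deglem} for $\phi|_{\mathbb{R}^n_{>0}}$: continuity, properness, and the coordinate-wise control encoded by $\eta_1,\eta_2$.

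\textbf{Continuity.} As the ratios $r_i$ vary continuously, the $2$-arc decomposition of each $I_i$ on $\partial\Sigma\equiv S^1$ varies continuously, hence so does the marked conformal structure on the doubled surface $\Sigma_d$ (the doubling is performed by a fixed anticonformal reflection whose fixed locus moves continuously with the arcs). Extremal length of a fixed homotopy class is a continuous function on Teichm\"uller-type deformation spaces, so $\lambda_i$ depends continuously on $(r_1,\dots,r_n)$.

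\textbf{Coordinate-wise control.} This is where I expect the main work. Fix $i$. To see $r_i>A \implies \lambda_i>\eta_1(A)$ with $\eta_1(A)\to\infty$: the curve $\gamma_i$ encircles the slit $a_i$, and when $a_i$ is long relative to its two neighbours $b_{i-1},b_i$, any annulus in the homotopy class of $\gamma_i$ must "reach around" the long slit; using the standard fact that extremal length of $\gamma_i$ is the reciprocal of the modulus of the largest embedded annulus, together with a Gr\"otzsch / Teichm\"uller extremal-annulus comparison (an annulus separating a long slit from the rest of the surface has small modulus), one gets a lower bound on $\lambda_i$ that tends to $\infty$ as $l(a_i)/l(b_i)\to\infty$. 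Conversely, $r_i<\epsilon \implies \lambda_i<\eta_2(\epsilon)$ with $\eta_2(\epsilon)\to 0$: when $a_i$ is very short, there is a large round annulus (in the $\rho$-metric pushed to $\Sigma_d$) surrounding the tiny slit $a_i$ and staying away from the other slits, whose modulus blows up like $\log(1/\epsilon)$; this forces $\lambda_i\to 0$. Both bounds are uniform in the remaining coordinates because the comparison annuli can be taken in a fixed collar of $\partial\Sigma$ whose geometry does not depend on the other $r_j$'s (one only needs a definite amount of room, which is available since each $I_j$ has fixed length $2\pi/n$).

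\textbf{Properness.} A sequence $(r^{(k)})$ leaving every compact subset of $\mathbb{R}^n_{>0}$ has some coordinate $r^{(k)}_i\to 0$ or $r^{(k)}_i\to\infty$ (after passing to a subsequence); by the coordinate-wise control just established, the corresponding $\lambda^{(k)}_i\to 0$ or $\to\infty$, so $(\lambda^{(k)})$ also leaves every compact subset of $\mathbb{R}^n_{>0}$. Hence $\phi$ is proper. Lemma~\ref{lem:deglem} then applies and gives surjectivity of $\phi$ onto $\mathbb{R}^n_{>0}$; combined with the boundary-stratum analysis above, $\phi:\mathbb{R}^n_{\geq 0}\to\mathbb{R}^n_{\geq 0}$ is surjective.

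The hardest step is the coordinate-wise control, specifically making the extremal-length estimates \emph{uniform} in the other $n-1$ parameters; the clean way to do this is to exhibit the comparison annuli entirely within a fixed collar neighbourhood of $\partial\Sigma$ (before doubling) whose conformal geometry is independent of the arc decomposition, so that the estimates reduce to explicit moduli of standard round annuli around a short or a long boundary slit.
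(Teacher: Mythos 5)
Your proposal is correct and follows essentially the same route as the paper: establish continuity, verify the coordinate-wise estimates via extremal-annulus comparisons (the Teichm\"uller modulus bound for a long slit with a short gap to the neighbouring real point giving $\lambda_i\to\infty$, and a round annulus around a tiny slit giving $\lambda_i\to 0$), deduce properness, and invoke Lemma~\ref{lem:deglem}. The paper fills in exactly the details you flag as "the hardest step" — in particular it phrases the lower bound as $B=\eta(l(b_i)/l(a_i))$ with $\eta(0)=0$, and for the upper bound places a round annulus of inner radius $\sim l(a_i)$ and fixed outer radius $R$ in $\Sigma_d$ — but the structure of the argument is the same.
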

\begin{proof} The map $\phi$ is continuous since the moduli of the doubled surfaces depend continuously on the lengths of the slits (even if some slits degenerate to punctures). The lemma shall follow once we show that the following properties hold.\\

\textit{Notation.} In what follows we shall consider an arbitrary $(r_i)_{1\leq i\leq n}$ in $\mathbb{R}^n_{\geq 0}$ and its $\phi$-image $(\lambda_i)_{1\leq i\leq n}$. In (2) and (3) below, we consider a sequence $\{(r_i)^j\}$ of such $n$-tuples and their $\phi$-images $\{(\lambda_i)^j\}$, where the index $j$ runs from $1\leq j<\infty$ and shall be appended to any of the geometric quantities varying with $j$.\\

(1) $\lambda_i=0 \iff r_i=0$.\\
(2) $r_i^j\to \infty \implies \lambda_i^j\to \infty$. The divergence is uniform, that is $r_i>c \implies \lambda_i>\eta_1(c)$.\\
(3) $r_i<c \implies \lambda_i<\eta_2(c)$.\\
(Here $\eta_1,\eta_2:\mathbb{R}_{\geq 0} \to \mathbb{R}_{\geq 0}$ are increasing functions.)\\

Note that (1) and (2) imply that the map $\phi$ is proper, and from the uniform estimates in (2) and (3), the surjectivity of $\phi$ follows from Lemma \ref{lem:deglem}. \\

\textit{Property (1)}: The backward implication holds since $r_i=0\iff l(a_i)=0$ and hence the corresponding $a$-slit has degenerated to a puncture on $\Sigma_d$, and the extremal length $\lambda_i$ of a loop enclosing a puncture is $0$. For the other implication, observe if $l(a_i) \neq 0$ then for our choice of conformal metric $\rho$ we shall have $l_\rho(a_i) >\eta >0$, and a lower bound of $2\eta$ of the length of the curve around the $a_i$-slit. The analytic definition of extremal length:
\begin{equation*}
\lambda_i = \sup\limits_\rho \frac{l_\rho(a_i)^2}{A(\rho)}
\end{equation*}
then shows that there is a positive lower bound on the extremal length $\lambda_i$.\\

\textit{Property (2)}: Note that by definition, $r_i^j\to \infty \implies l(b_i^j)\to 0$. By the geometric definition of extremal length, 
\begin{equation}\label{eq:mod}
\lambda_i^j =\inf \frac{1}{mod(\mathcal{A})}
\end{equation}
where $\mathcal{A} \subset \Sigma_d$ is an embedded annulus  with core-curve enclosing the $a_i^j$-slit.\\
 It is well-known that there is a bound $B$ on the largest modulus annulus $\hat{\mathcal{A}}$ that can be embedded  in $\mathbb{C}$ such that the bounded complementary component contains the interval $[0, l(a_i^j)]$ and the other component contains the point $ l(a_i^j) + l(b_i^j) \in \mathbb{R}$. Moreover, $B\to 0$ as $l(b_i)\to 0$. In our case, the $a_i^j$-slit on $\Sigma_d$ is such an interval in an appropriate conformal chart, and the endpoint of the adjacent b-slit is the other real point. However since the annulus $\mathcal{A}$ is now constrained to be embedded in the surface $\Sigma_d$,  the modulus of $\mathcal{A}$ is less than that of $\hat{\mathcal{A}}$ (in the planar case). One can also see this by considering the annular cover associated with the closed curve where all the slits lie. From the above discussion, this proves that $\lambda_i^j\to \infty$ by (\ref{eq:mod}). \\

\textit{Property (2) continued}: 
The uniform divergence follows by quantifying the estimates in the argument above: the bound $B$ of the largest modulus of an annulus in $\mathbb{C}$ separating the interval $[0,a]$ from $a+b\in \mathbb{R}$ is in fact a strictly increasing function of $b/a$. That is,  $B = \eta(b/a)$ where $\eta:\mathbb{R}_{\geq 0} \to \mathbb{R}_{\geq 0}$ is a continuous function such that $\eta(0) = 0$.  The above argument then shows that 
\begin{equation*}
r_i>c \implies \frac{l(b_i)}{l(a_i)} < 1/c \implies B < \eta(1/c) \implies \lambda_i > \eta(1/c)^{-1}
\end{equation*}
where we have used (\ref{eq:mod}) for the last inequality. Hence the uniform estimate holds with the function $\eta_1(x) := \eta(1/x)^{-1}$.\\

\textit{Property (3)}: This follows from closely examining the argument of the backward implication in \textit{Property (1)}: if $r_i<c$ then by definition $l(a_i)< \frac{2\pi c}{n(1+c)}$ and hence for sufficiently small $c$ one can embed an annulus of inner radius $\frac{2\pi c}{n(1+c)}$ and outer radius $R$ (that depends only on $\rho$ and $\Sigma$) on the doubled surface $\Sigma_d$. The modulus of this annulus is $M(c)$ which tends to $\infty$ as $c\to 0$. By the geometric definition of extremal length (\ref{eq:mod}), $\lambda_i$ is less than $1/M(c)$.

\end{proof}

\textit{Remark.} A similar setup involving slits along subintervals of the real line in $\hat{\mathbb{C}}$ was considered in \cite{Penner}.

\subsection*{Cylinder lengths}

As in \S6, an application of the Jenkins-Strebel theorem to the homotopy classes of curves (corresponding to the doubled $a$-slits) on the ``quadupled" surface $\widehat{\Sigma}$ produces a quadratic differential metric with a decomposition into $n$ metric cylinders $C_1,C_2,\ldots C_n$ with these as the core curves. By choosing the $a$-arcs on $\partial \Sigma$ appropriately, by Lemma \ref{lem:arcs} these curves can have assume any $n$-tuple of extremal lengths.  We now show that by prescribing these extremal lengths correctly, one can assume any $n$-tuple of cylinder lengths $\{l_i\}_{1\leq i\leq n}$.  Note that the Jenkins-Strebel theorem already allows one to prescribe arbitrary circumferences.

\begin{lem}\label{lem:cyllen}
Suppose one fixes each cylinder circumference to be $H$. Then the map $\psi:\mathbb{R}^n_{>0}\to \mathbb{R}^n_{> 0}$ that assigns to a tuple $(\lambda_1,\lambda_2,\ldots \lambda_n)$ of extremal lengths of the core curves, the reciprocals of the cylinder lengths $(1/l_1,1/l_2,\ldots,1/l_n)$, is surjective.
\end{lem}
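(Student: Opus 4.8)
\medskip

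The plan is to prove surjectivity of $\psi$ by the same mechanism as Lemma~\ref{lem:arcs}: one checks that $\psi$ is continuous and that it satisfies the uniform coordinate-wise estimates required by the topological Lemma~\ref{lem:deglem} (which, as in the proof of Lemma~\ref{lem:arcs}, also force properness), and then applies that lemma. (As there, it is cleanest to regard $\psi$ as a map on the space of arc-data $(r_1,\dots,r_n)$, by composing the assignment $(r_i)\mapsto(\lambda_i)$ of Lemma~\ref{lem:arcs} with $(\lambda_i)\mapsto(1/l_i)$.) Continuity follows from the continuous dependence of the Jenkins--Strebel differential $\widehat q$ on the conformal structure of $\widehat\Sigma$ — hence of the cylinder heights $l_i$ — together with the continuous dependence of that conformal structure on the arc-data. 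So everything reduces to the two asymptotic estimates: that $\lambda_i\to\infty$ forces $1/l_i\to\infty$, and that $\lambda_i\to 0$ forces $1/l_i\to 0$, each uniformly in the remaining coordinates.

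The first estimate is elementary. In the $\widehat q$-metric the cylinder $C_i$ is an embedded annulus with core curve in the class $\gamma_i$ and modulus $l_i/H$ (circumference $H$, height $l_i$); since the extremal length of $\gamma_i$ is at most the reciprocal of the modulus of any such annulus, $\lambda_i\le H/l_i$, i.e.\ $1/l_i\ge\lambda_i/H$. Thus $\lambda_i>c\implies 1/l_i>c/H$, which gives the uniform divergence of $1/l_i$ as $\lambda_i\to\infty$, with $\eta_1(c)=c/H$.

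The second estimate is the heart of the matter: a small $\lambda_i$ means $\gamma_i$ carries a very wide conformal collar, and a wide collar should force a long Jenkins--Strebel cylinder. Concretely, when $\lambda_i$ is small one takes the maximal embedded annulus $\widehat A_i$ about $\gamma_i$, of modulus $1/\lambda_i$; after an isotopy it may be assumed disjoint from every other core curve $\gamma_k$, and then, since $\gamma_k\not\simeq\gamma_i$, each component of $\widehat A_i\cap C_k$ is a simply connected (``rectangular'') piece of $\widehat A_i$, while the critical graph contributes no $\widehat q$-area. A length--area estimate on $\widehat A_i$ — every core circle has $\widehat q$-length at least the systole $H$ — gives $\mathrm{Area}_{\widehat q}(\widehat A_i)\ge H^2/\lambda_i$, while from the cylinder side $\mathrm{Area}_{\widehat q}(\widehat A_i\cap C_i)\le Hl_i$. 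The remaining step, which I expect to be the main obstacle, is to show that the incursions $\widehat A_i\cap C_k$ with $k\neq i$ cannot soak up all of this area — equivalently, that they contribute only a bounded amount to the \emph{radial} extent of $\widehat A_i$, controlled by $H$ and $l_i$ alone and not by the possibly large heights $l_k$. Granting this, one obtains $Hl_i\ge c_0 H^2/\lambda_i-C_1$ with $c_0,C_1$ depending only on the topological type of $\widehat\Sigma$, so $l_i\to\infty$ uniformly as $\lambda_i\to 0$ and one may take $\eta_2$ with $\eta_2(\epsilon)\to 0$. (An alternative, softer route is to note that $\lambda_i\to 0$ forces $\widehat\Sigma$ out of every compact subset of its moduli space and to invoke the behaviour of Jenkins--Strebel differentials under pinching of $\gamma_i$; the length--area route is preferable, however, since it produces explicit functions $\eta_1,\eta_2$.) With both estimates in place the hypotheses of Lemma~\ref{lem:deglem} (including properness) are met, and $\psi$ is surjective.
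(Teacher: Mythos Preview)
Your Property (1) argument is exactly the paper's: the Jenkins--Strebel cylinder $C_i$ is itself an embedded annulus of modulus $l_i/H$, so $\lambda_i\le H/l_i$ and one may take $\eta_1(c)=c/H$.

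For Property (2), however, you have identified but not closed the gap, and the paper takes a completely different route. Your length--area bound $\mathrm{Area}_{\widehat q}(\widehat A_i)\ge H^2/\lambda_i$ is correct, but the step you yourself flag as ``the main obstacle'' --- controlling the area of the incursions $\widehat A_i\cap C_k$ for $k\ne i$ independently of the possibly huge heights $l_k$ --- is not supplied, and it is not clear it can be in any direct way: nothing in your setup prevents $\widehat A_i$ from sending long simply-connected fingers deep into a neighbouring cylinder, absorbing most of the area without forcing $l_i$ to grow. The ``softer'' degeneration argument you mention does not obviously give the \emph{uniform} estimate (independent of the other $\lambda_k$) that Lemma~\ref{lem:deglem} requires.

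The paper instead proves the contrapositive directly: assuming $l_i<B$, it constructs an explicit simple closed curve $\beta_i$ with $i(\gamma_i,\beta_i)\in\{1,2\}$ and $\mathrm{Ext}(\beta_i)<D(B)$, and then invokes $\mathrm{Ext}(\gamma_i)\,\mathrm{Ext}(\beta_i)\ge i(\gamma_i,\beta_i)^2$ to bound $\lambda_i$ away from $0$. The curve $\beta_i$ is built in the flat metric: by pigeonhole some adjacent cylinder $C_k$ shares a boundary segment of length at least $H/n$ with $C_i$, and one treats the two cases $l_k\le B$ and $l_k>B$ separately, in each case exhibiting a curve of bounded length with a collar of definite width (hence bounded extremal length). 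This construction sidesteps entirely the problem of how $\widehat A_i$ sits inside the other cylinders, and gives $\eta_2$ implicitly from $D(B)$.
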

\begin{proof}
The surjectivity of $\psi$ shall follow from Lemma \ref{lem:deglem} once we establish the following properties:\\

(1) $\lambda_i> C \implies  1/l_i > \eta_1(C)$.\\
(2) For sufficiently small $c$, $\lambda_i < c \implies 1/l_i < \eta_2(c)$.\\

for some increasing functions $\eta_1, \eta_2:\mathbb{R}_{\geq 0} \to \mathbb{R}_{\geq 0}$.\\

\textit{Property (1)}: By (\ref{eq:mod}) we have:
\begin{equation*}
\lambda_i\geq C \implies mod(\mathcal{A}) \leq 1/C
\end{equation*}
where $\mathcal{A}$ is any embedded annulus in $\Sigma_d$ with core curve $\gamma_i$. This implies that $l_i \leq H/C$ since otherwise one can embed a flat cylinder of modulus greater than $1/C$ with core curve $\gamma_i$. Hence $\eta_1(x) = x/H$ works.\\

\textit{Property (2)}: We shall prove the converse statement: If $l_i<B$, then $\lambda_i>C>0$ for some $C$ that depends on $B$. For this, we shall construct a curve $\beta_i$ that intersects $\gamma_i$ at most twice, such that the extremal length
 \begin{equation}\label{eq:bi}
 Ext(\beta_i) < D
 \end{equation}
 for some $D$ (depending on $B$). The lower bound $C$ for $\lambda_i =Ext(\gamma_i)$ now follows from the well-known inequality (see \cite{Min1}):
\begin{equation*}
Ext(\gamma_i)Ext(\beta_i)\geq i(\gamma_i, \beta_i)^2 
\end{equation*}

To show the bound (\ref{eq:bi}) we shall use the geometric definition of extremal length (\ref{eq:mod}): namely, we shall construct an annulus of definite modulus with core curve $\beta_i$.\\

The construction of $\beta_i$ is geometric, and falls in two cases. Consider the cylinder  $C_i$ corresponding to $\gamma_i$ on the quadrupled surface $\Sigma_d$, of circumference $H$, and length $l_i<B$ (by our current assumption).  Recall $C_i$ has a bilateral symmetry that comes from the doubling. Each of its two boundary components is adjacent to the other cylinders $C_2,\ldots C_n$, and at least one of them, say $C_k$, shares a boundary segment with $C_i$ of definite length, that is: 
\begin{equation*}
l(\partial C_k \cap \partial C_i)> \frac{H}{n}
\end{equation*}
Let the length of the cylinder $C_k$ be $l_k$. The two cases are:\\

(I) $l_k \leq B$:  In this case we construct the curve $\beta_i$ intersecting $\gamma_i$ once, as shown in Figure 16.\\

  \begin{figure}[h]
  \centering
  \includegraphics[scale=0.65]{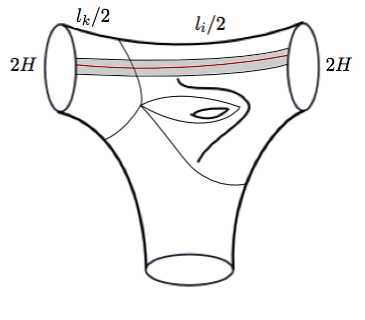}\\
  \caption{In Case I $\beta_i$ consists of an arc across $C_i$ and an arc across $C_k$. This figure shows half of the surface. By the two-fold symmetry from the ``doubling", these arcs join up to form a closed curve.  }
  \end{figure}

(II) $l_k>B$: In this case we construct $\beta_i$ intersecting $\gamma_i$ twice, as shown in Figure 17.\\

  \begin{figure}[h]
  \centering
  \includegraphics[scale=0.65]{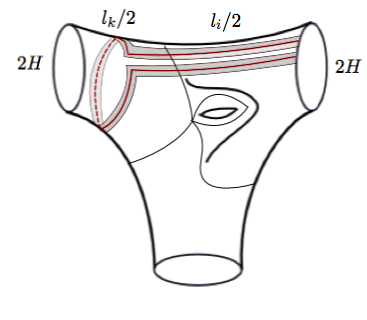}\\
  \caption{In Case II $\beta_i$ comprises two arcs across $C_i$, separated by a definite distance, that extend a bit into $C_k$,  together with a loops around $C_k$ connecting the two pairs of endpoints.}
  \end{figure}

In both cases, the curves are of bounded length and admit an embedded ``collar" neighborhood of definite width (these dimensions depend only on $B$ and $H$) and hence satisfy (\ref{eq:bi}) for some $D$, by the geometric definition of extremal length. The function $\eta_2$ is implicit from the construction. \end{proof}

\textit{Remark.} The above argument also works if the cylinder circumferences are fixed $n$-tuple $(c_1,c_2,\ldots c_n)$, by replacing $H$ by the maximum or minimum value of the $c_i$-s, in the proof, as appropriate.

\subsection*{Half-plane surfaces $\Sigma_i^\prime$}

By Lemmas \ref{lem:cyllen} and \ref{lem:arcs} one can now choose the $a$- and $b$-arcs on $\partial \Sigma_i$ such that the cylinder lengths of the Jenkins-Strebel differential on the quadrupled surface $\widehat{\Sigma_i}$ are all $2H_i$. (Recall $H_i = \left(H_0\cdot 2^i\right)^{n/2}$ as in (\ref{eq:hi}). See also the remark following Lemma \ref{lem:cyllen}.)\\

Hence on each singular flat surface $(\Sigma_i,q_i)$, one now has $n$ euclidean rectangles $R_1,\ldots R_n$ glued along the metric spine $\mathcal{G}_i$ in that cyclic order, such that the resulting polygonal boundary has all the side-lengths $H_i$, except one horizontal side of length $H_i+a$ (see Figure 18 - here $a$ is the desired residue at the pole).\\
  
We construct a half-plane surface $\Sigma_i^\prime$ by gluing in a planar end $\mathcal{P}_{H_i}$ (see also Definition \ref{defn:trunc}) which has a polygonal boundary isometric to $\partial \Sigma_i$. From our choice of lengths of the polygonal boundary, the metric residue of $\Sigma_i^\prime$ is equal to $a$. For each $i$ these planar ends are truncations at height $H_i\to \infty$ of a fixed planar end $\mathcal{P}$ of residue $a$.

\subsection*{Choice of $H_i$} 

Recall from \S3.2 that there is a conformal map from $\mathcal{P}_{H_i} \subset \Sigma^\prime_i$ to a neighborhood $U_{H_i}$ of $0\in\mathbb{C}$ that is an isometry in the $\phi$-metric as in (\ref{eq:stan}).\\

We now observe that by Lemma \ref{lem:uh} the choice of $H_i$ in (\ref{eq:hi}) yields the following:

\begin{lem}\label{lem:uhi}
$D_1^\prime2^{-i}\leq dist(0,\partial U_{H_i})  \leq D_2^\prime2^{-i}$, where $D_1^\prime, D_2^\prime>0$ are constants independent of $i$ (they depend only on the choice of $H_0$).
\end{lem}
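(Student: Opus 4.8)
The plan is to substitute the explicit formula $H_i = (H_0\cdot 2^i)^{n/2}$ from (\ref{eq:hi}) directly into the two-sided bound of Lemma \ref{lem:uh}, and observe that the exponent $n/2$ has been chosen precisely so that the $H_i^{2/n}$ appearing there collapses to a clean power of $2^i$. Concretely, $H_i^{2/n} = \left((H_0\cdot 2^i)^{n/2}\right)^{2/n} = H_0 \cdot 2^i$, so Lemma \ref{lem:uh} gives, for all sufficiently large $i$ (equivalently, all sufficiently large $H_i$), the chain of inequalities
\begin{equation*}
\frac{D_1}{H_0\cdot 2^i} \leq dist(0,\partial U_{H_i}) \leq \frac{D_2}{H_0\cdot 2^i}.
\end{equation*}
Setting $D_1' = D_1/H_0$ and $D_2' = D_2/H_0$ then yields exactly the asserted bound $D_1'\,2^{-i}\leq dist(0,\partial U_{H_i}) \leq D_2'\,2^{-i}$, with $D_1', D_2'$ depending only on the universal constants $D_1, D_2$ of Lemma \ref{lem:uh} and on the chosen $H_0$, hence independent of $i$.

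The only subtlety is the qualifier ``for all sufficiently large $H$'' in Lemma \ref{lem:uh}: a priori the displayed inequality is only claimed once $H_i$ exceeds some threshold $H_\ast$. Since $H_i = (H_0\cdot 2^i)^{n/2}\to\infty$ as $i\to\infty$, there is some $i_0$ (depending on $H_0$ and $n$) beyond which $H_i > H_\ast$, so the estimate holds for all $i\geq i_0$. For the finitely many remaining indices $i < i_0$, the quantity $dist(0,\partial U_{H_i})$ is a fixed positive real, so one can always enlarge $D_2'$ and shrink $D_1'$ by a bounded factor to absorb them; alternatively one simply reads the lemma as an asymptotic statement, which is all that is needed for its use in \emph{Step 3}. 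Either way the constants remain independent of $i$.

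I do not anticipate a genuine obstacle here — this is a bookkeeping step whose entire content is that the defining exponent $n/2$ in (\ref{eq:hi}) was reverse-engineered from the exponent $2/n$ in Lemma \ref{lem:uh}. The one thing worth stating carefully in the writeup is the dependence of the constants: $D_1', D_2'$ depend on $H_0$ but not on $i$, which is exactly the form in which the lemma will be invoked (together with Corollary \ref{cor:uhmap}, which translates it into a comparable two-sided control on $|\phi_{H_i}'(0)|$ of order $2^i$) when building the quasiconformal maps realizing the conformal limit in \emph{Step 3}.
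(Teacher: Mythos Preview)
Your proposal is correct and is exactly the argument the paper intends: the paper does not even write out a proof, but simply states that ``by Lemma \ref{lem:uh} the choice of $H_i$ in (\ref{eq:hi}) yields'' the result, which is precisely your substitution $H_i^{2/n} = H_0\cdot 2^i$ followed by setting $D_j' = D_j/H_0$. Your additional remarks about the ``sufficiently large $H$'' threshold and the downstream use of the lemma are accurate and harmless elaborations.
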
 

\textit{Remark.} This choice implies the modulus of the annulus $U\setminus U_i$ in the compact exhaustion is comparable (upto a bounded multiplicative factor) to that of the annulus $\mathcal{P}_{H_0}\setminus \mathcal{P}_{H_i}$ on the half-plane surface $\Sigma_i^\prime$. This is the geometric control crucial for extracting a convergent subsequence in \textit{Step 4}.

\begin{figure}[h]
  \centering
  \includegraphics[scale=0.7]{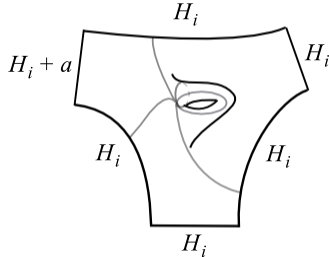}\\
  \caption{\textit{Step 2} ensures that one gets a ``rectangular" surface $\Sigma_i$ with horizontal and vertical edges as shown, that can be extended to a half-plane surface $\Sigma_i^\prime$ by gluing in a planar end. }
  \end{figure}

\section{Step 3: A conformal limit}

Here we show that the sequence $\{\Sigma_i^\prime\}_{i\geq 1}$ of half-plane surfaces constructed in the previous section has $\Sigma\setminus p$ as a conformal limit:

\begin{lem}[Conformal limit]\label{lem:approx1}
For all sufficiently large $i$ there exist $(1+\epsilon_i)$-quasiconformal homeomorphisms
\begin{equation}\label{eq:fi}
f_i:\Sigma_i^\prime \to \Sigma\setminus p
\end{equation}
where $\epsilon_i\to 0$ as $i\to \infty$.
\end{lem}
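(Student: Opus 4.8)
The plan is to build the quasiconformal maps $f_i$ by interpolating between two simple pieces: on the ``thick part'' of $\Sigma_i'$ (away from the planar end $\mathcal{P}_{H_i}$) the surfaces $\Sigma_i'$ and $\Sigma \setminus p$ already agree, so the map is the identity there; the work is concentrated in a fixed annular collar near the puncture, where one must glue the truncated planar end $\mathcal{P}_{H_i}$ to the disk neighborhood $U$ of $p$ in $\Sigma$. Concretely, fix a base height $H_0$ (the one appearing in \eqref{eq:hi}) and identify $\Sigma_i'$ with $\Sigma_i = \Sigma \setminus U_i$ glued to $\mathcal{P}_{H_i}$ along their common polygonal boundary. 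Outside the annulus $\mathcal{P}_{H_0} \setminus \mathcal{P}_{H_i}$, i.e.\ on the part of $\Sigma_i'$ that is literally a subsurface of $\Sigma \setminus p$, take $f_i$ to be the inclusion. It remains to define $f_i$ on the annular region $\mathcal{P}_{H_0}\setminus \mathcal{P}_{H_i}\subset \Sigma_i'$ so that it maps conformally-almost to the corresponding annulus $U_{\text{outer}}\setminus U_i \subset \Sigma \setminus p$, matching the already-defined map on the $\mathcal{P}_{H_0}$-boundary and sending the $\mathcal{P}_{H_i}$-boundary to $C_i=\partial U_i$.

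The key step is to realize this annular interpolation as a map of small dilatation. Using the conformal identification of $\mathcal{P}_{H_i}\cup\infty$ with the neighborhood $U_{H_i}$ of $0\in\mathbb{C}$ (from \S3.2), the annulus $\mathcal{P}_{H_0}\setminus\mathcal{P}_{H_i}$ is conformally $U_{H_0}\setminus U_{H_i}$, a round-ish annulus in $\mathbb{C}$; on the $\Sigma$-side, $U\setminus U_i$ is conformally $\mathbb{D}\setminus B(2^{-i})$ via $\phi$. By Lemma \ref{lem:uhi}, $\operatorname{dist}(0,\partial U_{H_i})$ is comparable to $2^{-i}$, which is exactly the radius of $B(2^{-i})$; hence both annuli have moduli that agree up to a \emph{bounded additive constant} (the outer boundaries contribute an $O(1)$ modulus, and the inner radii match up to a bounded multiplicative factor, giving an $O(1)$ additive discrepancy in modulus). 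The crucial point is that as $i\to\infty$ the modulus of each annulus tends to $\infty$, so the ratio of the two moduli tends to $1$. Standard facts about quasiconformal maps between annuli (e.g.\ that a round annulus of modulus $m$ and another of modulus $m'$ admit a $K$-qc map with $K = \max(m/m', m'/m)$, and more precisely that one can choose the map to be the identity on one boundary circle and a prescribed rotation-free radial-type map on the other) then produce a $(1+\epsilon_i)$-qc map of the desired annulus that has the correct boundary behavior on both components, with $\epsilon_i = O(1/m_i) \to 0$. One technical nuisance is that the inner boundary $C_i$ is a circle in the $\phi$-chart but the image of $\partial\mathcal{P}_{H_i}$ is the (non-circular) polygonal boundary; this is absorbed by composing with a uniformly-qc map straightening the polygon to a circle — uniformly because, after rescaling by $H_i$, the polygons $\partial\mathcal{P}_{H_i}$ converge to a fixed limiting polygon (the effect of the fixed residue $a$ becoming negligible, as in Lemma \ref{lem:uh}).

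Assembling: on the region $\Sigma_i' \setminus (\mathcal{P}_{H_0}\setminus\mathcal{P}_{H_i})$ use the identity/inclusion (dilatation $1$), on $\mathcal{P}_{H_0}\setminus\mathcal{P}_{H_i}$ use the annular map just constructed (dilatation $1+\epsilon_i$), and check the two definitions agree on the common boundary curve $\partial\mathcal{P}_{H_0}$ — which holds by construction since there the annular map is taken to be the identity. The glued map $f_i:\Sigma_i'\to\Sigma\setminus p$ is then a homeomorphism, conformal except on the collar, and $(1+\epsilon_i)$-quasiconformal with $\epsilon_i\to 0$. (That $f_i$ is homotopic to the identity, which is what is ultimately needed for Theorem \ref{thm:main}, follows because the collar deformation is supported in a disk.)

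I expect the main obstacle to be the matching-of-moduli estimate and getting the boundary correspondence right simultaneously: one needs not just that the two annuli are qc-equivalent with constant $\to 1$, but that there is such a map realizing the \emph{prescribed} identifications on \emph{both} boundary components (the inclusion on the outer boundary and the conformal-chart identification $\partial\mathcal{P}_{H_i}\leftrightarrow C_i$ on the inner one). This is where Lemma \ref{lem:uhi} — equivalently the careful choice of $H_i = (H_0 2^i)^{n/2}$ in \eqref{eq:hi} — does the essential work, forcing the inner radii to match up to bounded distortion so that no unbounded qc cost is incurred near the puncture. The rest (straightening polygons, interpolating on a fixed annulus, verifying homeomorphism and homotopy class) is routine.
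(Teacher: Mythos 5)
Your decomposition does not quite make sense, and the place where you acknowledge the main difficulty is exactly where the argument has a gap.

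First, the annulus $\mathcal{P}_{H_0}\setminus\mathcal{P}_{H_i}$ is not a subset of $\Sigma_i'$. Only the truncation $\mathcal{P}_{H_i}$ (the deepest, smallest part, since the truncations are nested with $\mathcal{P}_{H_i}\subset\mathcal{P}_{H_0}$ for $H_i>H_0$) is glued into $\Sigma_i'$; the region that $\mathcal{P}_{H_0}\setminus\mathcal{P}_{H_i}$ would occupy is instead carrying the rectangular $q_i$-metric coming from Jenkins--Strebel, not the fixed planar-end metric. So ``take $f_i$ to be the inclusion outside $\mathcal{P}_{H_0}\setminus\mathcal{P}_{H_i}$'' includes the instruction to use the identity on $\mathcal{P}_{H_i}\subset\Sigma_i'$, which is impossible since $\mathcal{P}_{H_i}$ is not a subsurface of $\Sigma\setminus p$. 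The actual split is simply $\Sigma_i'=\Sigma_i\cup_\partial\mathcal{P}_{H_i}$ and $\Sigma\setminus p=\Sigma_i\cup_{C_i}(U_i\setminus p)$.

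Second, and more seriously, the step where you want a $(1+\epsilon_i)$-quasiconformal map of the collar annulus realizing \emph{prescribed} boundary identifications on \emph{both} boundary circles is not justified, and the ``radial-stretch between annuli of comparable modulus'' fact you invoke does not give it. That fact yields small dilatation only when the boundary gluings are rotations in the angular coordinate; here the inner boundary gluing is the conformal-chart identification of a polygon $\partial\mathcal{P}_{H_i}$ with a round circle $C_i$, and you yourself label the straightening ``uniformly-qc,'' meaning it has quasisymmetric constant $K>1$ bounded away from $1$. Composing a $(1+\epsilon_i)$-qc radial stretch with a map of fixed quasisymmetric cost $K$ on one boundary circle does not give a $(1+\epsilon_i)$-qc map; it gives a map whose dilatation is bounded below by a constant $>1$ near the inner boundary. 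Large modulus lets you dilute \emph{additive} modulus discrepancies, but it does not let you absorb a boundary map whose quasisymmetry constant is bounded away from $1$.

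The paper's proof avoids this issue entirely. It does not try to prescribe the map on the inner boundary at all: one takes the conformal embedding $g_i:\Sigma_i\to\Sigma\setminus p$, notes that its restriction to the annulus $\Sigma_i\setminus\Sigma_1$ has modulus $\to\infty$, and then applies Corollary~\ref{cor:corqclem} to produce a $(1+\epsilon_i)$-qc extension of $g_i\vert_{\partial\Sigma_1}$ across the whole punctured disk $\Sigma_i'\setminus\Sigma_1\to U\setminus p$. The mechanism is Lemma~\ref{lem:qclemnew}: a conformal embedding of a thick round annulus automatically extends to a $(1+C\epsilon)$-\emph{quasisymmetric} boundary map (with $\epsilon$ controlled by the modulus, not by the shape of the inner region), and then the Ahlfors--Beurling extension gives the small-dilatation map on the disk. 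Boundary values are imposed only on the \emph{outer} circle $\partial\Sigma_1$, where $g_i$ is conformal; the extension is free to do whatever it needs on the inner region, so the polygon shape never generates any quasiconformal cost. Your approach is not a variant of this --- it is a genuinely different construction, and the polygon-straightening obstruction is precisely the obstruction that Corollary~\ref{cor:corqclem} is designed to route around.
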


The intuition behind the proof is that since $\Sigma_i^\prime$ is obtained by excising-and-regluing disks from $\Sigma$ that get smaller as $i\to\infty$, for large enough $i$ the conformal structure is not too different, and one can construct an almost-conformal map as above.

\subsection*{Quasiconformal extensions}
The following lemma is a slight strengthening of the quasiconformal extension lemma proved in \cite{Gup1}.\\

Throughout, $\mathbb{D}$ shall denote a unit disk of radius $1$ and $B(r)$ shall denote an open ball of radius $r$, centered at $0\in \mathbb{C}$.

\begin{lem}\label{lem:qclemnew}
For any $\epsilon>0$ sufficiently small, and $0\leq r\leq \epsilon$, a map
\begin{center}
$f:\mathbb{D}\setminus B(r)\to \mathbb{D}$
\end{center}
that\\
(1) preserves the boundary and is a homeomorphism onto its image,\\
(2) is $(1+\epsilon)$-quasiconformal on $\mathbb{D}\setminus B(r)$ \\
extends to a $(1 + C\epsilon)$-quasisymmetric map on the boundary, where $C>0$ is a universal constant.
\end{lem}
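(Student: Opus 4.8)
The plan is to produce a $(1+C\epsilon)$-quasiconformal self-homeomorphism $\widetilde f$ of $\mathbb{D}$ that agrees with $f$ on $\partial\mathbb{D}$; the assertion then follows at once. Such a $\widetilde f$ cannot be $f$ itself, since $f$ is undefined on the hole $B(r)$, and the whole point is to fill that hole without spoiling the dilatation — this is where the hypothesis $r\le\epsilon$ is used. Let $\mu$ be the complex dilatation of $f$ on $\mathbb{D}\setminus\overline{B(r)}$, so $\|\mu\|_\infty\le\epsilon$, and extend it by $0$ over $B(r)$. Reflecting $\mu$ across $\partial\mathbb{D}$ and solving the Beltrami equation on $\widehat{\mathbb{C}}$, we obtain a $(1+\epsilon)$-quasiconformal self-homeomorphism $G$ of $\mathbb{D}$ with dilatation $\mu$, which is conformal on $B(r)$, carries $\partial\mathbb{D}$ to itself, and — after normalizing $G$ to fix three points of $\partial\mathbb{D}$ — is $O(\epsilon)$-close to the identity on $\overline{\mathbb{D}}$. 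Since $f$ and $G$ have the same complex dilatation on $\mathbb{D}\setminus\overline{B(r)}$, the composition $h:=f\circ G^{-1}$ is \emph{conformal} on the ring domain $\Omega:=G(\mathbb{D}\setminus\overline{B(r)})=\mathbb{D}\setminus K'$, where $K':=G(\overline{B(r)})$; it maps $\Omega$ onto $\mathbb{D}\setminus K$ with $K:=\mathbb{D}\setminus f(\mathbb{D}\setminus\overline{B(r)})$, and it fixes $\partial\mathbb{D}$ setwise.

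\textbf{Steps 2 and 3 (the conformal error is almost M\"obius).} Because $G$ is $O(\epsilon)$-close to the identity and $\overline{B(r)}\subset\overline{B(\epsilon)}$, the hole $K'=G(\overline{B(r)})$ lies in $B(c\epsilon)$ for a universal $c$; in particular $\partial K'$ is a Jordan curve and $K'$ is a spherically small neighbourhood of $0$, bounded away from $\partial\mathbb{D}$. Since $h$ is conformal on $\mathbb{D}\setminus K'$ and maps $\partial\mathbb{D}$ onto $\partial\mathbb{D}$, Schwarz reflection across $\partial\mathbb{D}$ extends $h$ to a univalent map $H$ on $\widehat{\mathbb{C}}$ with the sole exception of $K'$ and its reflection $\widehat{K'}$; as reflection in $\partial\mathbb{D}$ is an isometry of the spherical metric, $\widehat{K'}$ is likewise spherically small (a neighbourhood of $\infty$), so $H$ is univalent off two sets of spherical diameter $O(\epsilon)$ lying at spherical distance $\gtrsim 1$ from $\partial\mathbb{D}$. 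By the Koebe distortion theorem (equivalently, a normal-families argument), such an $H$ is, uniformly on $\partial\mathbb{D}$, within $O(\epsilon)$ in the spherical metric of a M\"obius transformation; adjusting the latter by an $O(\epsilon)$-correction we may take it in $\mathrm{Aut}(\mathbb{D})$, say $M$. Then $M^{-1}\circ h|_{\partial\mathbb{D}}$ is an $O(\epsilon)$-perturbation of the identity on $\partial\mathbb{D}$, so by the quantitative Beurling--Ahlfors extension it extends to a $(1+C\epsilon)$-quasiconformal self-map of $\mathbb{D}$; precomposing with $M$ gives a $(1+C\epsilon)$-quasiconformal self-map $\widetilde h$ of $\mathbb{D}$ with $\widetilde h|_{\partial\mathbb{D}}=h|_{\partial\mathbb{D}}$.

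\textbf{Step 4 and where the work lies.} Put $\widetilde f:=\widetilde h\circ G$: it is a composition of a $(1+C\epsilon)$- and a $(1+\epsilon)$-quasiconformal self-map of $\mathbb{D}$, hence $(1+C'\epsilon)$-quasiconformal. On $\partial\mathbb{D}$ we have $G(\partial\mathbb{D})=\partial\mathbb{D}$ and $\widetilde h|_{\partial\mathbb{D}}=h|_{\partial\mathbb{D}}$, so $\widetilde f|_{\partial\mathbb{D}}=(f\circ G^{-1})\circ G|_{\partial\mathbb{D}}=f|_{\partial\mathbb{D}}$; thus $f|_{\partial\mathbb{D}}$ extends to the $(1+C'\epsilon)$-quasiconformal self-homeomorphism $\widetilde f$ of $\mathbb{D}$, i.e.\ is $(1+C'\epsilon)$-quasisymmetric. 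The only genuinely quantitative inputs are: that the straightened map $G$ (hence $K'$, hence $\widehat{K'}$) is $O(\epsilon)$-small — which is exactly where $r\le\epsilon$ enters — together with the two standard estimates that a univalent map omitting sets of spherical diameter $O(\epsilon)$ is $O(\epsilon)$-close to a M\"obius map, and that an $O(\epsilon)$-perturbation of the identity on $\partial\mathbb{D}$ admits a $(1+O(\epsilon))$-quasiconformal extension (quantitative Beurling--Ahlfors). Keeping every constant linear in $\epsilon$, rather than merely $o(1)$, is precisely the strengthening over \cite{Gup1}; everything else is formal. The main obstacle is, accordingly, the bookkeeping in Steps 2--3: verifying that the normalization of $G$ really forces $K'$ to stay $O(\epsilon)$-deep inside $\mathbb{D}$ (so that $\partial\mathbb{D}$ is uniformly separated from both $K'$ and $\widehat{K'}$, without which the Koebe estimate near $\partial\mathbb{D}$ would fail).
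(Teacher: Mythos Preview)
Your approach is genuinely different from the paper's, and while the structural outline (straighten the Beltrami coefficient, reduce to a conformal map with a hole, reflect) is natural, there is a real gap at Step~3 that is not mere ``bookkeeping.''

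The paper's route is more direct. First, a modulus computation shows that the complementary hole in the \emph{image}, $\mathbb{D}\setminus f(A)$, has diameter at most $C_1\epsilon$. Then, for any pair of arcs on $\partial\mathbb{D}$, one compares the extremal length of the curve family joining them in $\mathbb{D}$ versus in $f(A)$; a length--area inequality gives a multiplicative error at most $1+C_2\epsilon$. Since these extremal lengths encode cross-ratios of boundary quadruples, this yields $(1+C\epsilon)$-quasisymmetry immediately via the Ahlfors--Beurling criterion. No straightening, reflection, or univalent-function machinery is used.

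In your argument, the assertion that the reflected map $H$ (univalent off $K'\cup\widehat{K'}$ with $K'\subset B(c\epsilon)$) is ``within $O(\epsilon)$ of a M\"obius transformation on $\partial\mathbb{D}$'' is precisely the quantitative content of the lemma, and neither Koebe distortion nor a normal-families argument delivers it. Normal families gives only $o(1)$ as $\epsilon\to 0$; Koebe distortion controls $|H'(z)/H'(z_0)|$ on a fixed simply connected domain, not global proximity to a M\"obius map on the core circle of a growing annulus. Concretely, you never control the \emph{image} hole $K$: a priori $K$ can sit at distance $\sim\epsilon$ from $\partial\mathbb{D}$ (e.g.\ $K\approx B(\epsilon^2,\,1-\epsilon)$, for which the moduli match), and then the approximating M\"obius map is a highly eccentric element of $\mathrm{Aut}(\mathbb{D})$; showing the error is still $O(\epsilon)$ in that regime requires an argument you have not supplied.

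There is a second gap. Even granting that $M^{-1}\circ h|_{\partial\mathbb{D}}$ is an $O(\epsilon)$-perturbation of the identity in the $C^0$ sense, this does \emph{not} imply $(1+C\epsilon)$-quasisymmetry: consider $\theta\mapsto\theta+\epsilon\sin(N\theta)$ for large $N$. The Beurling--Ahlfors extension presupposes a quasisymmetry bound; it does not manufacture one from $C^0$ closeness. You would need $C^1$ or cross-ratio control on $\partial\mathbb{D}$, which again is exactly the estimate being proved.
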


\begin{proof}[Sketch of the proof]
In \cite{Gup1} (see Appendix A of that paper) we proved this when the map $f$ was a quasiconformal homeomorphism of the \textit{entire} disk, though it had the control on distortion only on the annulus $A = \mathbb{D}\setminus B(r)$ as in $(2)$ above. However, all that was required was the following estimate on the image of the ball $B(r)$:
\begin{equation*}
diam(f(B(r))) < C_1\epsilon
\end{equation*}
for some universal constant $C_1>0$.\\
Here, this can be replaced by the following fact:
\begin{equation}\label{eq:pf1}
d = diam(\mathbb{D}\setminus f(A)) < C_1\epsilon
\end{equation}
which follows from the modulus-estimates
\begin{equation}\label{eq:pf2}
\frac{1}{1+\epsilon} \leq \frac{mod(f(A))}{mod(A)}
\end{equation}
\begin{equation}\label{eq:pf3}
mod(A) = \frac{1}{2\pi}\ln{\frac{1}{r}} \leq \frac{1}{2\pi}\ln{\frac{1}{\epsilon}}
\end{equation}
\begin{equation}\label{eq:pf4}
mod(f(A)) < \frac{1}{2\pi} \ln{\frac{16}{d}}
\end{equation}
where (\ref{eq:pf2}) follows from the hypothesis $(2)$ above, (\ref{eq:pf3}) follows from the fact that $A$ is a circular annulus and $r\leq \epsilon$,  and (\ref{eq:pf4}) is well-known (see III.A of \cite{Ahl}).\\

\begin{figure}
  \centering
  \includegraphics[scale=0.55]{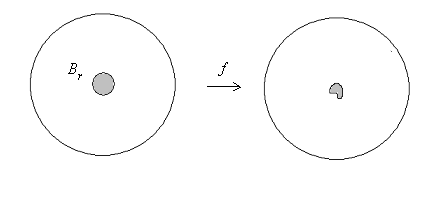}\\
  \caption{The map $f$ in Lemma \ref{lem:qclemnew} is almost-conformal off a small sub-disk.}
  \end{figure}
The rest of the proof is exactly the same as in \cite{Gup1}:\\

Let $\Gamma$ be family of curves between two arcs on the boundary of $\mathbb{D}$, that avoids the set $\mathbb{D}\setminus f(A)$ which by (\ref{eq:pf1}) is of diameter $O(\epsilon)$. Then by a length-area inequality, we have the following estimate on the extremal lengths:
\begin{equation}
1\leq \frac{\lambda_{f(A)}(\Gamma)}{\lambda_\mathbb{D}(\Gamma)} \leq 1 +C_2\epsilon
\end{equation}
where $C_2>0$ depends only on $C_1$.\\
Since this holds for any pair of arcs on the boundary $\partial \mathbb{D}$, it translates to a condition on the cross-ratios of four boundary points, and is enough to prove the extension of $f$ to the boundary is $(1 + C\epsilon)$-quasisymmetric, as claimed (see \cite{AB}, and \cite{Gup1} for details).
 \end{proof}

 \begin{cor}\label{cor:cor1qclem}
 Let $r>0$ be sufficiently small. Suppose $g:\mathbb{D}\setminus B(r) \to \mathbb{D}$ is a conformal embedding that extends to a homeomorphism of $\partial \mathbb{D}$ to $\partial \mathbb{D}$. Then there exists a  $(1+\epsilon)$-quasiconformal map $f:\mathbb{D} \to \mathbb{D}$ such that the extension of $f$ to $\partial \mathbb{D}$ agrees with that of $g$, and 
 \begin{equation}\label{eq:epr}
 \epsilon < 2C^\prime r
 \end{equation}
 for some universal constant $C^\prime>0$.
 \end{cor}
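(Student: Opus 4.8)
The plan is to derive Corollary \ref{cor:cor1qclem} from Lemma \ref{lem:qclemnew} together with the standard Beurling--Ahlfors quasisymmetric extension. First I would observe that the hypotheses of Lemma \ref{lem:qclemnew} are satisfied by $g$: indeed $g$ is a conformal embedding of $\mathbb{D}\setminus B(r)$ into $\mathbb{D}$ preserving the boundary (hence a homeomorphism onto its image) and it is $1$-quasiconformal, so it is certainly $(1+\epsilon_0)$-quasiconformal for any $\epsilon_0>0$; taking $\epsilon_0 = r$ (which is admissible once $r$ is small) the conclusion of Lemma \ref{lem:qclemnew} gives that the boundary extension $g|_{\partial\mathbb{D}}$ is a $(1+Cr)$-quasisymmetric homeomorphism of $S^1$.

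Next I would invoke the Beurling--Ahlfors extension theorem (see \cite{AB}): a $(1+\delta)$-quasisymmetric self-homeomorphism of the circle extends to a $(1+C''\delta)$-quasiconformal self-homeomorphism of $\mathbb{D}$, where $C''$ is universal. Applying this to $g|_{\partial\mathbb{D}}$ with $\delta = Cr$ produces a quasiconformal map $f:\mathbb{D}\to\mathbb{D}$ whose restriction to $\partial\mathbb{D}$ agrees with that of $g$, and whose maximal dilatation is at most $1+C''Cr$. Setting $C' = C''C$ (and noting $1+C''Cr \le 1 + 2C'r$ trivially, with room to spare so that the strict inequality (\ref{eq:epr}) holds for sufficiently small $r$) gives a $(1+\epsilon)$-quasiconformal map with $\epsilon < 2C'r$, as claimed. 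One should also record that $f$ is a homeomorphism $\mathbb{D}\to\mathbb{D}$ since the Beurling--Ahlfors extension is; no gluing or surgery is needed here.

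The only genuinely delicate point is the interplay of constants and the meaning of ``sufficiently small $r$'': Lemma \ref{lem:qclemnew} requires $\epsilon$ small and $0\le r\le\epsilon$, so one must first fix $\epsilon_0$ small enough for that lemma, then shrink $r$ below $\epsilon_0$; afterwards the Beurling--Ahlfors constant is universal and imposes no further smallness, but one wants $r$ small enough that $1+C''Cr < 1+2C'r$ is the comfortable bound, which it is with $C'=C''C$. I expect the bookkeeping of these constants to be the main (minor) obstacle; the rest is a direct concatenation of the two cited black boxes, and the proof can be stated in a few lines.

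\begin{proof}
Since $g$ is a conformal embedding it is in particular a $1$-quasiconformal homeomorphism onto its image preserving $\partial\mathbb{D}$, so for any sufficiently small $\epsilon_0>0$ and any $r\le\epsilon_0$ it satisfies the hypotheses of Lemma \ref{lem:qclemnew} with ``$\epsilon$'' taken to be $r$. Hence the boundary map $g|_{\partial\mathbb{D}}$ is $(1+Cr)$-quasisymmetric, with $C$ the universal constant of that lemma. By the Beurling--Ahlfors extension theorem (\cite{AB}), a $(1+\delta)$-quasisymmetric self-homeomorphism of $\partial\mathbb{D}$ extends to a $(1+C''\delta)$-quasiconformal self-homeomorphism of $\mathbb{D}$ for a universal constant $C''$. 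Applying this to $g|_{\partial\mathbb{D}}$ with $\delta=Cr$ produces a $(1+C''Cr)$-quasiconformal homeomorphism $f:\mathbb{D}\to\mathbb{D}$ agreeing with $g$ on $\partial\mathbb{D}$. Taking $C'=C''C$ and $\epsilon=C''Cr<2C'r$ (which holds once $r$ is small) completes the proof.
\end{proof}
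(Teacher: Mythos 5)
Your proof is correct and follows the paper's own argument essentially verbatim: apply Lemma \ref{lem:qclemnew} with the distortion parameter set equal to $r$ (valid since $g$, being conformal, is $(1+r)$-quasiconformal and $r\le r$), obtain a $(1+Cr)$-quasisymmetric boundary map, and then invoke the Beurling--Ahlfors extension to get the $(1+C'r)$-quasiconformal self-map of $\mathbb{D}$. Your extra bookkeeping about the smallness of $r$ and the trivial inequality $C'r<2C'r$ is harmless and matches the paper's intent.
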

 \begin{proof} Since $g$ is conformal, it is also $(1+ r)$-quasiconformal. By the previous lemma, $g$ extends to a $(1+Cr)$-quasisymmetric map of the boundary, which by the Ahlfors-Beurling extension (see \cite{AB}) extends to an $(1+C^\prime r)$-quasiconformal map of the entire disk, which is our required map $f$. 
 \end{proof}
 
 \begin{cor}\label{cor:corqclem}
Let $\epsilon>0$ be sufficiently small, and $U_0,U$ and $U^\prime$ be conformal disks such that $U_0 \subset U$ and the annulus $A = U\setminus U_0$ has modulus larger than $\frac{1}{2\pi}\ln\frac{1}{\epsilon}$. Then for any conformal embedding $g:A\to U^\prime$ that takes $\partial U$ to $\partial U^\prime$  there is a $(1+C^\prime\epsilon)$-quasiconformal map $f:U\to U^\prime$ such that $f$ and $g$ are identical on $\partial U$. 
\end{cor}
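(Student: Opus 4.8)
Corollary \ref{cor:corqclem} is essentially a coordinate-free restatement of Corollary \ref{cor:cor1qclem}, so the plan is to reduce to that result by uniformizing all three conformal disks to $\mathbb{D}$. First I would fix Riemann maps $\varphi: U \to \mathbb{D}$ and $\varphi': U' \to \mathbb{D}$; these extend to homeomorphisms of the closed disks (all our domains are Jordan, being conformal disks with reasonable boundary), so it makes sense to speak of their boundary values. Set $\varphi(U_0) = \Omega_0 \subset \mathbb{D}$, so that $\varphi(A) = \mathbb{D} \setminus \Omega_0$ is a conformal annulus of the same modulus as $A$, namely at least $\frac{1}{2\pi}\ln\frac{1}{\epsilon}$.

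The one technical point is that $\Omega_0$ need not be a round disk $B(r)$, whereas Corollary \ref{cor:cor1qclem} (via Lemma \ref{lem:qclemnew}) is stated for the standard annulus $\mathbb{D} \setminus B(r)$. I would handle this the way Lemma \ref{lem:qclemnew} already does internally: the only feature of the inner boundary that is used in that proof is the modulus bound \eqref{eq:pf3}, which gives smallness of the removed piece via \eqref{eq:pf1}. So rather than re-uniformizing the annulus, I would observe that the proof of Lemma \ref{lem:qclemnew}, hence of Corollary \ref{cor:cor1qclem}, goes through verbatim with $B(r)$ replaced by any simply connected $\Omega_0$ for which $\mathrm{mod}(\mathbb{D}\setminus\Omega_0) \geq \frac{1}{2\pi}\ln\frac{1}{\epsilon}$: estimate \eqref{eq:pf3} is replaced directly by this hypothesis, and the rest is unchanged. (Alternatively one precomposes with the Riemann map of the annulus $\mathbb{D}\setminus\Omega_0$ onto a round annulus $\{r < |z| < 1\}$, which is conformal hence does not affect the quasiconformality constant, to land exactly in the setting of Corollary \ref{cor:cor1qclem}; either route works.)

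With that in hand, consider the conformal embedding $\varphi' \circ g \circ \varphi^{-1}: \mathbb{D} \setminus \Omega_0 \to \mathbb{D}$. By hypothesis $g$ carries $\partial U$ to $\partial U'$, so the boundary extension of this composition maps $\partial \mathbb{D}$ homeomorphically to $\partial \mathbb{D}$. Apply (the extended form of) Corollary \ref{cor:cor1qclem} with $\epsilon$ in the role of $r$: there is a $(1 + C'\epsilon)$-quasiconformal self-map $\tilde f: \mathbb{D} \to \mathbb{D}$ agreeing on $\partial\mathbb{D}$ with the boundary values of $\varphi' \circ g \circ \varphi^{-1}$. Then set $f = (\varphi')^{-1} \circ \tilde f \circ \varphi: U \to U'$. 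Since $\varphi$ and $\varphi'$ are conformal, $f$ has the same dilatation bound $1 + C'\epsilon$ as $\tilde f$, and on $\partial U$ its boundary values are $(\varphi')^{-1} \circ (\varphi' \circ g \circ \varphi^{-1}) \circ \varphi = g$, as required.

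The main obstacle is entirely in the previous paragraph's bookkeeping: making sure the modulus hypothesis on $A$ transfers cleanly to the uniformized picture and that it is exactly the quantity entering \eqref{eq:pf3}, and checking that the non-round inner boundary causes no trouble in the length-area argument of Lemma \ref{lem:qclemnew}. Once one grants that $\mathrm{mod}(A) \geq \frac{1}{2\pi}\ln\frac{1}{\epsilon}$ is precisely what drives the diameter estimate \eqref{eq:pf1}, the corollary is a formal consequence of conjugating by conformal maps, which preserve the quasiconformality constant and are compatible with boundary extensions.
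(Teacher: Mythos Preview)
Your proposal is correct and follows essentially the same route as the paper: uniformize $U$ and $U'$ to $\mathbb{D}$ and reduce to Corollary \ref{cor:cor1qclem}. The paper handles the non-round inner boundary more directly than either of your alternatives, simply noting that the modulus hypothesis forces (after suitable normalization) $U_0 \subset B(r)$ with $r \leq \epsilon$, and then restricting $g$ to the smaller annulus $\mathbb{D}\setminus B(r)$, which lands exactly in the setting of Corollary \ref{cor:cor1qclem} with no need to revisit Lemma \ref{lem:qclemnew} or re-uniformize the annulus.
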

\begin{proof}
By uniformizing, one can assume that $U=U^\prime = \mathbb{D}$ and $U_0 \subset B(r)$ where $r\leq \epsilon$ by the condition on modulus. Hence this reduces to the previous corollary.
\end{proof}

\subsection*{Proof of Lemma \ref{lem:approx1}}
\begin{proof}
Consider the rectangular subsurface $\Sigma_i\subset \Sigma_i^\prime$ and the conformal embedding
\begin{equation*}
g_i:\Sigma_i \to \Sigma\setminus p
\end{equation*}
which exists as the subsurface $\Sigma_i$ is also part of a compact exhaustion of $\Sigma\setminus p$ (see \S6).\\

By construction of $\Sigma_i^\prime$, the complement $\Sigma_i^\prime \setminus \Sigma_i$ is a planar end that is conformally a punctured disk, and by property (3) of the compact exhaustion (see the first section of \S6), so is the complement of $g_i(\Sigma_i)$ in $\Sigma\setminus p$.\\
The conformal embedding  $g_i$ restricts to a conformal map on the  annulus $\Sigma_i \setminus \Sigma_1$. By property (4) of the compact exhaustion (see \S6) this annulus has a modulus $M = A\cdot i$, and hence $M\to \infty$ as $i\to \infty$.\\

By an application of the quasiconformal extension in Corollary \ref{cor:corqclem}, one can get, for sufficiently large $i$, a  $(1 + \epsilon_i)$-quasiconformal map $g_i^\prime$ from the punctured disk $\Sigma_i^\prime \setminus \Sigma_1$ to $U\setminus p$ , that has the same boundary values as $g_i$ on $\partial \Sigma_1$. Here $\epsilon_i\to 0$ as $i\to \infty$. In fact, by (\ref{eq:epr}) and the fact that $M = A\cdot i$, we can derive the better estimate 
\begin{equation}\label{eq:epr2}
\epsilon_i < B 2^{-i}
\end{equation}
where $B>1$ is a universal constant. This will be useful in the next section.\\

Together with the conformal map $g_i$ on $\Sigma_1$, this defines the $(1 + \epsilon_i)$-quasiconformal homeomorphism $f_i:\Sigma_i^\prime \to \Sigma\setminus p $ for all sufficiently large $i$, as required in (\ref{eq:fi}).
\end{proof}

\section{Step 4: A limiting quadratic differential}
In this section and the next we show that after passing to a subsequence the sequence of half-plane surfaces $\Sigma_i^\prime$ converges to a half-plane surface $\Sigma_{n,a}$ -  by \textit{Step 3} this would be conformally equivalent to $\Sigma \setminus p$, as required. In this section we complete a preliminary step, namely we show that after passing to a subsequence the corresponding half-plane differentials converge in  $\widehat{\mathcal{Q}}_m$,  the subset of the bundle of meromorphic quadratic differentials that have a single pole of order exactly $n$ (see Appendix A for a discussion).\\

Recall (from \S6) the half-plane surfaces $\Sigma_i^\prime$ are constructed by excising a disk and gluing in a planar end (a planar domain with a fixed quadratic differential). The idea is that though the glued-in disk gets smaller on the conformal surface, one can still extract some global control on the corresponding half-plane differentials (it is useful to remember that two holomorphic quadratic differentials on a closed surface are identical if they agree on any open set.) In particular, since the planar ends glued in are truncated at heights increasing at a prescribed rate that matches the rate of shrinking of the excised disks  (see the final part of \S7), the restriction of the resulting half-plane differential on a \textit{fixed} conformal disk $U$ converge. We make this precise in the rest of this section.\\

Let $q_i^\prime$ be the half-plane differential corresponding to the half-plane structure on $\Sigma_i^\prime$ and let $U_i = f_i^{-1} (U)$ where $f_i$ is the $(1 + \epsilon_i)$-quasiconformal map in Lemma \ref{lem:approx1}. Let $\phi_i:U_i\to \mathbb{D}$ be the conformal chart mapping $p_i = f_i^{-1}(p)$ to $0$.\\

By the gluing-in construction (see \S6 and the final part of \S7), there is an open set $V_i \subset U_i \subset \Sigma_i^\prime$ containing $p_i$ which, in the metric induced by $q_i^\prime$, is isometric to a  planar end $\mathcal{P}_{H_i}$. Moreover, if $\phi:(U,p)\to (\mathbb{D},0)$ is the conformal coordinate map, then $V_i = f_i^{-1}(V)$ where $V = \phi^{-1}(B(2^{-i}))$. \\

Let us recall that the planar end of $\Sigma_i^\prime$ is isometric (and hence conformally equivalent) to a neighborhood of $0\in \mathbb{C}$ equipped with the following meromorphic quadratic differential (see \S3.3):
\begin{equation}\label{eq:fixqd}
\left(\frac{1}{z^{n+2}} + \frac{ia}{z^{n/2 +2}}\right)dz^2
\end{equation}
Hence the quadratic differential $q_i^\prime$ on $U_i$ (a subset of the planar end) is the pullback by some conformal map, of the above fixed meromorphic quadratic differential on $\mathbb{C}$. The fact that $V_i\subset U_i$ is isometric to the truncation at height $H_i$ of a planar end moreover implies that this conformal map takes $V_i$ to the neighborhood $U_{H_i}$ of $0\in\mathbb{C}$.\\

We shall use the following criterion of convergence of meromorphic quadratic differentials (see the Appendix for a discussion of the proof, and \textit{Criterion $4^\prime$} there):

\begin{lem}\label{lem:convg}
Let $(\Sigma_i, U_i, p_i)$ be a sequence of marked, pointed Riemann surfaces converging to $(\Sigma, U, p)$ in the sense that there exists a $(1+\epsilon_i)$-quasiconformal map $f_i:\Sigma_i\to \Sigma$ that takes $(U_i,p_i)$ to $(U,p)$, such that $\epsilon_i\to 0$ as $i\to \infty$.
Assume that $\Sigma_i$ is equipped with a quadratic differential $q_i$ whose restriction to $U_i$ is the pullback by a univalent conformal map $g_i$ of a fixed meromorphic quadratic differential on $\mathbb{C}$. If $g_i$ form a normal family, then after passing to a subsequence, $q_i\to q\in \widehat{\mathcal{Q}}_m(\Sigma)$.\\
This normality condition is satisfied if for each $i$,  $g_i$ maps the subdomain $V_i\subset U_i$  to $U_{H_i}\subset \mathbb{C}$, where via the conformal identification $\phi_i:(U_i,p_i) \to (\mathbb{D},0)$, we have:
\begin{equation}\label{eq:vib0}
{d}2^{-i}  < dist(0, \partial U_{H_i}) < D2^{-i}
\end{equation}
and
\begin{equation}\label{eq:vib}
{d}2^{-i}  < dist(0, \partial \phi_i(V_i)) < D2^{-i}
\end{equation}
for constants $d,D>0$.
\end{lem}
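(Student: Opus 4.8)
The plan is to establish the two implications in the statement in turn: (i) normality of $\{g_i\}$ yields a subsequential limit $q\in\widehat{\mathcal{Q}}_m(\Sigma)$, and (ii) the distance estimates (\ref{eq:vib0}) and (\ref{eq:vib}) force $\{g_i\}$ to be a normal family. Both rest on Koebe-type distortion estimates for the univalent maps $g_i$, together with the convergence criterion of Appendix A (Criterion $4^\prime$), which is the mechanism that converts convergence of the differentials near the single pole into convergence of the global meromorphic quadratic differentials.

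For (ii), I would pass to the conformal charts $\phi_i:(U_i,p_i)\to(\mathbb{D},0)$ and work with the univalent maps $\tilde g_i:=g_i\circ\phi_i^{-1}:\mathbb{D}\to\mathbb{C}$. Since $p_i$ is the pole of $q_i|_{U_i}=g_i^\ast\psi$ and the fixed differential $\psi$ of (\ref{eq:fixqd}) has its pole at $0$, we have $\tilde g_i(0)=0$, and by hypothesis $\tilde g_i$ maps $W_i:=\phi_i(V_i)$ conformally onto $U_{H_i}$. Applying the Koebe distortion estimate (Corollary 1.4 of \cite{Pom}, exactly as in the proof of Corollary \ref{cor:uhmap}) to the Riemann maps uniformizing $W_i$ and $U_{H_i}$, one obtains a universal constant $K\geq 1$ with $dist(0,\partial U_{H_i})$ trapped between $K^{-1}|\tilde g_i^\prime(0)|\,dist(0,\partial W_i)$ and $K|\tilde g_i^\prime(0)|\,dist(0,\partial W_i)$. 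Feeding in (\ref{eq:vib0}) and (\ref{eq:vib}), which pin both of these distances to within fixed constants of $2^{-i}$, the factors $2^{-i}$ cancel and one gets $|\tilde g_i^\prime(0)|$ bounded above and below by constants independent of $i$; the upper bound, together with the Koebe growth theorem $|\tilde g_i(z)|\leq |\tilde g_i^\prime(0)|\,|z|(1-|z|)^{-2}$, makes $\{\tilde g_i\}$ locally uniformly bounded on $\mathbb{D}$, hence a normal family.

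For (i), assume $\{g_i\}$ is normal, pass to a subsequence with $\tilde g_i\to\tilde g$ locally uniformly on $\mathbb{D}$; by Hurwitz's theorem $\tilde g$ is either univalent or constant, and the constant case is excluded whenever the hypotheses of (ii) hold, since they force $|\tilde g^\prime(0)|>0$, so I take $\tilde g$ univalent with $\tilde g(0)=0$. Then $(\psi\circ\tilde g_i)\,(\tilde g_i^\prime)^2\to(\psi\circ\tilde g)\,(\tilde g^\prime)^2=\tilde g^\ast\psi$ locally uniformly on $\mathbb{D}\setminus\{0\}$, and because $\tilde g^\prime(0)\neq 0$ this limit has a pole at $0$ of the same order as $\psi$ and the same analytic residue (the integral $\int_\gamma\sqrt{\cdot}$ being a conformal invariant). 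Transported back by $\phi_i$, this says the restrictions $q_i|_{U_i}$ converge near $p$ to a meromorphic quadratic differential with a single pole of the prescribed order and residue $a$ at $p$. Combined with the quasiconformal convergence $(\Sigma_i,U_i,p_i)\to(\Sigma,U,p)$ with distortion tending to the identity, Criterion $4^\prime$ of Appendix A then upgrades this local convergence to $q_i\to q\in\widehat{\mathcal{Q}}_m(\Sigma)$, where $q$ extends the local limit.

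I expect the one genuinely substantive step to be the last: invoking Criterion $4^\prime$, where the convergence of the underlying surfaces must be combined with control on the differentials near the single pole to extract a limit inside $\widehat{\mathcal{Q}}_m(\Sigma)$ — the compactness and almost-conformal rigidity inputs behind that statement live in the Appendix. The distortion estimates in (ii) are routine once one notices that hypotheses (\ref{eq:vib0}) and (\ref{eq:vib}) are calibrated precisely so that the scale $2^{-i}$ drops out, leaving $|g_i^\prime(0)|$ pinned between two positive constants; the only other point requiring a little care is the non-degeneracy of the limiting univalent map, which is exactly what the lower bound on $|g_i^\prime(0)|$ obtained in (ii) provides.
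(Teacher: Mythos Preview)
Your approach is the same as the paper's: derive uniform two-sided bounds on $|\tilde g_i^\prime(0)|$ from the distance hypotheses via Koebe-type distortion (this is exactly the argument of Criterion~4 in the Appendix, transplanted to the varying-surface setting), and then use normality/compactness to extract the limiting differential. The Koebe step in your part~(ii) is correct and matches Lemma~\ref{lem:conv4} line for line.

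There is, however, a circularity in your final step. You close part~(i) by invoking Criterion~$4^\prime$, but the paper explicitly states that Criterion~$4^\prime$ \emph{is} Lemma~\ref{lem:convg} (``The above lemma is summarized as Lemma~\ref{lem:convg} in \S9''), so you are citing the statement you are proving. The correct reference at that point is Criterion~$1^\prime$ (Lemma~\ref{lem:conv5}): once you have the derivative bounds from part~(ii), Lemma~\ref{lem:pullb} shows the local expressions $\phi_i(z)=z^k\,q_i(z)$ form a normal family with limit non-vanishing at $0$, and Criterion~$1^\prime$ then handles the passage from local control near the pole to global convergence on the varying surfaces $\Sigma_i\to\Sigma$. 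Your intermediate work (extracting $\tilde g$ univalent, showing $(\psi\circ\tilde g_i)(\tilde g_i^\prime)^2\to\tilde g^\ast\psi$) is more than Criterion~$1^\prime$ requires, so the fix is purely a matter of citing the right lemma.

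One further remark: you correctly flag that ruling out a constant limit $\tilde g$ needs the lower derivative bound, which in turn needs the hypotheses of part~(ii). This is also how the paper proceeds (see the proof of Criterion~2), so the first sentence of the lemma should really be read with that non-degeneracy implicit.
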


Note that (\ref{eq:vib0}) holds by Lemma \ref{lem:uhi}. Hence to show that in our case the maps $f_i:\Sigma^\prime_i \to \Sigma\setminus p$ satisfy above the conditions of the above lemma (in the notation already introduced) we only need to prove the bounds (\ref{eq:vib}). Recall here that $V_i$ is the image of a round disk via a quasiconformal map ($f_i^{-1}\circ \phi^{-1}$) of small dilatation. We start with the following more general analytical lemma:

\begin{lem}\label{lem:jyva} Let $\epsilon>0$ be sufficiently small,  and $r$ satisfy
\begin{equation}\label{eq:rbd}
r\geq \frac{\epsilon}{C}
\end{equation}
for some constant $C>1$. Let $f:\mathbb{D}\to\mathbb{D}$ be a $(1+ \epsilon)$-quasiconformal map such that $f(0)=0$.  Let $V = f(B_r)$ be the image of the subdisk of radius $r$ centered at $0$. Then we have
\begin{equation}\label{eq:contain}
\frac{r}{D} \leq dist(0,\partial V) \leq  32r
\end{equation}
for some universal constant $D>0$.
\end{lem}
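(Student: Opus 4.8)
The statement is a quantitative version of the Koebe distortion principle applied to a quasiconformal---not conformal---self-map of the disk fixing the origin, so the plan is to interpolate between a quasiconformal estimate on the image of the boundary circle $\partial B_r$ and the conformal distortion estimates already used in Corollary~\ref{cor:uhmap}. First I would record the elementary fact that a $(1+\epsilon)$-quasiconformal self-homeomorphism of $\mathbb{D}$ fixing $0$ moves points by a controlled amount: by the modulus inequality applied to the round annulus $\mathbb{D}\setminus B_r$ (exactly as in the proof of Lemma~\ref{lem:qclemnew}, estimates (\ref{eq:pf2})--(\ref{eq:pf4})), the image $f(\partial B_r)$ is contained in an annular region whose inner and outer ``radii'' are comparable to $r$ up to a multiplicative error that tends to $1$ as $\epsilon \to 0$, \emph{provided} $r$ is not much smaller than $\epsilon$ --- which is precisely the hypothesis (\ref{eq:rbd}). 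Concretely, $f(\partial B_r)$ separates $0$ from $\partial \mathbb{D}$, and $\operatorname{mod}(f(\mathbb{D}\setminus B_r))$ is pinched between $\frac{1}{1+\epsilon}\operatorname{mod}(\mathbb{D}\setminus B_r)$ and (by the Teichm\"uller/Gr\"otzsch-type bound) something like $\frac{1}{2\pi}\ln\frac{16}{\operatorname{diam}(\mathbb{D}\setminus V)}$, while on the other side $f(B_r)$ has modulus of complement bounded below, giving a lower bound on $\operatorname{dist}(0,\partial V)$.

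For the upper bound $\operatorname{dist}(0,\partial V)\le 32r$: since $V = f(B_r)$ is contained in the region bounded by the outer boundary of $f(\mathbb{D}\setminus B_r)$, and $\mathbb{D}\setminus B_r$ is a round annulus of modulus $\frac{1}{2\pi}\ln\frac1r$, the image annulus has modulus at least $\frac{1}{1+\epsilon}\cdot\frac{1}{2\pi}\ln\frac1r$; combining this with the Gr\"otzsch inequality $\operatorname{mod}(f(\mathbb{D}\setminus B_r)) \le \frac{1}{2\pi}\ln\frac{16}{\operatorname{dist}(0,\partial V)}$ (which holds since the inner complementary component $V$ contains $0$ and the outer one touches $\partial\mathbb{D}$) gives $\ln\frac{16}{\operatorname{dist}(0,\partial V)} \ge \frac{1}{1+\epsilon}\ln\frac1r$, i.e. $\operatorname{dist}(0,\partial V) \le 16\, r^{1/(1+\epsilon)}$. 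For $\epsilon$ sufficiently small and $r$ bounded by $1$ (indeed $r\le$ some small constant), $r^{1/(1+\epsilon)} \le 2r$, which yields the clean bound $32r$. The lower bound $\operatorname{dist}(0,\partial V) \ge r/D$ is the dual computation: apply the same Gr\"otzsch-type estimate to the annulus $B_r \setminus \{0\}$ mapped into $\mathbb{D}$, or more simply observe that $f^{-1}$ is $(1+\epsilon)$-quasiconformal, so the round disk $B_{\operatorname{dist}(0,\partial V)}\subset V$ maps under $f^{-1}$ into $B_r$ with controlled modulus loss, forcing $\operatorname{dist}(0,\partial V) \ge c\, r$ for a universal $c$; then $D = 1/c$ works, and the hypothesis (\ref{eq:rbd}) is what prevents the additive $O(\epsilon)$ errors in the modulus estimates from swamping the multiplicative term $\ln\frac1r$.

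The main obstacle, and the reason (\ref{eq:rbd}) is imposed, is bookkeeping the error terms: all the modulus inequalities come with an error that is additive of size $O(\epsilon)$ in the modulus (equivalently multiplicative of size $1+O(\epsilon)$ after exponentiation), and one needs $\ln\frac1r \gtrsim 1$, hence $r$ bounded away from $1$, \emph{and} the comparison $r \gtrsim \epsilon$ to guarantee that $r^{1/(1+\epsilon)} = r \cdot r^{-\epsilon/(1+\epsilon)}$ stays within a bounded factor of $r$ --- since $r^{-\epsilon/(1+\epsilon)} = \exp\!\big(\tfrac{\epsilon}{1+\epsilon}\ln\tfrac1r\big)$ and $\epsilon \ln\frac1r \le \epsilon\ln\frac{C}{\epsilon} \to 0$ as $\epsilon\to 0$ by (\ref{eq:rbd}). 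Once this single quantitative check is in place, the constants $32$ and $D$ are extracted by choosing $\epsilon$ small enough that $r^{-\epsilon/(1+\epsilon)} \le 2$ throughout the allowed range of $r$, and the proof is complete. I would present the argument as: (i) the quasiconformal modulus inequality bounding $\operatorname{mod}$ of the image annulus, (ii) the Gr\"otzsch/Teichm\"uller inequality converting modulus into a diameter/distance bound, (iii) the elementary estimate $r^{1/(1+\epsilon)} \le 2r$ under (\ref{eq:rbd}), and (iv) the symmetric argument for $f^{-1}$ giving the lower bound.
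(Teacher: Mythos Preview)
Your proposal is correct and arrives at the same bound $\operatorname{dist}(0,\partial V) \le 16\,r^{1/(1+\epsilon)} \le 32r$ (and the symmetric lower bound via $f^{-1}$) that the paper obtains; the key step in both is the estimate $r^{1/(1+\epsilon)} \le 2r$, which you correctly trace to $\epsilon\ln(1/r) \le \epsilon\ln(C/\epsilon) \to 0$ under hypothesis (\ref{eq:rbd}).

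The only difference is one of packaging: the paper invokes Mori's theorem directly (a $K$-quasiconformal self-map of $\mathbb{D}$ fixing $0$ satisfies $|f(z)| \le 16|z|^{1/K}$, citing \cite{Ahl1}), whereas you rederive this H\"older bound from modulus estimates in the style of Lemma~\ref{lem:qclemnew}. Your route is more self-contained but longer; the paper's is a one-line citation. Incidentally, your upper-bound modulus estimate can be sharpened: since $B_\rho \subset V$ for $\rho = \operatorname{dist}(0,\partial V)$, monotonicity gives $\operatorname{mod}(\mathbb{D}\setminus V) \le \frac{1}{2\pi}\ln\frac{1}{\rho}$ without the factor $16$, so the Gr\"otzsch--Teichm\"uller inequality is not actually needed here. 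The factor $16$ in the paper's argument comes from Mori's constant, not from an extremal-annulus bound.
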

\begin{proof}
A $K$-quasiconformal self-map of the disk is H\"{o}lder-continuous, with coefficient 16 and exponent $1/K$ (see \cite{Ahl1}). So for any $z\in \partial B_r$ we have:
\begin{equation}\label{eq:hold}
\left| f(z)\right| \leq 16 r^{1/1+\epsilon} <  16 r^{1-\epsilon}  =16r^{-\epsilon} \cdot r \leq 16 (C)^{\epsilon}\epsilon^{-\epsilon} \cdot r < 32r
\end{equation}
for sufficiently small $\epsilon$ (since $C^\epsilon\to 1$ as $\epsilon\to 0$ and $\epsilon^{-\epsilon} < e^{-e} < 1.45$). \\
This gives the inequality on the right, in (\ref{eq:contain}).\\

For the left inequality  of (\ref{eq:contain}) let $z\in \partial B_r$, that is, $\left| z\right| =r$. Then the H\"{o}lder continuity of $f^{-1}:\mathbb{D}\to \mathbb{D}$ yields:
\begin{equation*}
r = \left|z\right| \leq 16\left| f(z) - f(0) \right|^{1/(1+\epsilon)}  = 16\left| f(z)\right|^{1/(1+\epsilon)}
\end{equation*}
and hence we have:
\begin{equation*}
\left| f(z)\right| \geq \left(\frac{r}{16}\right)^{(1+\epsilon)} \geq r \cdot \frac{\epsilon^\epsilon}{C^\epsilon 16^{1+\epsilon}} 
\end{equation*}
where we have used (\ref{eq:rbd}) for the last inequality.\\
It is easy to verify that for sufficiently small $\epsilon$, we have:
\begin{equation*}
\frac{\epsilon^\epsilon}{C^\epsilon16^{1+\epsilon}} \geq \frac{1/2}{2 \cdot 16^{3/2}} 
\end{equation*}
and hence we can take $D = 256$ in (\ref{eq:contain}).
\end{proof}

\textit{Remarks.} 1. The condition (\ref{eq:rbd}) is necessary, as in general quasiconformal maps are H\"{o}lder continuous with exponent less than $1$ and no better, and (\ref{eq:contain}) then fails for small $r$.\\
2. The left inequality can be thought of a quasiconformal version of the Koebe one-quarter theorem, and it would be interesting to give a better estimate of the constant $D$.

\begin{prop}[Step 4]\label{prop:step4}
There is a subsequence of $\{q_i^\prime\}_{i\geq 1}$ that converges to a meromorphic quadratic differential $q$ on $\Sigma$ with a pole of order $n$ and residue $a$ at $p$. 
\end{prop}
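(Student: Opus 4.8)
The plan is to verify the hypotheses of Lemma \ref{lem:convg} for the sequence of maps $f_i:\Sigma_i^\prime\to\Sigma\setminus p$ produced in Lemma \ref{lem:approx1}, and then invoke that lemma to extract a convergent subsequence in $\widehat{\mathcal{Q}}_m(\Sigma)$, noting that the order and residue are preserved in the limit. Recall from \S8 that $q_i^\prime$ restricted to $U_i$ is the pullback, by a univalent conformal map $g_i$, of the fixed meromorphic quadratic differential (\ref{eq:fixqd}), and that $g_i$ carries $V_i$ onto the neighborhood $U_{H_i}$ of $0\in\mathbb{C}$. So the only thing left to check is the normality of the family $\{g_i\}$, which by Lemma \ref{lem:convg} follows once we establish the two-sided bounds (\ref{eq:vib0}) and (\ref{eq:vib}). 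The first, (\ref{eq:vib0}), is exactly the content of Lemma \ref{lem:uhi} (with the identification $\phi_i:(U_i,p_i)\to(\mathbb{D},0)$ pulled back from the $\phi$-metric on $U_{H_i}$), so no further work is needed there.

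The substantive step is (\ref{eq:vib}), i.e.\ controlling $\mathrm{dist}(0,\partial\phi_i(V_i))$ from above and below by a multiple of $2^{-i}$. Here I would use that $V_i=f_i^{-1}(V)$ with $V=\phi^{-1}(B(2^{-i}))$, so in the coordinate $\phi_i$ the set $\phi_i(V_i)$ is the image of the round disk $B(2^{-i})$ under the $(1+\epsilon_i)$-quasiconformal self-map $\phi_i\circ f_i^{-1}\circ\phi^{-1}$ of $\mathbb{D}$, which fixes $0$ (after composing with the conformal charts; one should note that $\phi_i\circ f_i^{-1}\circ\phi^{-1}$ differs from a genuine q.c.\ self-map of $\mathbb D$ only by precomposition with a conformal map, which does not affect the distortion and can be normalized to fix $0$). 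Now apply Lemma \ref{lem:jyva} with $r=2^{-i}$ and $\epsilon=\epsilon_i$: the hypothesis (\ref{eq:rbd}), namely $2^{-i}\geq \epsilon_i/C$, holds by the refined estimate (\ref{eq:epr2}) which gives $\epsilon_i < B\,2^{-i}$, so $C=B$ works. Lemma \ref{lem:jyva} then yields
\begin{equation*}
\frac{2^{-i}}{D}\leq \mathrm{dist}(0,\partial\phi_i(V_i))\leq 32\cdot 2^{-i},
\end{equation*}
which is precisely (\ref{eq:vib}) with $d=1/D$ and $D$ renamed to $32$.

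With both (\ref{eq:vib0}) and (\ref{eq:vib}) in hand, Lemma \ref{lem:convg} applies and produces a subsequence along which $q_i^\prime\to q\in\widehat{\mathcal{Q}}_m(\Sigma)$; in particular $q$ has a single pole at $p$ of order exactly $n$. It remains to check that the residue of $q$ at $p$ equals $a$. Since each $q_i^\prime$ has analytic residue $\int_{\gamma}\sqrt{q_i^\prime}=a$ for a fixed small loop $\gamma$ around $p_i$ (transported to a fixed loop in $U$ via $f_i$, staying in the region where convergence is uniform — one can take $\gamma$ in the fixed disk $\phi^{-1}(\partial B(1/2))$, say, since for large $i$ the pole is well inside), and since convergence in $\widehat{\mathcal{Q}}_m$ is in particular uniform on compact subsets away from $p$, the integrals $\int_\gamma\sqrt{q_i^\prime}$ converge to $\int_\gamma\sqrt{q}$; hence $\mathrm{Res}_q(p)=a$. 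The main obstacle is the bookkeeping in the previous paragraph: carefully identifying $\phi_i(V_i)$ with a quasiconformal image of a round disk fixing the origin, and confirming that the distortion bound one feeds into Lemma \ref{lem:jyva} is $\epsilon_i$ with the crucial exponential decay (\ref{eq:epr2}) — without (\ref{eq:epr2}) the lower bound in Lemma \ref{lem:jyva} would degrade (cf.\ Remark 1 after that lemma) and normality could fail.
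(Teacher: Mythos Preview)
Your proposal is correct and follows essentially the same route as the paper: verify the distance bounds (\ref{eq:vib0}) via Lemma \ref{lem:uhi}, establish (\ref{eq:vib}) by applying Lemma \ref{lem:jyva} to the $(1+\epsilon_i)$-quasiconformal self-map $\phi_i\circ f_i^{-1}\circ\phi^{-1}$ of $\mathbb{D}$ (with the key input being the decay $\epsilon_i<B\,2^{-i}$ from (\ref{eq:epr2})), and then invoke Lemma \ref{lem:convg}. Your added verification that the residue passes to the limit via uniform convergence on a fixed loop is a welcome detail the paper leaves implicit; the parenthetical caveat about $\phi_i\circ f_i^{-1}\circ\phi^{-1}$ is unnecessary, since this composition is genuinely a $(1+\epsilon_i)$-quasiconformal self-map of $\mathbb{D}$ fixing $0$.
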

\begin{proof} We recapitulate some of the previous discussion in this section:\\
In the construction of the half-plane surface $\Sigma_i^\prime$, one excises a subdisk $V_i\subset U$ and glues it back by a different quasisymmetric map $w$ of a circle (the boundary extension of the conformal map uniformizing $\Sigma\setminus U$ to the rectangular surface $\Sigma_i$)  to form a new conformal surface. The conformal structure on the resulting disk $U_i = (U\setminus V_i) \cup_w V_i$ now admits a uniformizing map $\phi_i:(U_i,p_i)\to (\mathbb{D}, 0)$, and in the quadratic differential metric the disk $\phi_i(U_i)$ is isometric  to a subset of a planar end of residue $a$. It follows that  the quadratic differential  on $\phi_i(U_i)$ is a pullback of the fixed differential (\ref{eq:fixqd}) on $\mathbb{C}$ (with a pole at $0$) via a univalent conformal map. By construction, $U_{H_i}$ corresponds to the subdomain $\phi_i(V_i)$ via this map.\\

We shall verify the convergence criterion of Lemma \ref{lem:convg}. Recall that $\phi: (U,p)\to (\mathbb{D}, 0)$ was the uniformizing map for the (fixed) pointed disk $(U, p)$ on $\Sigma$, and $f_i:(\Sigma_i,U_i)\to (\Sigma,U)$ was a $(1+\epsilon_i)$-quasiconformal map (Lemma \ref{lem:approx1}). Hence the map $f=\phi_i\circ f_i^{-1}\circ \phi^{-1}:\mathbb{D}\to \mathbb{D}$ is a $(1+ \epsilon_i)$-quasiconformal map, and since the disk excised at the beginning of the construction was $\phi^{-1}(B(2^{-i}))$, the image of the disk $B(2^{-i})$ under $f$ is $\phi_i(V_i)$. \\

From  (\ref{eq:epr2}) in the proof of Lemma \ref{lem:approx1}, we have that:
\begin{equation}
\epsilon_i < B 2^{-i}
\end{equation}
for some constant $B>1$, and hence the condition (\ref{eq:rbd}) of Lemma \ref{lem:jyva} is satisfied (here $r=r_i = 2^{-i}$ and $\epsilon=\epsilon_i$). Applying Lemma \ref{lem:jyva} to the map $f$, we then get the  distance bounds (\ref{eq:vib}). The bounds (\ref{eq:vib0}) also hold by Lemma \ref{lem:uhi}, as noted previously. \\

Applying Lemma \ref{lem:convg} to the sequence of half-plane surfaces $\Sigma_i$ with their corresponding half-plane differentials $q_i^\prime$, we have that there is a limiting meromorphic quadratic differential $q\in \widehat{\mathcal{Q}}_m(\Sigma)$. 
\end{proof}

\section{Step 5: A limiting half-plane surface}
 
The set $\mathcal{Q}_{\mathcal{D}^\prime}$ of half-plane differentials associated with the local data 
\begin{center}\label{eq:data}
$\mathcal{D}^\prime  = \{ (n_j, a_j)| n_j\in \mathbb{N}, n_j\geq 4, a_j\in \mathbb{R}_{\geq 0},$  where $a_j=0$ for $n_j$ odd.$\}$
\end{center}
of order of poles and residues at the marked points, is a subset of $\widehat{\mathcal{Q}}_m$. In this section we shall prove that this subset is closed. To simplify the discussion, we shall continue to consider the case of a \textit{single} pole of order $n$ and residue $a$. The proof of the general case follows by an easy extension of the arguments.

\begin{thm}\label{thm:cpct}
Let $\Sigma_i$ be a sequence of half-plane surfaces in $\mathcal{Q}_{\mathcal{D}^\prime}$ such that the corresponding half-plane differentials $q_i \to q \in \widehat{\mathcal{Q}}_m$. Then $q$ is a half-plane differential in $\mathcal{Q}_{\mathcal{D}^\prime}$ .
\end{thm}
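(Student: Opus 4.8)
The plan is to reduce to the compact core of the limit differential $q$ and transfer the half-plane decomposition of the approximating surfaces to it. First I would unpack the convergence $q_i\to q$ in $\widehat{\mathcal Q}_m$: quasiconformal homeomorphisms $h_i\colon\Sigma_i\to\Sigma$ with dilatation tending to $1$ and $(h_i)_\ast q_i\to q$ locally uniformly on $\Sigma\setminus\{p\}$, so the marked Riemann surface does not degenerate, the pole at $p$ has order exactly $n$, and — the residue being the continuous functional $\gamma\mapsto\int_\gamma\sqrt{\cdot}$ — the residue of $q$ is $\lim a_i=a$, which is real, so $q$ has no spiral domains. By Lemma \ref{lem:gb} the zeros of $q_i$ number $4g-4+n$ with multiplicity; after a subsequence they converge, and by local uniform convergence their limits are exactly the zeros of $q$, so no critical point escapes to $p$. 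Crucially, by Theorem \ref{thm:streb} the pole $p$ of $q$ already carries a neighbourhood isometric to a planar end of residue $a$ built from $n-2$ notched half-planes; these $n-2$ pieces are maximal horizontal-trajectory domains, i.e.\ $q$ has (at least) $n-2$ half-plane domains, and nothing other than these can have a trajectory-end at $p$. Fixing a truncation and setting $S=\Sigma\setminus\mathcal P_T$, $q|_S$ is holomorphic on the compact bordered surface $S$, so $\mathrm{Area}_q(S)<\infty$, and the matching truncations $S_i\subset\Sigma_i$ satisfy $\mathrm{Area}_{q_i}(S_i)\to\mathrm{Area}_q(S)$. It then suffices to rule out cylinder and minimal domains of $q$: then every component of $\Sigma$ minus the critical graph must abut the planar end, hence be one of the $n-2$ half-plane domains, and $q$ is a half-plane differential with data $(n,a)$.

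The heart of the argument, and the step I expect to be the main obstacle, is controlling the finite critical graphs. Let $\mathcal G_i$ be the union of the zeros of $q_i$ and its finite horizontal saddle connections; since $\Sigma_i$ is a half-plane surface (Definition \ref{defn:hp}), $\mathcal G_i$ together with the separatrices entering the planar end is the entire critical graph, $\Sigma_i$ minus it is a disjoint union of $n-2$ open half-planes, and every non-critical horizontal trajectory of $q_i$ is a properly embedded line with both ends at $p$. The number of edges of $\mathcal G_i$ is bounded independently of $i$ (a prong count at the zeros, using Lemma \ref{lem:gb}), so after a subsequence the combinatorial type is fixed and each edge lies in a fixed homotopy class rel.\ the converging zero set. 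I would then show the total $q_i$-length of $\mathcal G_i$ stays bounded: an unboundedly long saddle connection $s_i$, paired with a transverse arc of length of order $1$, produces an embedded annulus about a fixed homotopy class whose modulus is, up to bounded factors, $\mathrm{length}(s_i)$ over the collar width, while a length--area lower bound on the same extremal length forces that ratio to stay comparable to $1/\mathrm{Ext}_\Sigma$ of the limiting class; hence a long saddle connection can occur only with a collar of width $\to\infty$, i.e.\ a homotopy class of modulus $\to\infty$ — impossible for a non-peripheral class since the marked structures converge, and impossible for a peripheral one since a saddle connection wrapping into the pole would force the planar-end data near $p$ (controlled via Lemma \ref{lem:uh}) to degenerate. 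Granting the bound, a further subsequence makes $h_i(\mathcal G_i)$ Hausdorff-converge to a finite graph $\mathcal G_\infty\subset S$ of horizontal saddle connections of $q$ (edges may collapse as zeros collide, but no new long edges appear).

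With the critical graphs under control, I would rule out cylinder and minimal domains by a robustness argument. Such a domain $D$ of $q$ contains a horizontal trajectory arc of arbitrarily large $q$-length $L$ confined to a fixed compact set $C\subset S$ (a core curve traversed many times; a long sub-arc of a recurrent leaf), together with a flow box of definite transverse width. Transporting by $h_i$ and letting $i\to\infty$ for each fixed $L$, this arc is within $o(1)$ of a genuine horizontal $q_i$-trajectory still confined to $h_i^{-1}(C)$; but in a half-plane surface such a confined trajectory must run along $\mathcal G_i$ — in the interior of a half-plane it flows straight to $p$ and leaves any fixed compact set within bounded length, and the area bound on $h_i^{-1}(C)$ caps how long it can stay there. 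Hence $L\le\mathrm{length}_{q_i}(\mathcal G_i)+O(1)$, contradicting the uniform bound once $L$ is large. Therefore every component of $\Sigma\setminus\bigl(\mathcal G_\infty\cup\{\text{separatrices to }p\}\bigr)$ abuts the planar end and is one of the $n-2$ half-plane domains; gluing these along $\mathcal G_\infty$ and the separatrices exhibits $(\Sigma,q)$ as obtained from $n-2$ half-planes glued along boundary intervals, with a pole of order $n$ at $p$ and, by Definition \ref{defn:metres} with Theorem \ref{thm:streb}, metric residue $a$. Thus $q\in\mathcal Q_{\mathcal D'}$, and $\mathcal Q_{\mathcal D'}$ is closed in $\widehat{\mathcal Q}_m$.
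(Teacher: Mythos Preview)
Your overall strategy—show directly that the limit $q$ can have no cylinder or minimal domain, hence every horizontal domain is one of the $n-2$ half-planes coming from the planar end—is reasonable and different from the paper's, which instead \emph{constructs} a candidate limiting half-plane surface $\Sigma_{\mathcal G}$ from a limiting metric spine and then produces $(1+\epsilon_i)$-quasiconformal maps $\Sigma_i\to\Sigma_{\mathcal G}$ (Lemma \ref{lem:approx2}).  But the two genuine gaps in your argument are exactly the places where the paper has to do the real work.

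First, the sentence ``after a subsequence the combinatorial type is fixed and each edge lies in a fixed homotopy class rel.\ the converging zero set'' is doing too much. That the abstract graph stabilises is fine, but the \emph{embedding} of the spine in the marked surface need not: a priori the spines $\mathcal G_i$ could twist more and more around a fixed simple closed curve while remaining combinatorially the same graph with the same edge lengths. The paper spends Lemmas \ref{lem:apriorib}--\ref{lem:notwist} and \ref{lem:toplem1}--\ref{lem:toplem2} on precisely this: an area bound on the compact core forces each edge to have uniformly bounded twist about every curve in a marking, whence there are only finitely many isotopy classes of spines and a subsequence stabilises (Corollary \ref{cor:marking}).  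Without this, you cannot speak of ``a fixed homotopy class'' for a saddle connection, and your subsequent length bound collapses.

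Second, the modulus argument for bounding saddle-connection length is not correct as written. A saddle connection is an \emph{arc}, not a closed curve; pairing it with a transverse arc yields a flow-box, not an annulus. If instead you mean a cycle in the spine of $q_i$-length $L$ with a flat collar of width $\delta$, the embedded annulus has modulus $\delta/L$, not $L/\delta$, so a long cycle gives a \emph{small} modulus—no contradiction with bounded extremal length. The paper's mechanism is the other way round: an edge twisting many times through a maximal annulus sweeps out a collar of definite width (from the adjacent half-plane) and hence definite area per twist; since the total area of the compact core is bounded, the twisting—and then the length—is bounded (Lemma \ref{lem:notwist} and Lemma \ref{lem:forest}). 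Once you have bounded length \emph{and} fixed isotopy class, your robustness argument for excluding cylinder/minimal domains could be made to work, but the trajectory-transfer step (``within $o(1)$ of a genuine horizontal $q_i$-trajectory'') also needs care, since $h_i$ is only quasiconformal and does not a priori carry horizontal leaves to nearly-horizontal ones; the paper sidesteps this entirely by comparing half-plane structures directly rather than reasoning about leaves of $q$.
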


This together with Proposition \ref{prop:step4} shall complete the proof of the following:

\begin{prop}\label{prop:step5}
The convergent subsequence of $\{q_i^\prime\}_{i\geq 1}$ converges to a half-plane differential $q$ on $\Sigma$ with a pole of order $n$ and residue $a$ at $p$. 
\end{prop}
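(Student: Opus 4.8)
\textbf{Proof proposal for Proposition~\ref{prop:step5}.} The plan is to obtain the statement as the formal combination of the two preceding results: Proposition~\ref{prop:step4}, which extracts a convergent subsequence carrying the correct analytic data, and Theorem~\ref{thm:cpct}, which guarantees that the limit again carries a half-plane structure. As a first step I would record that each $\Sigma_i^\prime$ genuinely belongs to $\mathcal{Q}_{\mathcal{D}^\prime}$ with data $\{(n,a)\}$. Indeed, by the construction of \textit{Step 2} the surface $\Sigma_i^\prime$ is obtained by gluing the rectangular surface $(\Sigma_i,q_i)$ to a planar end $\mathcal{P}_{H_i}$ along matching polygonal boundaries; after filling in the notches by the rectangles, $\Sigma_i^\prime$ is a complete singular flat surface assembled from $n-2\geq 2$ euclidean half-planes glued along a finite partition of their boundaries, hence a half-plane surface in the sense of Definition~\ref{defn:hp}. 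The planar end has $n-2$ half-planes, so the unique pole has order $(n-2)+2=n$, and the boundary lengths prescribed in \textit{Step 2} were chosen precisely so that the metric residue equals $a$ (Definitions~\ref{defn:pend} and \ref{defn:metres}). Thus $q_i^\prime\in\mathcal{Q}_{\mathcal{D}^\prime}$ for every $i$.

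Next I would invoke Proposition~\ref{prop:step4}: after passing to the subsequence referred to in the statement, $q_i^\prime\to q$ in $\widehat{\mathcal{Q}}_m(\Sigma)$, where $q$ is a meromorphic quadratic differential on $\Sigma$ with a pole of order exactly $n$ and residue $a$ at $p$. The hypotheses of Theorem~\ref{thm:cpct} are now satisfied with $\Sigma_i:=\Sigma_i^\prime$: the $\Sigma_i^\prime$ lie in $\mathcal{Q}_{\mathcal{D}^\prime}$ and the associated half-plane differentials converge in $\widehat{\mathcal{Q}}_m$. Hence $q$ is itself a half-plane differential lying in $\mathcal{Q}_{\mathcal{D}^\prime}$. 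Since membership in $\mathcal{Q}_{\mathcal{D}^\prime}$ leaves the tuple $(n_j,a_j)$ unspecified, I would then pin the data down using the last clause of Proposition~\ref{prop:step4}, namely that the pole of $q$ has order $n$ and residue $a$. Combining the two, $q$ is a half-plane differential on $\Sigma$ with a single pole of order $n$ and residue $a$ at $p$, which is exactly the assertion of Proposition~\ref{prop:step5}. (By \textit{Step 3} the underlying surface is then conformally $\Sigma\setminus p$, but that is not part of the present statement.)

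The substantive content of this proposition is entirely imported. Proposition~\ref{prop:step4} is already established: it rests on the normality criterion of Lemma~\ref{lem:convg}, whose hypotheses were verified via the distance bounds of Lemmas~\ref{lem:uhi} and \ref{lem:jyva}. Theorem~\ref{thm:cpct} --- that a limit in $\widehat{\mathcal{Q}}_m$ of half-plane differentials is again a half-plane differential --- is the genuinely nontrivial ingredient, and its proof is carried out separately. Within the deduction above, the only point requiring care rather than an outright citation is the data-matching: checking that the \textit{Step 2} construction really outputs half-plane differentials of pole order $n$ and residue $a$, so that Theorem~\ref{thm:cpct} applies and its conclusion, taken together with Proposition~\ref{prop:step4}, yields a limit with the \emph{prescribed} data rather than merely \emph{some} half-plane differential. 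Everything else in the proof of Proposition~\ref{prop:step5} is a formal chaining of the two results.
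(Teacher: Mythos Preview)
Your proposal is correct and matches the paper's approach exactly: the paper states that Theorem~\ref{thm:cpct} together with Proposition~\ref{prop:step4} completes the proof of Proposition~\ref{prop:step5}, and you have spelled out precisely this combination, with the added (and appropriate) verification that each $\Sigma_i^\prime$ lies in $\mathcal{Q}_{\mathcal{D}^\prime}$ with the correct data $(n,a)$.
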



\subsection*{Proof of Theorem \ref{thm:cpct}}

Consider a family of half-plane differentials $q_i \in \mathcal{Q}_{\mathcal{D}^\prime}$ that converge in $\widehat{\mathcal{Q}}_m$. The goal is to show that after passing to a subsequence the corresponding sequence of half-plane surfaces converges geometrically to a half-plane surface. This shall be done in this section by considering the \textit{metric spines} (see Definition \ref{defn:mspine}) of the sequence and  constructing the limiting half-plane surface from the limiting metric spine (Definition \ref{defn:limit}).  The geometric convergence is not quite a metric or biLipschitz one, as edges of the spines might collapse. A crucial observation is that by the assumption of convergence in $\widehat{\mathcal{Q}}_m$, in the sequence of metric spines, their embeddings in the surface do not get worse (Lemma \ref{lem:notwist}), cycles do not collapse (Lemma \ref{lem:forest}), and hence the limiting metric spine yields the same marked topological surface. The proof is completed by showing this limiting half-plane surface is indeed a \textit{conformal} limit of the sequence (Lemma \ref{lem:approx2}) by building quasiconformal maps whose dilatation tends to $1$.

\subsubsection*{Spines}

\begin{defn}\label{defn:tspine} A \textit{topological spine} on a surface $S$ with a set of punctures $P$ is an embedded graph that $S\setminus P$ deform-retracts onto. Moreover, we assume each vertex other than the punctures has valence at least three, so there are no unnecessary vertices.
\end{defn}

\begin{defn}\label{defn:mspine} Associated to the half-plane differential $q$ on $\Sigma$ is its \textit{metric spine} $\mathcal{G}(q)$, which is the metric graph on the half-plane surface obtained from the boundaries of the half-planes after identifications. This is a topological spine as in the previous definition - the retraction can be defined by collapsing along the vertical rays on each half-plane.
\end{defn}

The metric spines $\mathcal{G}_i = \mathcal{G}(q_i)$ are topologically equivalent since they are all spines of $\Sigma \setminus P$.  After passing to a subsequence, one can assume that  they are isomorphic. Let $\mathcal{G}_{top}$ denote this fixed finite graph, such that each $\mathcal{G}_i$ is just an assignment of lengths to its set of edges $\mathcal{E}$. By passing to further subsequence we can assume that the edge-lengths $l_i(e)$ for $e\in \mathcal{E}$ converge to a collection of non-negative reals $\{l(e)\}_{e\in \mathcal{E}}$.

\begin{defn}\label{defn:collocus}
The \textit{collapsing locus} $\mathcal{C}$ of the sequence is the set of edges of $\mathcal{G}_{top}$ whose edge-lengths tend to zero, and the \textit{diverging locus} $\mathcal{D}$ of the sequence is the set of edges whose lengths tend to infinity. 
\end{defn}

The goal of the next sections is to show that in fact, after passing to a subsequence, the \textit{embeddings} of these  spines can also be assumed to be the same upto isotopy, that is, the metric spines are identical as \textit{marked} graphs on the surface.  From this it will follow that the collapsing locus  $\mathcal{C}$ has no cycles and $\mathcal{D}$ is empty (Lemma \ref{lem:forest}).

\subsubsection*{Bounded twisting and no collapsing cycles}

\begin{defn}[Twist]  Let $S$ be a surface and  $A\subset S$ an annulus with core a non-trivial simple closed curve $\gamma$. For an embedded arc $\tau$ between the boundary components of $A$, the \textit{twist} of $\tau$ around $\gamma$ relative to $A$ is an integer denoting the number of times $\tau$ goes around $\gamma$ in $A$, upto isotopy fixing the endpoints $\tau \cap \partial A$. (We ignore signs by taking an absolute value.)
\end{defn}

\textit{Remark.} The above twist can be thought of as the distance in the curve complex of the annulus $A$ (see also \S3 in  \cite{Min2}).\\

The following notion is to ensure that \textit{all} non-trivial twists about the core curve are captured in the annulus in the above definition: 

\begin{defn}[Maximal annulus] Given a non-trivial simple closed curve $c$ on a surface, its associated \textit{maximal annulus} $A(c)$ is an embedded open annulus with core curve $c$ such that the  complement of its closure is either empty, or has components which are either disks or once-punctured disks. 
\end{defn}

\textit{Remark.} An example is the maximal embedded annulus realizing the extremal length for $\gamma$ - the complement of its interior is a graph on the surface. In our case, we shall embed the annulus away from disks around the punctures, which gives the once-punctured disks in its complement.

\begin{lem}\label{lem:dtwist} Let $D_c:S\to S$ denote the Dehn twist around a simple closed curve $c$, and $M$ be a positive integer. Then for any maximal annulus $A(c)$ and any simple closed curve $\gamma$ that intersects $c$, the twist of some component arc of $D^n_c(\gamma) \cap A(c)$ about $c$ is greater than $M$ for all $n$ sufficiently large.
\end{lem}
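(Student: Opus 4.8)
The statement is about iterating a Dehn twist $D_c$ around a simple closed curve $c$ and observing that, inside the maximal annulus $A(c)$, the image $D_c^n(\gamma)$ must wind around the core more and more as $n\to\infty$. The natural tool is the covering space perspective on twisting: let $\widetilde S_c \to S$ be the cover associated to the cyclic subgroup $\langle c\rangle \subset \pi_1(S)$, so $\widetilde S_c$ is an open annulus compactified to a closed annulus $\overline{\widetilde S_c}$ with two boundary circles at infinity. The maximal annulus $A(c)$ embeds in $\widetilde S_c$ essentially (it lifts homeomorphically to a neighborhood of the core), and by its defining property — the components of the complement of $\overline{A(c)}$ are disks or once-punctured disks — any essential arc or curve in $S$ meeting $c$ lifts to an arc in $\overline{\widetilde S_c}$ joining the two boundary circles, and its isotopy class rel endpoints is detected \emph{inside} $A(c)$: no twisting can ``escape'' $A(c)$ into the complementary pieces. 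This is precisely why the definition of maximal annulus is set up the way it is.

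\textbf{Key steps.} First I would fix a component arc $\tau \subset \gamma \cap A(c)$ crossing the annulus (one exists since $\gamma$ intersects $c$), and pass to the cyclic cover $\widetilde S_c$. Second, I would record the elementary fact that in the annular cover the Dehn twist $D_c$ acts on the set of essential crossing arcs (up to isotopy rel endpoints on the boundary circles) by adding $1$ to the winding number; equivalently, lifting $D_c^n(\tau)$ to $\overline{\widetilde S_c}$ shifts one endpoint around by $n$ full turns relative to the other, so the winding number of the lift of $D_c^n(\tau)$ is $w_0 + n$ (up to a bounded error from the choice of complementary arc of $\gamma$, and the sign convention, which we discard via absolute values). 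Third, I would translate: the twist of an arc about $c$ relative to $A(c)$, as defined in the paper, equals exactly this winding number of the corresponding lift, by maximality of $A(c)$ — this is the one place the hypothesis is used, and it is essentially the statement that $A(c)$ ``sees'' all the twisting. Hence the twist of the appropriate component arc of $D_c^n(\gamma)\cap A(c)$ is $\ge n - (w_0 + \text{const})$, which exceeds $M$ once $n$ is large enough. Finally I would note that $D_c^n(\gamma)$ may have several component arcs in $A(c)$, but since $D_c$ is supported in a neighborhood of $c$ it does not change the number of intersection points $\gamma\cap c$, and at least one of these component arcs inherits the full winding — so ``some component arc'' in the statement is all that is claimed.

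\textbf{Main obstacle.} The only genuinely delicate point is Step 3: verifying that, because $A(c)$ is a \emph{maximal} annulus, the combinatorial twist defined in the paper (number of times $\tau$ goes around $c$ inside $A(c)$, up to isotopy fixing $\tau \cap \partial A$) is not under-counting — i.e. that a Dehn twist applied in $S$ cannot be ``absorbed'' by sliding $\tau$ through the complementary disks or once-punctured disks. This needs the observation that an isotopy of $\tau$ in $S$ fixing endpoints, which decreases the $A(c)$-twist, would have to push $\tau$ across a complementary component; but such a component is a disk or once-punctured disk, across which an essential crossing arc cannot be isotoped without creating intersections with $c$ or changing its endpoint data. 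Once this is in place the rest is bookkeeping in the cyclic cover. Everything else — continuity of the count, the bounded error from the sign convention and the choice of completing arc — is routine.
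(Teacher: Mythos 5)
Your proposal is correct and takes essentially the same approach as the paper: both hinge on the observation that maximality of $A(c)$ forces all the twisting of $D_c^n(\gamma)$ to be detected inside $A(c)$, since it cannot be absorbed by the complementary disks and once-punctured disks. Your annular-cover $\widetilde S_c$ formalism is a cleaner way to organize the winding count, but the step you single out as the ``main obstacle'' is precisely the paper's one-line argument that the embedded image curve cannot run more than once along the arcs connecting the complementary disks and that twisting inside those disks can be isotoped away.
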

\begin{proof} It suffices to show that the ``twists" of the arcs $D^n_c(\gamma) \cap A(c)$ about $c$ are supported in the interior of $A(c)$, that is, cannot be isotoped away from the annulus. This holds because $A(c)$ is maximal, that is, the complement of its interior comprises  closed disks connected by arcs. The image curve $D^n_c(\gamma)$ being embedded cannot run along these arcs more than once, and any twisting in the interior of the disks can be isotoped to be trivial.\end{proof}

The following finiteness result is well-known. In the statement ``sufficiently large" can be taken to be a finite set of curves consisting of a complete marking (a maximal set of pants curves together with curves intersecting each), or alternatively, the Humphries generators for the mapping class group of $S$.

\begin{lem}\label{lem:toplem1} Let $\mathcal{C}$  be a sufficiently large collection of simple closed curves on a surface $S$. For each $N>0$, the set :
\begin{center}
$\mathcal{S} = \{ \gamma$ is a simple closed curve $|$ Each component of $\gamma \cap A(c)$ has twist less than $N$ around $c$, for each $c\in \mathcal{C}\}$ 
\end{center}
is a finite set.
\end{lem}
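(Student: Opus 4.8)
The plan is to argue by contradiction. If $\mathcal S$ were infinite it would contain an infinite sequence of pairwise distinct simple closed curves $\gamma_1,\gamma_2,\dots$, and I would show that the hypothesis of bounded twisting relative to $\mathcal C$ forces, for some $c\in\mathcal C$, the twist of a component arc of $\gamma_i\cap A(c)$ to be unbounded, contradicting $\gamma_i\in\mathcal S$. The bookkeeping I would use is Dehn--Thurston coordinates: complete the pants curves contained in $\mathcal C$ to a pants decomposition $c_1,\dots,c_k$ of $S$, and record for a curve $\gamma$ the intersection numbers $m_j=i(\gamma,c_j)$ and the twisting numbers $t_j=t_{c_j}(\gamma)\in\mathbb Z$, the latter measured against the fixed maximal annuli $A(c_j)$ and a reference spanning arc in each. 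Recall also that since $\mathcal C$ is ``sufficiently large'' it is \emph{binding}: the components of the complement of $\bigcup_{c\in\mathcal C}c$ are disks and once-punctured disks, so for a fixed auxiliary hyperbolic metric $X_0$ one has $\ell_{X_0}(\gamma)\asymp\sum_{c\in\mathcal C}i(\gamma,c)$ for every simple closed curve $\gamma$; in particular finiteness of $\mathcal S$ would follow once the coordinates $(m_j,t_j)$ are bounded over $\mathcal S$.

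Two kinds of estimates enter. First, the elementary fact that two disjoint spanning arcs of an annulus differ in twist by at most one translates ``every component of $\gamma\cap A(c_j)$ has twist less than $N$'' into a bound of the form $|t_j|\le N(m_j+1)$ on the twisting coordinates in terms of the intersection coordinates. Second, $\mathcal C$ also contains, for each $j$, a transversal curve $c_j^\ast$ crossing $c_j$; applying the twist bound to $\gamma$ around $c_j^\ast$ and combining it with the standard comparison relating large twisting of $\gamma$ about $c_j$ to large intersection of $\gamma$ with $c_j^\ast$ (the quantitative shadow of Lemma \ref{lem:dtwist}: twisting that is genuinely supported in the maximal annulus cannot be isotoped away and therefore registers as intersection with a transverse curve), I would feed the two families of inequalities into one another to conclude that the $m_j$ are themselves bounded over $\mathcal S$.

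Granting that, the proof closes quickly: admissible Dehn--Thurston tuples $(m_1,t_1,\dots,m_k,t_k)$ with all entries bounded form a finite set (the $m_j$ additionally obeying parity and triangle constraints in each pair of pants), and the Dehn--Thurston parametrization is injective on isotopy classes of simple closed curves, so $\mathcal S$ is finite. As a soft alternative --- and this is the natural route when $\mathcal C$ is taken to be the Humphries generators --- one passes to a limit $[\gamma_i]\to[\lambda]$ in Thurston's boundary $\mathcal{PML}(S)$; the bounded-twisting hypothesis forces $\lambda$ to meet each $c\in\mathcal C$ transversally without spiralling into it, and ``sufficiently large'' is precisely the statement that no projective measured lamination is invisible to all of these annular measurements, so $[\lambda]$ is pinned down. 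One then checks that a sequence of distinct curves converging to such a $[\lambda]$ must eventually be obtained from a fixed curve by arbitrarily large powers of a Dehn twist, again violating the twist bound.

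The main obstacle is the coupling step of the second paragraph: deducing a bound on the intersection numbers $i(\gamma,c_j)$ from twist bounds alone. Per-arc (component) twisting around a curve is by itself insensitive to how many times $\gamma$ crosses that curve, so this step genuinely uses the binding property of $\mathcal C$ --- every mechanism by which $\gamma$ could acquire large intersection with the pants curves must resurface as large twisting around some transversal or auxiliary curve of $\mathcal C$ --- and making that implication uniform over all simple closed curves is where the care is needed; the remainder of the argument is routine.
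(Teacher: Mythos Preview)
Your approach is genuinely different from the paper's, and worth comparing. The paper argues via the mapping class group: simple closed curves fall into finitely many $\mathrm{MCG}(S)$-orbits, and $\mathrm{MCG}(S)$ is virtually generated by Dehn twists about the curves in $\mathcal{C}$; since by Lemma~\ref{lem:dtwist} high powers of a twist about $c\in\mathcal{C}$ force large twisting of the image curve in $A(c)$, only finitely many mapping classes can carry a given $\gamma_j$ into $\mathcal{S}$. This route is short and uses Lemma~\ref{lem:dtwist} as its single substantive input; it never needs to bound intersection numbers directly.

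Your Dehn--Thurston route is a reasonable alternative, but the step you flag as ``the main obstacle'' is a real gap as written. From the per-arc twist bound you correctly extract $|t_j|\le N(m_j+1)$, but this is vacuous until the $m_j$ are controlled, and your mechanism for that --- ``large twisting of $\gamma$ about $c_j$ forces large intersection with $c_j^\ast$'' --- runs the wrong direction: you have already bounded the twisting about $c_j$ only in terms of $m_j$, so feeding the two inequalities into each other does not close. What you actually need is that large \emph{intersection} with the pants curves forces large per-arc twisting in some $A(c)$ for $c\in\mathcal{C}$, and that is precisely the nontrivial content the paper's $\mathrm{MCG}$ argument supplies in one stroke. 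Your $\mathcal{PML}$ alternative has the same lacuna: a sequence of distinct simple closed curves with bounded per-arc twisting about every $c\in\mathcal{C}$ need not, a priori, be eventually related by large powers of a single Dehn twist --- that conclusion again requires the generation statement you have not invoked. If you want to salvage the coordinate approach, the cleanest fix is to import exactly what the paper uses: write an arbitrary mapping class as a word in twists about $\mathcal{C}$ and appeal to Lemma~\ref{lem:dtwist}.
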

\begin{proof}
There are finitely many curves $\gamma_1, \gamma_2,\ldots \gamma_k$ on $S$ upto the action of the mapping class group $MCG(S)$, which is virtually generated by Dehn twists around $\mathcal{C}$.  By Lemma \ref{lem:dtwist} powers of a Dehn twist around $c \in \mathcal{C}$ increases the twist of some component of $\gamma_j \cap A(c)$ around $c$, and hence by the condition that twists are bounded there are only finitely many mapping classes $g_1,\ldots g_N$  such that $g_i\cdot \gamma_j \in \mathcal{S}$ (where $1\leq i\leq N$, and $1\leq j\leq k$). Hence $\mathcal{S}$ is finite.
\end{proof}

As a consequence we have the finiteness of spines with a similar ``bounded twisting" condition:

\begin{lem}\label{lem:toplem2} Let $\mathcal{C}$  be a sufficiently large collection of simple closed curves on a surface $S$. For each $N>0$, the set :
\begin{center}
$\mathcal{M} = \{ m$ is a topological spine for $S\setminus P$ $|$ Each component of $e\cap A(c)$ has twist less than $N$ around $c$, for each edge $e\in m$ and $c\in \mathcal{C}\}$ 
\end{center}
is a finite set.
\end{lem}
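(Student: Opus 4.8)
The plan is to reduce the statement to the case of simple closed curves, i.e.\ to Lemma~\ref{lem:toplem1}, after first establishing a purely combinatorial finiteness for the underlying graphs. First I would bound the complexity of a topological spine: if $m$ has $V$ vertices and $E$ edges then $V-E=\chi(S\setminus P)=2-2g-|P|$, while every vertex of $m$ has valence at least three, so $2E\ge 3V$, and hence $V\le 2(2g+|P|-2)$ and $E\le 3(2g+|P|-2)$. Thus there are only finitely many isomorphism types of abstract graph that can underlie a spine of $S\setminus P$, and for each such graph only finitely many ribbon (fatgraph) structures with $|P|$ boundary cycles and the correct genus. Since a ribbon structure determines its thickened compact oriented surface with boundary up to homeomorphism, and this surface is a compact core of $S\setminus P$, a ribbon structure on $m$ determines the embedded spine up to a homeomorphism of the pair $(S,P)$. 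Consequently the set of \emph{all} topological spines, and in particular $\mathcal M$, breaks into only finitely many orbits under the action of the mapping class group $\mathrm{MCG}(S,P)$.

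It therefore suffices to show that a single $\mathrm{MCG}$-orbit meets $\mathcal M$ in a finite set, and here I would run the argument of Lemma~\ref{lem:toplem1}. Fix an orbit representative $m_0$. Because $S\setminus P$ deformation retracts onto $m_0$, the complementary regions of $m_0$ in $S\setminus P$ are once-punctured disks, which contain no essential simple closed curve; hence every $c\in\mathcal C$ must cross $m_0$, and so must cross some edge of $m_0$. Now $\mathrm{MCG}(S,P)$ is virtually generated by Dehn twists about the (sufficiently large) collection $\mathcal C$, and the proof of Lemma~\ref{lem:dtwist} applies equally to an embedded \emph{arc} in place of a simple closed curve --- it used only that the curve is embedded and that $A(c)$ is maximal, never that it is closed. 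Thus applying a large power of $D_c$ to an edge of $m_0$ that crosses $c$ produces a component of that edge's intersection with $A(c)$ whose twist about $c$ exceeds $N$. Arguing exactly as in Lemma~\ref{lem:toplem1}, the hypothesis that all such twists stay below $N$ then permits only finitely many mapping classes $g$ with $g\cdot m_0\in\mathcal M$; combined with the finiteness of orbits, $\mathcal M$ is finite.

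The step I expect to be the main obstacle is the last quantitative one, which is already the delicate point in Lemma~\ref{lem:toplem1}: one must rule out that an unbounded family of mapping classes keeps the twist of every edge about every $c\in\mathcal C$ below $N$, i.e.\ that the twisting contributions of the Dehn-twist factors persistently cancel. The way I would make this precise is to exploit that $m_0$ has only boundedly many edges and $\mathcal C$ only finitely many curves, so there are only finitely many ``slots" in which twisting is recorded, while a high power of any single Dehn-twist generator already forces one of these slots past $N$ by (the arc version of) Lemma~\ref{lem:dtwist}; hence an unbounded family of mapping classes cannot keep all slots bounded. The only other point requiring care --- verifying that one may track twisting edge-by-edge rather than for closed curves --- is, as noted, automatic from the proof of Lemma~\ref{lem:dtwist}.
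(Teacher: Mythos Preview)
Your argument is correct, but it takes a more elaborate route than the paper's. The paper's proof is essentially a two-line reduction: any cycle in a spine $m\in\mathcal M$ is an embedded simple closed curve on $S$, and since a cycle is a concatenation of boundedly many edges (the same Euler-characteristic bound you derive), each of which has twist less than $N$ about every $c\in\mathcal C$, the cycle itself has bounded twist about each $c$. Hence each cycle lies in the finite set $\mathcal S$ of Lemma~\ref{lem:toplem1}, and since a spine is determined (up to isotopy) by the homotopy classes of its cycles together with its combinatorial type, $\mathcal M$ is finite.

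Your approach instead decomposes $\mathcal M$ into finitely many $\mathrm{MCG}(S,P)$-orbits via the ribbon-graph classification, and then bounds each orbit by rerunning the Dehn-twist argument of Lemma~\ref{lem:toplem1} on \emph{edges} viewed as arcs. This buys you something: it makes explicit why ``finitely many homotopy classes of cycles implies finitely many spines,'' a step the paper leaves implicit. On the other hand, the paper's route avoids having to extend Lemma~\ref{lem:dtwist} from curves to arcs and simply invokes Lemma~\ref{lem:toplem1} as a black box. Both approaches ultimately rest on the same delicate point you flag (ruling out unbounded cancellation among Dehn-twist factors); by passing through cycles, the paper inherits that step from Lemma~\ref{lem:toplem1} rather than revisiting it edge-by-edge.
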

\begin{proof}
It suffices to show that each cycle in a spine in $\mathcal{M}$ corresponds to finitely many possible homotopy classes of curves on the surface. Since there are uniformly bounded number of edges in the spine (depending only on the topology of the surface), and each edge of the cycle has bounded twisting around each $c\in\mathcal{C}$, so does the embedded cycle on the surface, and the finiteness follows from the previous lemma. \end{proof}

Consider now the sequence of half-plane surfaces $\Sigma_i$ and the metric spines $\mathcal{G}_i$. 

\begin{lem}\label{lem:apriorib} There exists a choice of  disk neighborhoods $U_i\subset \Sigma_i$ around the pole of $q_i$,  for each $i\geq 1$, such that for the sequence of singular flat surfaces $S_i = \Sigma_i \setminus U_i$  we have:\\
(1) The sequence of areas $Area(S_i)  = \int\limits_{S_i} \lvert q_i\rvert$ is uniformly bounded from above.\\
(2) For any simple closed curve $\gamma$ on $S$, the length in the $q_i$-metric of any curve in $S_i$ homotopic to $\gamma$ is uniformly bounded from below.
\end{lem}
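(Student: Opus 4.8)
The plan is to choose the neighbourhoods $U_i$ so that, up to the identifications coming from the convergence, they are a \emph{single fixed} neighbourhood of the pole; then the truncated surfaces $S_i$ become conformally a fixed compact piece of $\Sigma\setminus p$, and both estimates follow from $q_i\to q$ together with the fact that the limit $q$ is one fixed meromorphic quadratic differential whose only pole, of order exactly $n$, is at $p$. Concretely, the convergence $q_i\to q$ in $\widehat{\mathcal{Q}}_m$ (see Appendix~A) provides, for every compact $K\subset\Sigma\setminus p$ and all large $i$, conformal embeddings $h_i\colon K\hookrightarrow\Sigma_i$ with $h_i^{\ast}q_i\to q$ uniformly on $K$, such that $\Sigma_i\setminus h_i(K)$ is a neighbourhood of the pole of $q_i$. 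Fix once and for all a small conformal disk $B$ around $p$, and set $U_i:=\Sigma_i\setminus h_i(\overline{\Sigma\setminus B})$ (for the few small $i$ for which $h_i$ is not yet defined on $\overline{\Sigma\setminus B}$, take any disk neighbourhood of the pole). Then $U_i$ is a disk neighbourhood of the pole of $q_i$ -- its complement being an embedded copy of $\Sigma$ with an open disk deleted -- and $S_i=\Sigma_i\setminus U_i=h_i(\overline{\Sigma\setminus B})$, so $h_i^{-1}$ identifies $S_i$ conformally with the fixed compact bordered surface $\overline{\Sigma\setminus B}$. (If one prefers $S_i$ to have polygonal boundary and to contain the metric spine $\mathcal{G}_i$, which is convenient in the next subsections, one may instead take $U_i$ to be a fixed-height truncation of the planar end of $q_i$; since both $q$ and $q_i$ have a pole of order exactly $n$, the truncated surface is still conformally close to a fixed compact piece of $\Sigma\setminus p$, and the arguments below go through verbatim.)

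For (1): by the area formula~(\ref{eq:area}) and the conformal identification $h_i$,
\[
Area(S_i)=\int\limits_{S_i}\lvert q_i\rvert=\int\limits_{\overline{\Sigma\setminus B}}\lvert h_i^{\ast}q_i\rvert\ \longrightarrow\ \int\limits_{\overline{\Sigma\setminus B}}\lvert q\rvert\qquad(i\to\infty),
\]
using uniform convergence $h_i^{\ast}q_i\to q$ on the compact set $\overline{\Sigma\setminus B}$. The right-hand side is finite because $p$ is the only pole of $q$; a convergent sequence is bounded, and the finitely many initial terms are finite, so $Area(S_i)$ is bounded above uniformly in $i$.

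For (2): let $\gamma$ be an (essential) simple closed curve on $S$ homotopic into $S_i$, and let $\gamma_i^{\ast}\subset S_i$ realise $\inf\{\ell_{q_i}(\gamma')\colon\gamma'\subset S_i,\ \gamma'\simeq\gamma\}$. Then $\delta_i:=h_i^{-1}(\gamma_i^{\ast})$ lies in the fixed compact surface $\overline{\Sigma\setminus B}\subset\Sigma\setminus p$ and is homotopic to $\gamma$ there, so
\[
\ell_{q_i}(\gamma_i^{\ast})=\ell_{h_i^{\ast}q_i}(\delta_i)\ \ge\ \ell_{h_i^{\ast}q_i}(\gamma),
\]
the free $h_i^{\ast}q_i$-geodesic length of $\gamma$ (taken over a compact $K\supset\overline{\Sigma\setminus B}$ on which $h_i$ is defined and which contains the $q$-geodesic of $\gamma$). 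As $h_i^{\ast}q_i\to q$ locally uniformly, and since a competitor of controlled length cannot penetrate the near-pole region where the $q_i$-metric becomes large, the $h_i^{\ast}q_i$-geodesic of $\gamma$ stays in a fixed compact set and $\ell_{h_i^{\ast}q_i}(\gamma)\to\ell_q(\gamma)$; and $\ell_q(\gamma)>0$ because the $q$-metric on $\Sigma\setminus p$ is a genuine complete singular flat metric, with cone angles $\ge3\pi$ and the pole at infinite distance, so no essential class can be shrunk to zero length. Hence $\ell_{q_i}(\gamma_i^{\ast})\ge\tfrac12\ell_q(\gamma)>0$ for all large $i$, and the finitely many initial terms are positive, which gives the required uniform lower bound.

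I expect the main obstacle to be the geometric assertion implicit above: that the truncated surfaces $S_i$ genuinely converge conformally to a fixed compact piece of $\Sigma\setminus p$, equivalently that the $U_i$ can be chosen uniformly ``large'' so that no essential loop of $q_i$ of vanishing length can hide in $S_i$ near $\partial U_i$, nor escape towards the pole. With the conformal choice $U_i=\Sigma_i\setminus h_i(\overline{\Sigma\setminus B})$ this is built in, at the price of a metrically less convenient $S_i$; with the fixed-height truncation it requires knowing that the order-$n$ pole structure, hence the geometry of the truncation near the pole, is preserved under $q_i\to q$ in $\widehat{\mathcal{Q}}_m$ -- which should follow from convergence of the relevant Laurent data, but must be argued. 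The remaining input, deferred to Appendix~A, is the precise meaning of convergence in $\widehat{\mathcal{Q}}_m$: the existence of the conformal embeddings $h_i$ with $h_i^{\ast}q_i\to q$ locally uniformly on $\Sigma\setminus p$.
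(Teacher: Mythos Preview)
Your proposal is correct and follows essentially the same route as the paper: choose the $U_i$ to be the pullback of a single fixed disk around the pole via the maps witnessing convergence, so that $S_i$ is identified with a fixed compact piece of $\Sigma\setminus p$; then both bounds follow from $q_i\to q$ on that compact. The paper's proof is just a terser version of this (``by convergence of $q_i$, the area converges'' and ``the singular flat $q_i$-metrics lie in a compact set, hence so do the geodesic lengths'').

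One technical point worth flagging: you assume that convergence in $\widehat{\mathcal{Q}}_m$ supplies \emph{conformal} embeddings $h_i\colon K\hookrightarrow\Sigma_i$. The paper's Appendix~A, and the paper's own proof of this lemma, instead use $(1+\epsilon)$-quasiconformal homeomorphisms $f_i\colon\Sigma_i\to\Sigma$ (coming from Teichm\"uller convergence of the underlying surfaces), and the paper sets $U_i=f_i^{-1}(U)$. Your identities $\int_{S_i}|q_i|=\int|h_i^{\ast}q_i|$ and $\ell_{q_i}(\gamma_i^{\ast})=\ell_{h_i^{\ast}q_i}(\delta_i)$ are exact only for conformal maps; with $(1+\epsilon_i)$-quasiconformal maps they hold up to a multiplicative $(1+\epsilon_i)$ factor. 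This is harmless for the uniform bounds you are after, but you should either state the quasiconformal version or justify that genuine conformal embeddings of compacts are available (which is not what Appendix~A actually provides). Your final paragraph correctly identifies this as the outstanding input.
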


\begin{proof}
Since the meromorphic quadratic differentials $q_i$ are converging in $\widehat{\mathcal{Q}}_m$, the underlying conformal structures converge. Choose a disk $U$ around the pole in $\Sigma$, and for a choice of $\epsilon>0$, fix a sequence of $(1+ \epsilon)$-quasiconformal maps $f_i:\Sigma_i\to \Sigma$ preserving the puncture. Set $U_i = f_i^{-1}(U)$. By the convergence of $q_i$, the area or $L^1$-norm of $q_i$ on $S_i$ converges to $Area(\Sigma\setminus U)$, and hence we have statement (1) above.\\
Also, by the convergence, the singular flat $q_i$-metrics lie in a compact set, and hence so do the lengths of the  geodesic representatives of a fixed simple closed curve $\gamma$ on $\Sigma_i$. (By properties of non-negative curvature, such a geodesic representative is unique except the case when they sweep out a flat annulus,  in which case the lengths are all the same.) This implies that the length of any curve in $S_i$ homotopic to $\gamma$ is uniformly bounded from below, which is statement (2). 
\end{proof}

\begin{lem}\label{lem:notwist}
For any simple closed curve $\gamma$, consider a maximal embedded annulus $A_i(\gamma)$ on $S_i$. Then for any edge $e$ of the spine $\mathcal{G}_i$, each component of $e\cap A_i(\gamma)$ has uniformly bounded length as $i\to \infty$.  Moroever, $A_i(\gamma)$ is a maximal annulus on $\Sigma_i$ and $e\cap A_i(\gamma)$ has uniformly bounded twist about $\gamma$.
\end{lem}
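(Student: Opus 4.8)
The plan is to derive both conclusions from the geometric control established in Lemma~\ref{lem:apriorib}, namely the uniform area upper bound on $S_i = \Sigma_i\setminus U_i$ and the uniform lower bound on $q_i$-lengths of curves in a fixed homotopy class. First I would bound the length of each component arc $e\cap A_i(\gamma)$: since the half-plane differential $q_i$ restricts to $dz^2$ on each half-plane, the metric spine $\mathcal{G}_i$ has all its edges \emph{horizontal} in the $q_i$-metric, so each such component is a horizontal segment. If such a segment had very large length, then (using that it lies in the maximal annulus $A_i(\gamma)$ with core $\gamma$) one could sweep out a large-area flat region — concretely, taking a vertical neighborhood of the horizontal segment inside $A_i(\gamma)$ produces a flat rectangle (or a flat subannulus, after using the core curve direction) whose area grows with the length, contradicting the uniform area bound of Lemma~\ref{lem:apriorib}(1). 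So the length of each component of $e\cap A_i(\gamma)$ is uniformly bounded; I'd make this precise by noting that a horizontal arc of length $L$ embedded in an annulus of the surface forces either an embedded flat rectangle of area comparable to $L$ times the width, or forces the arc to wind, and winding is controlled by the twist which I address next.

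Next I would argue that $A_i(\gamma)$ is a maximal annulus not just on $S_i$ but on all of $\Sigma_i$. The only obstruction is that the complement of the closure of $A_i(\gamma)$ in $\Sigma_i$ could pick up the puncture; but since $U_i$ was chosen as a disk neighborhood of the pole and the complement of $A_i(\gamma)$ in $S_i$ consists of disks and once-punctured disks (the once-punctured one being the component meeting $\partial S_i = \partial U_i$), gluing $U_i$ back in merely fills that once-punctured disk to an honest disk — so the complement in $\Sigma_i$ consists of disks (plus possibly the puncture inside one of them), which is exactly the definition of a maximal annulus for $\gamma$ on $\Sigma_i$. Hence twisting of $e$ about $\gamma$ is genuinely captured inside $A_i(\gamma)$ and is not isotopable away, mirroring the argument of Lemma~\ref{lem:dtwist}.

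Finally, to bound the twist uniformly: the twist of a component arc of $e\cap A_i(\gamma)$ about $\gamma$ is, up to an additive constant, the ratio of the arc's length to the $q_i$-length of the core curve $\gamma$ (this is the coarse relation between twisting in an annulus and length of a transversal, valid in any flat-with-cone-points metric). The numerator is uniformly bounded by the previous paragraph; the denominator — the $q_i$-length of a curve homotopic to $\gamma$ — is uniformly bounded \emph{below} by Lemma~\ref{lem:apriorib}(2). Therefore the twist is uniformly bounded above, as claimed.

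The main obstacle I expect is making the first step fully rigorous: converting ``a long horizontal arc inside the maximal annulus'' into ``large area'' requires controlling the \emph{width} available around the arc, and a priori the annulus could be very thin. The resolution is that one does not need a fixed-width tube around the \emph{whole} arc: either the arc stays in a region where some definite vertical width is available (giving area growing linearly in length, contradiction), or the arc is forced to return near itself repeatedly, which means it winds around $\gamma$ many times — but that winding is precisely the twist, which the length-over-circumference estimate then bounds by the (already bounded) area together with the lower bound on $\mathrm{length}(\gamma)$. So the area bound and the length lower bound must be used \emph{together}, trading off against each other, rather than one after the other; getting that interplay clean is the delicate point. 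A convenient way to organize it is to pass to the annular cover of $\Sigma_i$ associated to $\gamma$, lift the arc, and observe that its length there equals its length downstairs while its total horizontal displacement is (twist)$\times$(circumference); bounding the area of the relevant sub-annulus of the cover then closes the loop.
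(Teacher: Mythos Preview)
Your proposal is correct and follows essentially the same route as the paper: both arguments feed the area upper bound and the circumference lower bound from Lemma~\ref{lem:apriorib} into an area-versus-length estimate inside $A_i(\gamma)$, and both deduce the twist bound from the length bound divided by the circumference lower bound. Your maximality argument for $A_i(\gamma)$ on $\Sigma_i$ is also the same as the paper's.

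The one place where the paper is cleaner is exactly the ``delicate point'' you flag. You worry about securing a definite vertical width around the horizontal arc, and propose to handle it via the annular cover or a dichotomy between ``definite width available'' and ``forced winding''. The paper instead exploits the half-plane structure directly: from every point of the horizontal edge $e$ there is a vertical ray into an \emph{adjacent half-plane}, and these rays are automatically embedded and disjoint because the half-planes are. If the edge twists many times about $\gamma$, each such vertical ray must travel a distance at least the circumference of $A_i(\gamma)$ before it can escape the annulus (see Figure~20 in the paper). This immediately produces an embedded metric collar of width at least the circumference along the entire arc $e\cap A_i(\gamma)$, so the area is at least (length of arc)$\times$(circumference), and both bounds of Lemma~\ref{lem:apriorib} apply in one stroke. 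This sidesteps the need for the annular-cover detour and makes the interplay you describe entirely explicit.
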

\begin{proof}
The horizontal edge $e$ from the spine twists across the annular region $A_i(\gamma)$. Assume a large number of twists. For each point in $e\cap A_i(\gamma)$ sufficiently in the middle, there is a vertical segment into an adjacent half-plane, which because of the twisting, has length at least the circumference of $A_i(\gamma)$ before it can escape the annulus (see Figure 20).\\

\begin{figure}[h]
  \centering
  \includegraphics[scale=0.8]{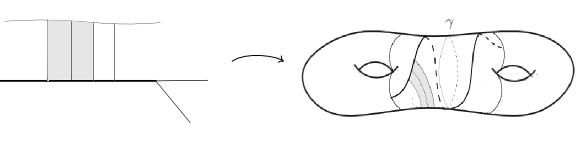}\\
  \caption{A collar about the metric spine eating into an adjacent half-plane (shown on the left) embeds in $A_i(\gamma)$ contributing to area.}
  \end{figure}

This circumference is bounded below by (2) of Lemma \ref{lem:apriorib}, and hence this sweeps out a definite metric collar in $A_i(\gamma)$ around the spine, which contributes area proportional to the length of $e\cap A_i(\gamma)$. On the other hand, by (1) of Lemma \ref{lem:apriorib} the areas of $A_i(\gamma)$ (which are less than $Area(S_i)$) remain remain uniformly bounded, and hence so do the lengths of $e\cap A_i(\gamma)$. Again by the uniform lower bound on the circumferences of $A_i(\gamma)$, this implies that the twisting of $e$ around $\gamma$ cannot tend to infinity (each twist will add to a length of at least half the circumference). Since the complement of a $S_i$ is a punctured disk $U_i$, the annulus $A_i(\gamma)$ that is maximal on $S_i$, is also maximal on $\Sigma_i$.
\end{proof}

The following is now immediate from Lemma \ref{lem:toplem2}:

\begin{cor}\label{cor:marking}
After passing to a further subsequence, we can assume that the metric graphs $\mathcal{G}_i$  are isomorphic as marked spines on the surface. In particular, a cycle in the graph corresponds to the same homotopy class of a closed curve throughout the sequence.
\end{cor}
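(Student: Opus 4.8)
The plan is to deduce the statement by feeding the a priori twist bound of Lemma~\ref{lem:notwist} into the finiteness result of Lemma~\ref{lem:toplem2} and then applying the pigeonhole principle. First I would fix once and for all a ``sufficiently large'' finite collection $\mathcal{C}$ of simple closed curves on $S$ — a complete marking, or the Humphries generators of $MCG(S)$ — as in the hypotheses of Lemmas~\ref{lem:toplem1} and \ref{lem:toplem2}.

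Next, for each $c\in\mathcal{C}$ I would invoke Lemma~\ref{lem:notwist}: choosing for every $i$ a maximal embedded annulus $A_i(c)$ on $S_i=\Sigma_i\setminus U_i$ with core curve $c$, that lemma produces a bound $N_c$, independent of $i$ for $i$ large, on the twist about $c$ of every component of $e\cap A_i(c)$ as $e$ runs over the edges of the metric spine $\mathcal{G}_i$; it also tells us each $A_i(c)$ is maximal on $\Sigma_i$ itself, so the hypotheses of Lemma~\ref{lem:toplem2} are literally met on each $\Sigma_i$. Setting $N=\max_{c\in\mathcal{C}}N_c$ — finite because $\mathcal{C}$ is finite — the marked embedded graph underlying $\mathcal{G}_i$, i.e.\ $\mathcal{G}_i$ viewed as a topological spine of $S\setminus P$ up to isotopy rel $P$, then lies in the finite set $\mathcal{M}=\mathcal{M}(N)$ of Lemma~\ref{lem:toplem2} for all sufficiently large $i$.

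The conclusion now follows since $\mathcal{M}$ is a finite set: passing to a subsequence along which the isotopy class of the embedded spine $\mathcal{G}_i$ in $S\setminus P$ is constant, and recalling that earlier in this section we already arranged the $\mathcal{G}_i$ to be abstractly isomorphic to a fixed finite graph $\mathcal{G}_{top}$ with edge-lengths $l_i(e)\to l(e)$ for $e\in\mathcal{E}$, along this subsequence the $\mathcal{G}_i$ become the \emph{same} marked graph on the surface, differing only in the assignment of edge-lengths. Consequently any cycle in $\mathcal{G}_{top}$ is carried by this common marked embedding to a single fixed free homotopy class of closed curve on $S\setminus P$, independently of $i$, which is the last assertion.

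There is no serious obstacle here — this is precisely why the corollary is flagged as ``immediate'' — but the one point that deserves care is that the twist bound of Lemma~\ref{lem:notwist} is uniform in $i$ yet a priori depends on the chosen curve; it is exactly the finiteness of the testing collection $\mathcal{C}$, together with the fact (Lemma~\ref{lem:toplem2}) that controlling twists about a complete marking pins an embedded spine down to finitely many isotopy classes, that lets one collapse these into a single constant $N$ and hence into the finite set $\mathcal{M}$.
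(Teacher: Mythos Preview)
Your argument is correct and is exactly the intended one: the paper states the corollary as ``immediate from Lemma~\ref{lem:toplem2}'', and you have simply spelled out the implicit step of feeding the uniform twist bound from Lemma~\ref{lem:notwist} (over a finite marking collection $\mathcal{C}$) into that finiteness result and then pigeonholing.
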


\begin{lem}[No collapsing cycles]\label{lem:forest}
$\mathcal{C}$ is a forest, that is, it contains no cycle, and $\mathcal{D}$ is empty.
\end{lem}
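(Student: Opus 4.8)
The plan is to turn a hypothetical diverging edge, or a hypothetical collapsing cycle, into a geometric quantity forbidden by the a priori bounds of Lemma \ref{lem:apriorib} (bounded $q_i$-area of $S_i$, no short essential curves) together with the fact that the limit $q$ lies in $\widehat{\mathcal{Q}}_m$, hence has a pole of order exactly $n_j$ (with nonzero leading coefficient) at each $p_j$. I would treat the two assertions $\mathcal{D}=\emptyset$ and ``$\mathcal{C}$ is a forest'' separately; after the combinatorial normalization of Corollary \ref{cor:marking}, both amount to controlling $q_i$-lengths of curves in fixed isotopy classes.

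\textit{The diverging locus.} Suppose $l_i(e)\to\infty$ for some $e\in\mathcal{E}$; say $e$ is a horizontal boundary segment of an adjacent half-plane $H$ of $\Sigma_i$. Since $\Sigma_i$ is glued only along the boundary lines of its half-planes, the open collar $\{\,x\in e,\ 0<y<w\,\}\subset H$ embeds in $\Sigma_i$ with $q_i$-area exactly $w\cdot l_i(e)$. Choosing the disks $U_i$ of Lemma \ref{lem:apriorib} so that $\mathcal{G}_i\subset S_i$, and using the no-short-curves bound of Lemma \ref{lem:apriorib}(2) (equivalently the bounded twisting of Lemma \ref{lem:notwist}), one can take $w$ to be a uniform $w_0>0$ with the collar still inside $S_i$; then $Area(S_i)\ge w_0\,l_i(e)\to\infty$, contradicting Lemma \ref{lem:apriorib}(1). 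Hence $\mathcal{D}=\emptyset$; in particular the total length of the cycle of $\mathcal{G}_i$ around each pole $p_j$ stays bounded above.

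\textit{The collapsing locus.} Suppose $\mathcal{C}$ contains a cycle $c$. By Corollary \ref{cor:marking} it is an embedded loop on $\Sigma\setminus P$ in a fixed isotopy class, with $\ell_{q_i}(c)\le\sum_{e\in c}l_i(e)\to0$. An embedded cycle in the spine $\mathcal{G}_i$ is non-trivial in $\pi_1(\Sigma\setminus P)$, so its class is either essential and non-peripheral, or peripheral about some pole $p_j$. In the first case Lemma \ref{lem:apriorib}(2) gives a uniform positive lower bound on $\ell_{q_i}$ of curves in that class — contradiction. If the class is peripheral about $p_j$ and $a_j>0$ (in particular $n_j$ even, so $\sqrt{q_i}$ is single-valued near $p_j$), then $a_j=\bigl|\int_c\sqrt{q_i}\bigr|\le\ell_{q_i}(c)\to0$ forces $a_j=0$ — a contradiction. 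The remaining case is a peripheral cycle about a pole with $a_j=0$ (all odd-order poles included). Here $c$ bounds the planar end $N_i$ of $q_i$ at $p_j$; uniformizing $N_i\cup\{p_j\}$ to $\mathbb{D}$ with $p_j\mapsto0$ makes $q_i$ equal to $\tilde c_i\,\zeta^{-n_j}(1+\cdots)\,d\zeta^2$ and $c$ homotopic to $\partial\mathbb{D}$. Since $g_i(\zeta):=\zeta^{n_j}q_i(\zeta)$ is holomorphic on $\mathbb{D}$ with $g_i(0)=\tilde c_i$, the sub-mean-value property of $\log|g_i|$ and Jensen's inequality give $\ell_{q_i}(\partial\mathbb{D})=\int_0^{2\pi}\sqrt{|g_i(e^{i\theta})|}\,d\theta\ge2\pi\sqrt{|\tilde c_i|}$, so it suffices to bound $|\tilde c_i|$ away from $0$. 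By Lemma \ref{lem:pullb} and Corollary \ref{cor:uhmap} applied to the planar end of residue $a_j$, $|\tilde c_i|$ is comparable to a negative power of the planar-end size, which is bounded above by the previous paragraph; hence $|\tilde c_i|$ is bounded below and we reach a contradiction. This proves $\mathcal{C}$ is a forest.

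\textit{The main obstacle.} The genuinely delicate point throughout is the interplay between the choice of the disks $U_i$ of Lemma \ref{lem:apriorib} and the planar ends: running both the area argument for the diverging pole-cycle edges and the length bound for the collapsing ones requires knowing that the planar ends of $q_i$ do not become conformally (hence metrically) enormous along the sequence, and it is precisely here that convergence in $\widehat{\mathcal{Q}}_m$ — with the pole order pinned at $n_j$ and the leading coefficient bounded away from $0$, not merely convergence of the underlying conformal structures — is indispensable. Making the dictionary ``collapsing pole-cycle $\leftrightarrow$ conformal scale of the planar end $\leftrightarrow$ leading coefficient'' precise, via Lemmas \ref{lem:uh}, \ref{lem:pullb} and Corollary \ref{cor:uhmap}, and choosing $U_i$ compatibly with it, is the technical heart of this lemma.
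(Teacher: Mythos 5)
The proposal is more elaborate than the paper's argument and takes a genuinely different route, but the key case fails. The paper handles ``$\mathcal{C}$ is a forest'' in one line: after Corollary~\ref{cor:marking}, a cycle in $\mathcal{G}_i$ is a fixed non-trivial curve class whose $q_i$-lengths are bounded below because $\{q_i\}$ lies in a compact subset $K\subset\widehat{\mathcal{Q}}_m$; and it handles $\mathcal{D}=\emptyset$ via the length bound of Lemma~\ref{lem:notwist}. You instead split the cycle case into non-peripheral, peripheral with $a_j>0$, and peripheral with $a_j=0$. The first two cases are fine — the non-peripheral case is exactly Lemma~\ref{lem:apriorib}(2), and the residue inequality $a_j\le\ell_{q_i}(c)$ is a clean observation. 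Your treatment of $\mathcal{D}=\emptyset$ via a definite-width collar is in the same spirit as the paper's Lemma~\ref{lem:notwist} and is correct modulo the (reasonable) point that the spine sits in $S_i$.

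The gap is in the case of a peripheral cycle about a pole with $a_j=0$. Your Jensen estimate $\ell_{q_i}(\partial\mathbb{D})\ge 2\pi\sqrt{|\tilde c_i|}$ is correct (since $q_i$ is zero-free on the planar end, $\log|g_i|$ is harmonic and the mean-value property applies). But it does not yield a contradiction, because for the model planar ends the inequality is essentially sharp. Concretely: writing $k=n_j$, the planar end truncated at height $H_i$ has $\ell_{q_i}(c)\sim H_i$, while by Corollary~\ref{cor:uhmap} the uniformizer $\phi_{H_i}:U_{H_i}\to\mathbb{D}$ satisfies $|\phi_{H_i}'(0)|\asymp H_i^{2/(k-2)}$, so by Lemma~\ref{lem:pullb} the pulled-back leading coefficient is $|\tilde c_i|\asymp |\phi_{H_i}'(0)|^{k-2}\asymp H_i^2$. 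Thus $2\pi\sqrt{|\tilde c_i|}\asymp H_i$, i.e., the two sides of your inequality are comparable, and both tend to $0$ as $H_i\to 0$. There is no contradiction. Your supporting claim that ``$|\tilde c_i|$ is comparable to a negative power of the planar-end size, which is bounded above by the previous paragraph'' does not hold: $|\tilde c_i|\asymp H_i^2$ is an \emph{increasing} function of $H_i$, and the diverging-locus argument only gives an \emph{upper} bound on $H_i$, which bounds $|\tilde c_i|$ above, not below. So the $a_j=0$ peripheral case is not ruled out by this argument.

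What is actually needed is an input that pins the planar end to the \emph{fixed} conformal chart $U$ on $\Sigma$: since $q_i\to q$ in $\widehat{\mathcal{Q}}_m$, the leading coefficient of $q_i$ with respect to $U$ (not with respect to the varying domain $N_i$) converges to that of $q$, which is nonzero, and the zeroes of $q_i$ near $p_j$ converge to the zeroes of $q$, so the spine cannot retreat into the cusp and the geodesic length of a fixed peripheral class converges to a positive limit. This is exactly the content the paper compresses into ``lower bound by compactness of $K$.'' Your Jensen inequality, computed in the intrinsic chart of $N_i$, cannot see this normalization because the chart rescales along with the shrinking cycle.
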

\begin{proof} The meromorphic quadratic differentials $q_i$ lie in a compact set  $K$ of $\widehat{\mathcal{Q}}_m$ since they form a convergent sequence. By Corollary \ref{cor:marking}, after passing to a subsequence a cycle in the metric graph $\mathcal{G}_i$ corresponds to a (fixed) non-trivial curve in $\Sigma$ whose lengths in the singular flat $q_i$-metric  must have a uniform lower bound by the compactness of $K$. By the uniform (upper) length bound of Lemma \ref{lem:notwist} there cannot be an edge whose lengths tend to infinity.\end{proof}

Recall from the discussion preceding Definition \ref{defn:collocus} that the finite edge-lengths of the metric spines converge after passing to a subsequence. Let  $\mathcal{G}$ be metric graph obtained by assigning this length $l(e)$ to every edge $e\in \mathcal{G}_{top}$, where it is understood that any component tree of the collapsing locus $\mathcal{C}$ is identified with a single vertex. The previous  lemma ensures that  $\mathcal{G}$  has the same topological type, that is, remains homotopy equivalent to $\mathcal{G}_{top}$.

 \begin{figure}[h]
  \centering
  \includegraphics[scale=0.5]{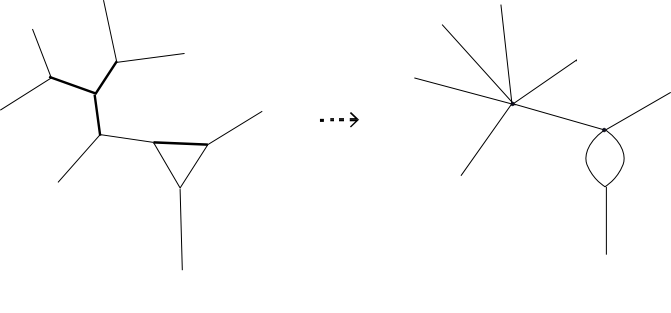}\\
  \caption{The edges in $\mathcal{C}$ (shown in bold on the left) collapse along the sequence of graphs $\mathcal{G}_i$. }
  \end{figure}

\begin{defn}[Limiting half-plane surface]\label{defn:limit}
The half-plane surface $\Sigma_{\mathcal{G}}$ is defined to be the one obtained by gluing in $n$ half-planes along the metric graph  $\mathcal{G}$  in the combinatorial order identical to the gluing of the half-planes along the metric spine of each $(\Sigma, q_i)$.  (Here we assume we have passed to a subsequence where the metric spines are identical as marked graphs.)
\end{defn}

 \subsubsection*{Proving $\Sigma_\mathcal{G}$ is the conformal limit}

\begin{lem}\label{lem:approx2}
For all sufficiently large $i$ there exist $(1+\epsilon_i)$-quasiconformal maps
\begin{equation*}
\overline{h_i}:\Sigma_i^\prime \to \Sigma_\mathcal{G}
\end{equation*}
where $\epsilon_i\to 0$ as $i\to \infty$.
\end{lem}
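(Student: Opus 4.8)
The plan is to build the maps $\overline{h_i}:\Sigma_i^\prime\to\Sigma_\mathcal{G}$ piece by piece, matching the decompositions of both surfaces into half-planes glued along their metric spines, and to argue that the quasiconformal distortion introduced in each piece tends to $1$. First I would recall that $\Sigma_i^\prime$ carries the half-plane differential $q_i^\prime$ with metric spine $\mathcal{G}_i$, and that by Corollary \ref{cor:marking} we may pass to a subsequence along which all the $\mathcal{G}_i$ are isomorphic as marked graphs to the fixed graph $\mathcal{G}_{top}$, with edge-lengths $l_i(e)\to l(e)$, where the collapsing locus $\mathcal{C}$ is a forest (Lemma \ref{lem:forest}) and the diverging locus is empty. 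The target $\Sigma_\mathcal{G}$ is glued from $n$ half-planes along $\mathcal{G}$ in the same combinatorial pattern (Definition \ref{defn:limit}). So both surfaces are unions of $n$ euclidean half-planes, and the map is already determined up to the behaviour near the spine: on the ``bulk'' of each half-plane (a region bounded away from the finite part of the spine and from the planar ends), $q_i^\prime$ and the differential on $\Sigma_\mathcal{G}$ are both literally $dz^2$, so I would take $\overline{h_i}$ to be the identity there. It remains to interpolate in a bounded neighbourhood of the spine, and to deal with the planar-end regions.

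The key step is the interpolation near the metric spine. Fix a half-plane $H$; the part of $\partial H$ that gets glued to the spine is a finite union of horizontal intervals whose lengths are the edge-lengths $l_i(e)$, separated by the images of finite-length edges, arranged in the combinatorial order common to all $i$. On $\Sigma_\mathcal{G}$ the corresponding boundary intervals have lengths $l(e)$, with intervals in the collapsing locus $\mathcal{C}$ degenerated to points. I would cover the relevant boundary-collar of $H$ by finitely many rectangles (and degenerate ``triangular'' pieces at vertices of the spine), one group of rectangles around each edge, chosen with a uniformly bounded number of pieces (this bound depends only on the topology, via the uniformly bounded number of edges of $\mathcal{G}_{top}$). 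On each rectangle over a non-collapsing edge the map is an affine map taking a rectangle of width $l_i(e)$ to one of width $l(e)$, with dilatation $\max(l_i(e)/l(e),\,l(e)/l_i(e))\to 1$. The only subtlety is at edges in the collapsing locus: there one must collapse a thin rectangle of width $l_i(e)\to 0$ onto a slit; since the heights of all these collar rectangles can be taken comparable (bounded above and below independently of $i$, because the half-plane differentials lie in a compact set of $\widehat{\mathcal{Q}}_m$ by hypothesis, and by Lemma \ref{lem:notwist} the edges do not wander or twist), the collapse is a quasiconformal map whose distortion is governed by the aspect ratio and can be absorbed — more carefully, a degenerating edge should be handled by a map of a neighbourhood of the slit in $\Sigma_\mathcal{G}$ back to a neighbourhood of the short edge in $\Sigma_i^\prime$, which is a conformal-up-to-$(1+o(1))$ change since a slit of length $l_i(e)$ converging to $0$ inside a fixed-size domain perturbs the modulus of every annulus around it by a factor $\to 1$ (compare the modulus estimates used in Lemma \ref{lem:arcs} and in Corollary \ref{cor:corqclem}). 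Gluing these finitely many pieces with matching boundary values yields a homeomorphism $\overline{h_i}$ with global dilatation $1+\epsilon_i$, $\epsilon_i\to 0$, since the maximal dilatation of a map built from finitely many pieces is the maximum of the dilatations on the pieces.

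Finally I would check that the planar-end regions cause no trouble: each $\Sigma_i^\prime$ has exactly one planar end, namely the truncation $\mathcal{P}_{H_i}$ glued in, while $\Sigma_\mathcal{G}$ has the planar end of the limit differential $q$ (with the same order $n$ and residue $a$ by Proposition \ref{prop:step4}); both are conformally a once-punctured disk, and on the complement of a large compact piece they are isometric to the standard model of \S3.3, so the map extends across the puncture (as a conformal map there, or at worst $(1+o(1))$-quasiconformal after matching the boundary polygon of the truncation to that of the limiting planar end). The main obstacle, and the place where real care is needed, is precisely the collapsing locus: one must verify that collapsing a thin rectangle to a slit — or equivalently opening up a slit to a thin rectangle — can be realized with dilatation tending to $1$, uniformly over the (boundedly many) collapsing edges, using that the transverse heights of the collars are bounded away from $0$ and $\infty$ along the sequence (which is where Lemma \ref{lem:notwist} and the compactness of the family $\{q_i\}$ in $\widehat{\mathcal{Q}}_m$ enter). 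Everything else is routine interpolation of affine maps with matching boundary data.
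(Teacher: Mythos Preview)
Your approach is essentially that of the paper: first adjust the non-collapsing edge-lengths by affine horizontal stretches extended trivially to the half-planes (the paper's Step~I), then deal with the collapsing locus by a quasiconformal extension argument invoking Corollary~\ref{cor:corqclem} (the paper's Step~II). Two points deserve tightening.

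First, your treatment of the collapsing locus edge-by-edge is not quite right, and the sentence ``the collapse is a quasiconformal map whose distortion is governed by the aspect ratio'' is simply false: a map sending a rectangle onto a segment is not a homeomorphism, let alone quasiconformal. Your corrected version via moduli is the right idea, but it must be organised around \emph{component trees} of $\mathcal{C}$, not single edges: several collapsing edges can share a vertex, so their neighbourhoods overlap. The paper uses Lemma~\ref{lem:forest} precisely here --- because $\mathcal{C}$ is a forest, a fixed-radius neighbourhood of each component tree is a topological disk, and one applies Corollary~\ref{cor:corqclem} once per component after first building (via Lemma~\ref{lem:extend}) a map that is an isometry outside these small disks. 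If $\mathcal{C}$ contained a cycle its neighbourhood would be an annulus and this step would fail; that is the genuine role of Lemma~\ref{lem:forest}, not the ``transverse heights of collars'' you cite, and Lemma~\ref{lem:notwist} is not needed at this stage.

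Second, your separate discussion of the planar ends is unnecessary. Both $\Sigma_i^\prime$ and $\Sigma_\mathcal{G}$ are assembled from the same number of half-planes glued along their full spines, including the infinite-length edges; on those edges the map is an isometry, so the planar ends are carried along automatically and need no additional matching argument.
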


We start by observing that a map that collapses a short segment of the boundary of a half-plane $\mathbb{H}$ can be extended to a map of the half-plane that is almost-isometric away from a suitable neighborhood:

\begin{lem}\label{lem:extend}
For any $\epsilon>0$ there exists a $\delta >0$ such that a map $h:\partial \mathbb{H}\to \partial \mathbb{H}$ that collapses an interval $I$ of length $\epsilon$ and isometric on its complement, extends to a  homeomorphism $\overline{h}:\mathbb{H}\to  \mathbb{H}$ that is an isometry away from an $\delta$-neighborhood of $I$. Moroever, one can choose $\delta$ such that $\delta\to 0$ as $\epsilon\to 0$.
\end{lem}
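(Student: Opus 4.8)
The plan is to reduce the statement to an explicit cut-and-paste along $I$ and its two adjacent rays. Normalize coordinates on $\mathbb{H}$ so that $\partial\mathbb{H}=\mathbb{R}$ and, after a translation, $I=[-\epsilon/2,\epsilon/2]$. Since $h$ is isometric on each of the two rays of $\partial\mathbb{H}\setminus I$, post-composing with an ambient isometry we may assume $h(x)=x$ on $(-\infty,-\epsilon/2]$; continuity together with the collapse of $I$ then forces $h(I)=\{-\epsilon/2\}$ and $h(x)=x-\epsilon$ on $[\epsilon/2,\infty)$. The structural point to settle first is that the exceptional region cannot have connected complement in $\mathbb{H}$: an orientation-preserving homeomorphism of the open half-plane that is a local isometry on a connected simply connected open set restricts there to a single rigid motion, and no rigid motion equals the identity on one of the two rays and the translation $z\mapsto z-\epsilon$ on the other. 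So the exceptional region must \textit{separate} the two rays adjacent to $I$; we take it to be the half-strip $N=\{\,z\in\mathbb{H}:\ |\mathrm{Re}\,z|<\epsilon/2+\delta\,\}$, a neighbourhood of $I$ of transverse width $\epsilon+2\delta$, and the free parameter $\delta$ (which we may take to be, say, $\delta=\epsilon$) is what tends to $0$.

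With this in place, define $\overline h$ by $\overline h(z)=z$ on the component $\{\mathrm{Re}\,z\le-\epsilon/2-\delta\}$ of $\mathbb{H}\setminus N$ and $\overline h(z)=z-\epsilon$ on the component $\{\mathrm{Re}\,z\ge\epsilon/2+\delta\}$; each is an isometry. Their images are disjoint and their complement in $\mathbb{H}$ is the half-strip $N'=\{\,-\epsilon/2-\delta<\mathrm{Re}\,z<-\epsilon/2+\delta\,\}$ of width $2\delta$. It remains to map $N$ homeomorphically onto $N'$ with the prescribed boundary values: the two vertical edges of $N$ go to those of $N'$ by the two isometries above, the end at infinity to the end at infinity, and the bottom edge $[-\epsilon/2-\delta,\epsilon/2+\delta]$ to the bottom edge $[-\epsilon/2-\delta,-\epsilon/2+\delta]$ by $h$ (which is the identity on $[-\epsilon/2-\delta,-\epsilon/2]$, sends $I$ to $-\epsilon/2$, and is $z\mapsto z-\epsilon$ on $[\epsilon/2,\epsilon/2+\delta]$). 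Such a homeomorphism of the open half-strips exists and can be written down explicitly, e.g.\ in the form $\overline h(x+iy)=g_y(x)+iy$ for a continuous one-parameter family of monotone homeomorphisms $g_y$ of $[-\epsilon/2-\delta,\epsilon/2+\delta]$ onto $[-\epsilon/2-\delta,-\epsilon/2+\delta]$, fixing the left endpoint and sending the right to $-\epsilon/2+\delta$, which are injective for $y>0$ and degenerate to $h$ as $y\to 0$. Gluing the three pieces yields $\overline h:\mathbb{H}\to\mathbb{H}$, a homeomorphism of the open half-plane, restricting to $h$ on $\partial\mathbb{H}$, and equal to an isometry off the neighbourhood $N$ of $I$, whose width $\epsilon+2\delta\to 0$ as $\epsilon\to 0$.

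The only real content is the observation in the first paragraph — that the $\epsilon$-discrepancy between the two boundary isometries forces the distortion into a region separating the two sides of $I$, so that the small parameter is the width of that region rather than its diameter (if one insisted on a genuine metric-ball neighbourhood one would have to relax ``isometry'' to a quasiconformal near-isometry). Everything after that — the two explicit exterior isometries, the existence of the half-strip-to-half-strip extension realizing the given boundary values, and the verification that $\overline h$ is a homeomorphism of the open half-plane — is routine, and there is no analytic obstacle.
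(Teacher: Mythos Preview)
Your argument is correct and, in one respect, more careful than the paper's own sketch. Both proofs take $\delta=\epsilon$ and build a height-preserving extension, but the paper asserts that the non-isometric locus can be taken to be the \emph{bounded} rectangle $R=[-\epsilon,\epsilon]\times[0,\epsilon]$, writing $\overline h(x,y)=(\phi(x,y)\,x,\,y)$ for a bump function $\phi$ equal to $0$ on $I$ and $1$ off $R$. Your first paragraph explains precisely why a bounded exceptional region cannot work: on its (connected, simply connected) complement the extension would have to be a single rigid motion of $\mathbb H$, yet the boundary data forces the identity on the left ray and the translation $z\mapsto z-\epsilon$ on the right. (Read literally, the paper's formula is the identity on both boundary rays and so is not even surjective onto $\partial\mathbb H$.) Your replacement of $R$ by the infinite half-strip $N=\{\lvert\mathrm{Re}\,z\rvert<\epsilon/2+\delta\}$, with the identity to its left, the translation to its right, and a height-preserving interpolation $g_y$ across $N$, is the natural repair and is presumably what the paper's ``horizontal stretch'' was meant to encode. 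Your parenthetical caveat --- that insisting on a genuine metric $\delta$-ball about $I$ would force one to weaken ``isometry'' to a near-isometry on the complement --- is exactly the point and worth flagging.
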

\begin{proof} One can in fact choose $\delta = \epsilon$, and the extension to be height-preserving, as follows.  Let $\mathbb{H} = \{(x,y)|$ $y>0\}$ and $I = [-\epsilon/2,\epsilon/2]\times\{0\}$. Choose a ``bump" function $\phi(x,y)$ supported on the rectangle $R= [-\epsilon,\epsilon]\times[0,\epsilon]$ that is positive in its interior and interpolates between $0$ on $I$ and $1$ on the complement of $R$. This function $\phi(x,y)$ is the ``dilatation factor" of a horizontal stretch map
\begin{equation*}
h(x,y) = \left(\phi(x,y)x, y\right)
\end{equation*}
that is the required extension.
\end{proof}

\begin{proof}[Proof of Lemma \ref{lem:approx2}]

For any $\epsilon>0$, we shall construct a $(1+\epsilon)$-quasiconformal map from $\Sigma_i^\prime$ to $ \Sigma_\mathcal{G}$ for all sufficiently large $i$. The construction is in two steps: in the first step we map to an intermediate half-plane surface $\Sigma_{\mathcal{G}^{\prime}_i}$.\\

\textit{Step I.}
Let $\mathcal{E}$ be the set of edges in $\mathcal{G}_{top}$. The cardinality $\vert \mathcal{E}\vert$ is finite, a number depending only on the genus of $\Sigma$. Recall also (see Definition \ref{defn:collocus}) that  $\mathcal{C} \subset \mathcal{E}$  is the sub-graph consisting of edges whose lengths along the sequence $\mathcal{G}_i$ tend to zero.\\

The lengths of edges of $\mathcal{E}\setminus \mathcal{C}$ in  $\mathcal{G}_i$ however converge to \textit{positive} lengths of the corresponding edges in $\mathcal{G}$. Hence for all $e\in \mathcal{E}\setminus \mathcal{C}$ we have:
\begin{equation}\label{eqn:ratio}
r_i=  \frac{l_i(e)}{l(e)} \to 1
\end{equation}
as $i\to \infty$.\\

Consider the metric graph $\mathcal{G}^{\prime}_i$ obtained by assigning the following lengths to the edges of $\mathcal{G}_{top}$: $l(e)$ to all edges in $\mathcal{E}\setminus \mathcal{C}$ and $l_i(e)$ to all edges in  $\mathcal{C}$. We can construct a $K^1_i$-biLipschitz map 
\begin{equation}\label{eq:h1i}
h^1_i:\mathcal{G}_i\to \mathcal{G}^{\prime}_i
\end{equation}
that preserves vertices, is a linear stretch map on all the finite-length edges in $\mathcal{E}\setminus \mathcal{C}$, and is an isometry on every other edge. By (\ref{eqn:ratio}), the stretch-factors, and therefore the biLipschitz constants  $K^1_i\to 1$ as $i\to \infty$.\\

Note that any $K$-biLipschitz map
\begin{equation*}
b:\mathbb{R}\to \mathbb{R}
\end{equation*}
can be extended to a $K$-biLipschitz map $\bar{b}:\mathbb{H}\to\mathbb{H}$ of the upper half-plane (here $\mathbb{R} = \partial \mathbb{H}$ by mapping
\begin{equation*}
(x,y) \mapsto (b(x), y).
\end{equation*}

Applying this to the map (\ref{eq:h1i}) above, we get for all sufficiently large $i$, a $(1+\epsilon)$-biLipschitz extension 
\begin{equation}\label{eq:ext1}
\overline{h^1_i}:\Sigma_i\to \Sigma_{\mathcal{G}^{\prime}_i}
\end{equation}
where $ \Sigma_{\mathcal{G}^{\prime}_i}$ is the half-plane surface obtained by gluing half-planes along $\mathcal{G}^{\prime}_i$.\\

\textit{Step II.}
Note that the graph $\mathcal{G}$ is obtained by collapsing all the edges of $\mathcal{C}$ in $\mathcal{G}^{\prime}_i$. 

Let $E$ be the set of finite-length edges of $\mathcal{E}\setminus \mathcal{C} $ and consider the minimum length
\begin{equation*}
c = \min\limits_{e\in E} l(e)
\end{equation*}
if $E\neq \emptyset$ is non-empty, and set $c=2$ if $E = \emptyset$.\\

\begin{figure}[h]
  \centering
  \includegraphics[scale=0.75]{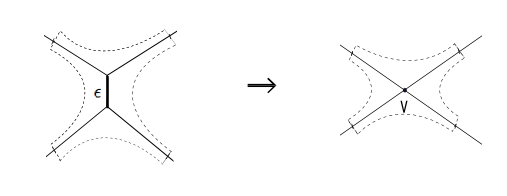}\\
  \caption{In \textit{Step II} the map between half-plane surfaces that is an isometry away from a neighborhood of $\mathcal{C}$ is finally adjusted to an almost-conformal map. }
  \end{figure}

If $\mathcal{C}_i$ denotes the metric subgraph corresponding to $\mathcal{C}$ in $\mathcal{G}_i$, recall we have
\begin{equation}\label{eqn:seq}
diam(\mathcal{C}_i) \to 0
\end{equation}
as $i\to\infty$.\\

Since $\mathcal{C}_i$ is a forest by Lemma \ref{lem:forest}, for sufficiently large $i$ we have:\\
(1) For each half plane $\mathbb{H}$ of the half-plane surface  $ \Sigma_{\mathcal{G}^{\prime}_i}$ the intersection $\partial \mathbb{H} \cap \mathcal{C}_i$ is a union of segments each of length less than $\epsilon$ and separated by a distance at least $c$.\\
(2) The $c/2$-neighborhood $N_{c/2}$ of $\mathcal{C}_i$ is topologically a union of disks, one for each component tree.\\

Moreover, for sufficiently large $i$, $\epsilon$ is small enough such that the corresponding $ \delta <c/2$ where $\delta$ is as in Lemma \ref{lem:extend}. By an application of that Lemma on each half-plane, one can build a homeomorphism
\begin{equation}\label{eq:h1i0}
\overline{h^\prime_i}:\Sigma_{\mathcal{G}^{\prime}_i}\to \Sigma_\mathcal{G}
\end{equation}
that is an isometry (and hence conformal) away from a $\delta$-neighborhood $N_\delta$ of $\mathcal{C}_i$.\\

By observation (2) above each component of $N_{c/2}\setminus N_\delta$ is topologically an annulus, and has modulus that tends to infinity as $\delta \to 0$. Hence for sufficiently large $i$, one can apply the quasiconformal extension of Corollary \ref{cor:corqclem} to adjust the map $\overline{h^\prime_i}$ in the component disks of $N_\delta$ to obtain a $(1 + C^\prime\epsilon)$-quasiconformal map
homeomorphism
\begin{equation}\label{eq:h1i1}
\overline{h^2_i}:\Sigma_{\mathcal{G}^{\prime}_i}\to \Sigma_\mathcal{G}
\end{equation}

Recall the map (\ref{eq:ext1}) from $Step$ I. The composition 
\begin{equation}\label{eq:h1i2}
\overline{h_i} =\overline{h^2_i}\circ \overline{h^1_i}:\Sigma_i \to \Sigma_\mathcal{G}
\end{equation}
is then $(1+ C^{\prime\prime}\epsilon)$-quasiconformal, for some universal constant $C^{\prime\prime}>0$, as required.
\end{proof}

\section{Step 6: Prescribing the leading order term }

 From Lemmas \ref{lem:approx1} and \ref{lem:approx2} we now have:
 
 \begin{prop}\label{prop:qcseq} The quasiconformal maps
$g_i\circ f_i^{-1}: \Sigma \setminus p \to \Sigma_\mathcal{G}$
have quasiconformal dilatation that tends to $1$ as $i\to \infty$, and hence limits to a conformal homeomorphism 
\begin{center}
$g:\Sigma \setminus p \to \Sigma_\mathcal{G}$
\end{center}
in the sense of uniform convergence on compact sets, after passing to a subsequence. Here, $\Sigma_\mathcal{G} = \Sigma_{n,a}$, that is, it is a half-plane surface with a pole of order $n$ and residue $a$. Moroever, $g$ is homotopic to the identity map.
\end{prop}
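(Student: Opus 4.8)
The plan is to assemble the two families of quasiconformal maps produced in \emph{Steps 3} and \emph{5} and then extract a limit by a normal-families argument, using a fixed homotopy class to prevent degeneration. By Lemma~\ref{lem:approx1} there is, for all large $i$, a $(1+\epsilon_i)$-quasiconformal homeomorphism $f_i:\Sigma_i^\prime\to\Sigma\setminus p$, and by Lemma~\ref{lem:approx2} a $(1+\epsilon_i)$-quasiconformal homeomorphism $\overline{h_i}:\Sigma_i^\prime\to\Sigma_\mathcal{G}$, with $\epsilon_i\to 0$ in both cases. Since $f_i^{-1}$ is again $(1+\epsilon_i)$-quasiconformal, the composition $\overline{h_i}\circ f_i^{-1}:\Sigma\setminus p\to\Sigma_\mathcal{G}$ (the map written $g_i\circ f_i^{-1}$ in the statement) is a homeomorphism of dilatation at most $(1+\epsilon_i)^2$, which tends to $1$; this is the first assertion.

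Before passing to a limit I would fix a homotopy class. The map $f_i$ agrees with the conformal embedding $g_i:\Sigma_i\to\Sigma\setminus p$ from the proof of Lemma~\ref{lem:approx1} on the fixed compact piece $\Sigma_1$ of the exhaustion, so all the $f_i$ lie in one homotopy class of homeomorphisms; and by Corollary~\ref{cor:marking} the metric spines of the $(\Sigma,q_i)$ and the limiting spine $\mathcal{G}$ coincide as \emph{marked} graphs, so the $\overline{h_i}$ respect this marking and likewise lie in one homotopy class. Hence every $\overline{h_i}\circ f_i^{-1}$ is homotopic to a single homeomorphism $\Phi:\Sigma\setminus p\to\Sigma_\mathcal{G}$, namely the identification of the two surfaces as marked topological surfaces, so that homotopy to $\Phi$ is what is meant, in the statement, by homotopy to the identity.

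Next I would lift $\overline{h_i}\circ f_i^{-1}$ to the universal covers ($\mathbb{D}$, or $\mathbb{C}$ when $\Sigma$ has genus zero), equivariantly for the fixed isomorphism of fundamental groups induced by $\Phi$, normalize the lifts so as to fix three points, and invoke the compactness of normalized quasiconformal self-maps. As the dilatations tend to $1$, a subsequence of the lifts converges locally uniformly to a $1$-quasiconformal, hence conformal, map, which descends to a conformal map $g:\Sigma\setminus p\to\Sigma_\mathcal{G}$, the convergence being uniform on compact subsets of $\Sigma\setminus p$. Running the same argument on the inverse maps (same dilatation bound, inverse homotopy class) produces a conformal limit $g^\prime:\Sigma_\mathcal{G}\to\Sigma\setminus p$, and passing to the limit in $(\overline{h_i}\circ f_i^{-1})^{-1}\circ(\overline{h_i}\circ f_i^{-1})=\mathrm{id}$ and the reverse composition shows $g^\prime\circ g=\mathrm{id}$ and $g\circ g^\prime=\mathrm{id}$; thus $g$ is a conformal homeomorphism, and being a locally uniform limit of maps homotopic to $\Phi$ it is itself homotopic to $\Phi$, i.e.\ to the identity. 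Finally, $\Sigma_\mathcal{G}$ carries the half-plane differential obtained by gluing $n$ half-planes along the limiting metric spine $\mathcal{G}$ (Definition~\ref{defn:limit}), which by Theorem~\ref{thm:cpct} together with Proposition~\ref{prop:step5} is the limit $q$ of the $q_i^\prime$, a half-plane differential with a single pole of order $n$ and residue $a$; hence $\Sigma_\mathcal{G}=\Sigma_{n,a}$.

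I expect the main obstacle to be this limit-extraction step: both $\Sigma\setminus p$ and $\Sigma_\mathcal{G}$ are non-compact (the planar end has infinite area), so one has to make sure the sequence does not degenerate near that end - this is exactly what fixing the homotopy class and normalizing on the universal cover are designed to prevent - and one must check that the limit is a genuine homeomorphism rather than a map collapsing a subsurface, which is why the inverse maps are controlled in parallel. Everything else - the dilatation estimate and the identification $\Sigma_\mathcal{G}=\Sigma_{n,a}$ - is bookkeeping on top of the preceding steps.
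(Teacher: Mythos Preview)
Your argument is correct, and the overall structure---compose the two almost-conformal families from Lemmas~\ref{lem:approx1} and~\ref{lem:approx2}, then extract a conformal limit by compactness, with the homotopy class pinned down by inspecting the constructions---matches the paper. The one genuine difference is in how the limit is extracted. You work on the open surfaces $\Sigma\setminus p$ and $\Sigma_\mathcal{G}$, lift equivariantly to the universal cover, normalize by three points, and then run the argument a second time on the inverses to rule out collapsing. The paper instead observes that each $g_i\circ f_i^{-1}$ extends across the puncture to a quasiconformal homeomorphism between the \emph{closed} surfaces (sending $p$ to $\infty$); once domain and target are compact, the standard compactness theorem for quasiconformal maps of bounded dilatation yields the conformal limit immediately, and no separate treatment of the inverses or of the planar end is needed. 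Your approach costs a bit more work but is perfectly sound; the paper's one-line extension to the closed surface is exactly the device that dissolves the ``main obstacle'' you identify.
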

\begin{proof}
The quasiconformal maps extend to a map between the closed surfaces, mapping $p$ to $\infty$. The uniform convergence is a standard application of the compactness of a family of quasiconformal maps with fixed domain and target, and bounded dilatation. Since the quasiconformal dilatations of $f_i$ and $g_i$ tend to $1$ as $i\to \infty$,  that the limiting homeomorphism is $1$-quasiconformal, and hence conformal.\\

By construction (see Proposition \ref{prop:step4}), the limiting half-plane differential has a pole of order $n$ and residue $a$.
Inspecting the construction of the quasiconformal homeomorphisms $f_i$ and $g_i$, we observe that both are homotopic to the identity ($f_i$ is a quasiconformal map on a disk together with the identity map on its complement, and $g_i$ restricts to  a homotopy equivalence of the metric spines). Hence so is each homeomorphism $g_i\circ f_i^{-1}$, and the limit $g$.
\end{proof}

To complete the proof of Theorem \ref{thm:main} (in the case of a single marked point $p$) it only remains to show that the half-plane differential has a leading order term $c$ with respect to the fixed choice of coordinate neighborhood $U$ around $p$. By Lemma \ref{lem:pullb} it will be enough to show that the above conformal map $g$ has derivative of suitable magnitude with respect to this conformal neighborhood. This is where a suitable choice of the constant $H_0$ in (\ref{eq:hi}) will be made.\\

As before let $\phi:U\to \mathbb{D}$ be a conformal homeomorphism such that $\phi(p)=0$. Let $g(U)$ be a subset of a planar end is identified with a complement of a compact set in $\mathbb{C}$. As in \S3.3, by an inversion map, one has a conformal homeomorphism $\psi:g(U) \to V$  that takes $\infty$ to $0$, where $V$ is a simply connected domain in $\mathbb{C}$ containing $0$. In the rest of this section we shall show:

\begin{prop}\label{prop:deriv} There is a choice of $H_0$ in (\ref{eq:hi}) for which the conformal map $G = \psi\circ g\circ\phi^{-1}:\mathbb{D}\to \mathbb{C}$ that takes $0$ to $0$, has derivative $\left| G^\prime(0)\right| = c^{-\frac{1}{n-2}}$. For this $H_0$, we have that the leading term of the half-plane differential for $\Sigma_\mathcal{G}$ is $c$ with respect to the coordinate chart $U$.
\end{prop}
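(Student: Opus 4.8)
The plan is to trace through how the leading order term depends on the single free parameter $H_0$, and then invoke a continuity/monotonicity argument together with control over the two extreme regimes. The starting observation is Lemma~\ref{lem:pullb}: the leading order term of the half-plane differential for $\Sigma_\mathcal{G}$ with respect to the chart $\phi:U\to\mathbb{D}$ equals $\left|G^\prime(0)\right|^{2-n}$ times the leading coefficient of the fixed standard differential~(\ref{eq:fixqd}) (whose top coefficient is $1$). Hence requiring the leading term to be $c$ is precisely the requirement $\left|G^\prime(0)\right| = c^{-1/(n-2)}$, which is what the proposition asserts. So the entire task reduces to showing that the magnitude $\left|G^\prime(0)\right|$ can be made to equal any prescribed positive number by a suitable choice of $H_0$.

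First I would set up the dependence of $G$ on $H_0$. Each surface $\Sigma_i^\prime$ in the construction depends on $H_0$ only through the sequence $H_i = (H_0\cdot 2^i)^{n/2}$ of truncation heights; the rectangular surfaces $\Sigma_i$, the doubling arcs, and the conformal embeddings $g_i$ are unaffected. Thus the limiting conformal map $g:\Sigma\setminus p\to\Sigma_\mathcal{G}$, and hence $G = \psi\circ g\circ\phi^{-1}$, depends continuously and monotonically on $H_0$: increasing $H_0$ scales up all the planar ends glued in at every finite stage, which by the Schwarz-lemma argument underlying Lemma~\ref{lem:mon} forces $\left|G^\prime(0)\right|$ to vary monotonically (increasing the planar-end scale shrinks the image domain $V=\psi(g(U))$ near $0$, hence decreases $\left|G^\prime(0)\right|$). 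Continuity follows from the fact that the convergent subsequence of quasiconformal maps $g_i\circ f_i^{-1}$ depends continuously on $H_0$ through the boundary data, and uniform limits on compact sets preserve derivative values at an interior point. To pin down the two regimes I would use Corollary~\ref{cor:uhmap} and Lemma~\ref{lem:uhi}: the scale of the planar end $\mathcal{P}_{H_0}$ as a subdomain of $\Sigma_\mathcal{G}$ — measured by $dist(0,\partial\psi(g(U)))$, and hence by $\left|G^\prime(0)\right|$ up to the universal factor $4$ from~(\ref{eq:shk2}) — is comparable to $H_0^{-2/n}$. So $\left|G^\prime(0)\right|\to 0$ as $H_0\to\infty$ and $\left|G^\prime(0)\right|\to\infty$ as $H_0\to 0$.

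Having a continuous, monotone function of $H_0\in(0,\infty)$ that ranges over all of $(0,\infty)$, the intermediate value theorem gives a unique $H_0$ with $\left|G^\prime(0)\right| = c^{-1/(n-2)}$. Applying Lemma~\ref{lem:pullb} with $f = G^{-1}$ (after inverting back from the $w$-coordinate to the planar-end picture, which replaces the pole of order $n+2$ on $\mathbb{C}$ by the pole of order $n$ at $p$ — this bookkeeping between the $z$- and $w$-charts of~(\ref{eq:phi}) and~(\ref{eq:stan}) is routine but must be done carefully), the pulled-back differential on $\mathbb{D}$ has top coefficient $\left|G^\prime(0)\right|^{2-n}\cdot 1 = c$, as required.

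The main obstacle I anticipate is justifying the continuous and monotone dependence of the \emph{limit} map $G$ on $H_0$ with enough precision: a priori, different values of $H_0$ could force passage to different subsequences in Steps~4 and~5, so one must argue that the derivative $\left|G^\prime(0)\right|$ is nonetheless a well-defined continuous function of $H_0$ (e.g.\ by a diagonal argument, or by showing any subsequential limit gives the same value of $\left|G^\prime(0)\right|$ because it is controlled on both sides by the $H_0^{-2/n}$ estimates of Lemma~\ref{lem:uhi} and Corollary~\ref{cor:uhmap}, which hold uniformly in $i$). Once that is in hand, the monotonicity is a clean Schwarz-lemma consequence of the nested inclusions of planar ends, exactly as in Lemma~\ref{lem:mon}, and the rest is the intermediate value theorem plus the transformation law.
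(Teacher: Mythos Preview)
Your overall strategy --- reduce to an intermediate value argument on $H_0$, and use Lemma~\ref{lem:pullb} to convert the leading-term condition into $|G'(0)| = c^{-1/(n-2)}$ --- matches the paper's. But there is a genuine gap, and it is upstream of the subsequence issue you flag.

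You assert that ``the rectangular surfaces $\Sigma_i$, the doubling arcs, and the conformal embeddings $g_i$ are unaffected'' by $H_0$. This is false. The doubling arcs on $\partial\Sigma_i$ are chosen in \S7 (via Lemmas~\ref{lem:arcs} and~\ref{lem:cyllen}) precisely so that the Jenkins--Strebel cylinder lengths equal $2H_i$, and the prescribed circumferences are also $2H_i$; both depend on $H_0$. So the entire singular-flat structure on $\Sigma_i$, the gluing producing $\Sigma_i'$, and the limiting surface $\Sigma_{\mathcal{G}}$ all move with $H_0$. This breaks your Schwarz-lemma monotonicity argument: for two different values of $H_0$ the planar ends do not sit as nested domains inside a common ambient surface, so Lemma~\ref{lem:mon} does not apply.

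The paper avoids ever trying to control $|G'(0)|$ directly as a function of $H_0$. Instead it proves an analytical lemma (Lemma~\ref{lem:qclem2}): if a sequence of $(1+\epsilon_i)$-quasiconformal maps sends $B_{r_i}$ onto domains whose uniformizers $\phi_i$ satisfy $r_i|\phi_i'(0)|\to\alpha$, then the limiting conformal map has derivative $1/\alpha$ at $0$. In the application the relevant domains are the $U_{H_i}\subset\mathbb{C}$, whose uniformizers depend only on $H_i$ and on nothing else in the construction. Hence the auxiliary quantity $L=\lim r_i|\phi_i'(0)|$ is decoupled from the surface data; Corollary~\ref{cor:dlim} (via Corollary~\ref{cor:uhmap}) pins $L$ into the interval $[H_0/4D_2,\,H_0/D_1]$ and shows it varies continuously with $H_0$, whence IVT gives $H_0$ with $L=c^{1/(n-2)}$, and Lemma~\ref{lem:qclem2} yields $|G'(0)|=1/L$. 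The key move you are missing is this passage to $L$, which isolates the $H_0$-dependence in something purely planar.
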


The proof of this needs the following analytical lemma:

\begin{lem}\label{lem:qclem2}
Let $f_i:\mathbb{D}\to \mathbb{C}$ be a sequence of quasiconformal embeddings such that:\\
(1) $f_i(0)=0$ for all $i$,\\
(2) $f_i$ is $(1+\epsilon_i)$-quasiconformal, where $\epsilon_i\to 0$ as $i\to \infty$, and\\
(3) for some sequence $r_i\to 0$ we have that $f(B_{r_i}) = V_i$, where $V_i$ is an open simply connected domain containing $0\in \mathbb{C}$ having a uniformizing map $\phi_i:(V_i,0) \to (\mathbb{D},0)$, such  that:
\begin{equation}\label{eq:dl}
\left|r_i \phi_i^\prime(0)\right| \to \alpha
\end{equation}
as $i\to \infty$.\\
Then after passing to a subsequence, $f_i$ converges uniformly to $f$, a univalent conformal map such that $\left|f^\prime(0)\right| = 1/\alpha$.
\end{lem}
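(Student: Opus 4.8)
The plan is to extract a limit of the $f_i$ by a normal families argument and then separately show the limit is conformal and pin down the magnitude of its derivative at $0$ using the hypothesis \eqref{eq:dl}. First I would establish that the family $\{f_i\}$ is normal. Each $f_i$ is a $(1+\epsilon_i)$-quasiconformal embedding of $\mathbb{D}$ fixing $0$, so the $\epsilon_i$ are uniformly bounded and the maps are uniformly quasiconformal; normality (after passing to a subsequence, and after possibly rescaling — see below) then follows from the standard compactness theory for quasiconformal maps, provided we rule out degeneration. The danger is that $f_i$ collapses or blows up near $0$: to control this I would use hypothesis (3). The Koebe-type distortion estimate (in its quasiconformal version, e.g. Lemma \ref{lem:jyva}, applied to $\phi_i^{-1}\circ(\text{rescaling})\circ f_i$ on $B_{r_i}$) shows that $\mathrm{dist}(0,\partial V_i)$ is comparable to $r_i/|\phi_i'(0)|^{-1} = r_i|\phi_i'(0)|$... more precisely, by the Koebe one-quarter theorem for the conformal map $\phi_i^{-1}:\mathbb{D}\to V_i$ we get $\tfrac14 |(\phi_i^{-1})'(0)| \le \mathrm{dist}(0,\partial V_i) \le |(\phi_i^{-1})'(0)| = |\phi_i'(0)|^{-1}$, so \eqref{eq:dl} says $r_i/\mathrm{dist}(0,\partial V_i)$ stays bounded above and below away from $0$. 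Combined with the quasiconformal Hölder bounds of Lemma \ref{lem:jyva}, this forces $\mathrm{dist}(0,\partial f_i(B_{r_i})) = \mathrm{dist}(0,\partial V_i)$ to be comparable to $r_i$, and hence $f_i$ does not collapse the disk of radius $r_i$ to a point nor expand it uncontrollably; this gives the equicontinuity/non-degeneracy needed for normality on compact subsets of $\mathbb{D}$.

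Second, having passed to a subsequence with $f_i \to f$ uniformly on compact sets, the limit $f$ is $1$-quasiconformal since the dilatations tend to $1$ (by (2)), hence conformal; and $f$ is non-constant (hence univalent, being a locally uniform limit of injective maps) because of the non-degeneracy just established. It remains to compute $|f'(0)|$. Here the key point is that $f_i$ maps $B_{r_i}$ to $V_i$, so $\phi_i \circ f_i$ maps $B_{r_i}$ into $\mathbb{D}$ fixing $0$; the composition $z \mapsto \phi_i(f_i(r_i z))$ is then a $(1+\epsilon_i)$-quasiconformal self-map of $\mathbb{D}$ fixing $0$ (after checking it is onto, which it is since $f_i(B_{r_i})=V_i$). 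Such maps, being uniformly quasiconformal self-maps of the disk fixing $0$, have derivatives at $0$ of bounded magnitude, and one can compare $|(\phi_i\circ f_i)'(0)| \cdot r_i$ to a constant. Writing $(\phi_i\circ f_i)'(0) = \phi_i'(0) f_i'(0)$ (valid a.e.\ / in the appropriate weak sense, or rather applying the chain rule to the genuinely differentiable composition) we get
\begin{equation*}
|\phi_i'(0)|\,|f_i'(0)|\, r_i = |(\phi_i\circ f_i)'(0)|\, r_i,
\end{equation*}
and since $f_i\to f$ conformally, $|f_i'(0)| \to |f'(0)|$; since $r_i|\phi_i'(0)|\to\alpha$ by \eqref{eq:dl}, the right-hand side converges, and a distortion estimate for the almost-conformal self-map $z\mapsto \phi_i(f_i(r_iz))/(\text{something})$ forces $|(\phi_i\circ f_i)'(0)|\,r_i \to 1$. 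Rearranging gives $\alpha\,|f'(0)| = 1$, i.e.\ $|f'(0)| = 1/\alpha$, as claimed.

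The main obstacle I anticipate is making the last step — showing $|(\phi_i\circ f_i)'(0)|\,r_i \to 1$ — fully rigorous, since quasiconformal maps need not be differentiable at a given point in the classical sense, so "$f_i'(0)$" must be interpreted carefully. The clean way around this is to avoid pointwise derivatives of the $f_i$ entirely: work with the rescaled maps $F_i(z) := f_i(r_i z)$ on $\mathbb{D}$, note $F_i$ is $(1+\epsilon_i)$-quasiconformal with $F_i(\mathbb{D}) = V_i$ and $F_i(0)=0$, so $\phi_i \circ F_i : \mathbb{D}\to\mathbb{D}$ is a uniformly quasiconformal self-map fixing $0$. By compactness $\phi_i\circ F_i$ subconverges to a conformal self-map of $\mathbb{D}$ fixing $0$, i.e.\ a rotation, which has derivative of modulus $1$ at $0$. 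Meanwhile $F_i = f_i(r_i\,\cdot\,)$ subconverges (using the already-established convergence $f_i\to f$ and $r_i\to 0$) to the \emph{linear} map $z\mapsto f'(0)\,z\cdot 0$... — no: since $r_i\to 0$, $F_i(z) = f_i(r_iz)\to f(0) = 0$, which is degenerate, so instead one must rescale the \emph{target}: set $\widetilde F_i(z) = F_i(z)/r_i = f_i(r_iz)/r_i$; then $\widetilde F_i \to f'(0)\cdot z$ (the derivative of $f$ at $0$, now genuinely meaningful since $f$ is conformal), while $\phi_i\circ F_i(z) = \phi_i(r_i\widetilde F_i(z))$ and one shows $\phi_i(r_i w) \approx r_i\phi_i'(0)\,w$ near $w=0$ with error controlled by the Koebe distortion theorem on $V_i$. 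Passing to the limit in $\phi_i\circ F_i \to (\text{rotation})$ then yields $\alpha \cdot f'(0) = (\text{unit modulus})$, giving $|f'(0)| = 1/\alpha$. Organizing these rescalings so the limits match up is the delicate bookkeeping at the heart of the proof; everything else is an application of the quasiconformal compactness and distortion results already available.
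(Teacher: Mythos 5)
Your overall strategy --- extract a locally uniform limit $f$ by quasiconformal compactness, form the rescaled composition $\phi_i\circ f_i\circ\psi_i:\mathbb{D}\to\mathbb{D}$ (with $\psi_i(z)=r_iz$) and observe it converges to a rotation, then compare derivative data at $0$ to read off $|f'(0)|$ --- matches the paper's. Where you diverge is at the one genuinely delicate step, namely how to cope with the fact that the $f_i$ are merely quasiconformal and need not be differentiable at $0$. You propose a target rescaling: set $\widetilde F_i(z)=f_i(r_iz)/r_i$, assert $\widetilde F_i\to f'(0)\,z$, and feed that into the rotation limit. The paper instead never attempts a pointwise limit of derivatives at $0$: it works with averaged total derivatives $D^{\delta}_{\mathrm{avg}}(\cdot)=\frac{1}{\mathrm{Area}(B_\delta)}\int_{B_\delta}|D(\cdot)|$, applies the almost-everywhere chain rule inside the integral, uses the Koebe expansion of $\phi_i'$ to control its variation on a small disk, and then passes to the limit first in $i$ and then in $\delta$.

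The assertion $\widetilde F_i\to f'(0)\,z$ is where your argument has a real gap, and you flag it yourself as ``the delicate bookkeeping at the heart of the proof'' without filling it. Locally uniform convergence $f_i\to f$ on $\mathbb{D}$ gives no control on the behavior of $f_i$ at scale $r_i\to 0$: the difference $f_i-f$ is small in the sup norm on compacts, but after dividing by $r_i$ this smallness is lost, so $f_i(r_iz)/r_i$ and $f(r_iz)/r_i$ need not have the same limit. Concretely, modify the identity only near the origin by a mild radial stretch: take $f_i(z)=z|z|^{a_i}$ on $B_{r_i}$ (and interpolate off to the identity for $|z|\geq 1/2$), with $a_i\to 0$ chosen so that $r_i^{a_i}$ stays bounded away from $1$, say $r_i^{a_i}\equiv 1/2$. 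Then $\|\mu_{f_i}\|_\infty\to 0$, $f_i(0)=0$, and $f_i\to\mathrm{id}$ uniformly, so $f=\mathrm{id}$ and $f'(0)=1$; but $\widetilde F_i(z)=z\,|r_iz|^{a_i}\to z/2\neq f'(0)\,z$. So $\widetilde F_i$ does subconverge to a conformal map fixing $0$, but the limit reflects what $f_i$ does at scale $r_i$, which is not a priori tied to $f'(0)$. Supplying that tie is exactly the content of the paper's averaged-derivative argument (which exploits $L^1_{\mathrm{loc}}$ convergence of $|Df_i|$ for a uniformly quasiconformal, uniformly convergent sequence), and your rescaling route would need an analogous input to close.
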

\begin{proof}
It is a standard fact that a sequence of $K$-quasiconformal self maps of $\hat{\mathbb{C}}$ normalized by the additional requirement that it fixes two points $0$ and $\infty$ forms a sequentially compact family with respect to uniform convergence. This is satisfied by the family $\{f_i\}$  for each $K>1$ and hence there is a limiting \textit{conformal} map $f$ as required, which is either univalent or constant. It only remains to show that $\left|f^\prime(0)\right| = 1/\alpha$ - this will also rule out the latter possibility.\\

Consider the conformal dilatation $\psi_i:\mathbb{D} \to B_{r_i}$ where $\psi_i(z) = r_iz$. Note that $\psi_i^\prime(0) = r_i$. Then the composition $F_i=\phi_i\circ f_i\circ \psi_i:\mathbb{D}\to \mathbb{D}$ is  $(1 + \epsilon_i)$-quasiconformal, where $\epsilon_i\to 0$ as $i\to \infty$ by property (2) above. The compactness result  mentioned above (see also Theorem 1 of \cite{Ahl}) implies that there is a subsequence that converges uniformly to a conformal map $F$ . Moreover, since each map along the sequence preserves the point $0\in \mathbb{D}$, so does the limit and by the Schwarz lemma, $F$ is a rotation and in particular
\begin{equation}\label{eq:psip}
\left| F^\prime(0) \right|=1.
\end{equation}
If each $f_i$ were differentiable at $0$, and the sequence of derivatives converged to the derivative of the limit, we would have by using the chain rule:
\begin{equation*}
 \left| F^\prime(0) \right| = \lim\limits_{i\to \infty} \left| (\phi_i\circ f_i\circ \psi_i)^\prime(0) \right| =  \lim\limits_{i\to \infty} \left| \phi_i^\prime(0) \right| \cdot  \left| {f_i}^\prime(0) \right| \cdot r_i= \alpha  \left|  f^\prime(0) \right| 
\end{equation*}
where the last equality is from (\ref{eq:dl}). By (\ref{eq:psip}) the argument would be complete, namely $\left|f^\prime(0)\right| = 1/\alpha$ as desired. However, $f_i$ are merely quasiconformal, and may not be differentiable at $0$. However they have derivatives which are defined almost-everywhere and are locally integrable, and that converge in norm to the derivative of the limit. So we run the above argument with the \textit{averages} ($L^1$-norms) of the total derivatives in a sequence of shrinking disks around $0$:\\

Let $B_\delta$ be a disk around $0$ of radius $\delta>0$ (sufficiently small).  For a conformal or quasiconformal map $F$ defined on $\mathbb{D}$ we have a nonnegative real number
\begin{equation*}
D_{avg}^\delta F = \frac{1}{Area(B_\delta)}\int\limits_{B_\delta} \lvert DF_i(z)\rvert dzd\bar{z}.
\end{equation*}

In what follows we shall pass to a converging subsequence wherever needed.\\
The sequence of quasiconformal maps  $F_i:\mathbb{D}\to \mathbb{D}$ converges uniformly to a rotation, and hence 
\begin{equation} \label{eq:davg1}
D_{avg}^\delta F_i\to 1
\end{equation}
as $i\to \infty$.\\
Also, the sequence of quasiconformal maps $f_i:\mathbb{D}\to\mathbb{C}$ converges uniformly to a conformal map $f$, and this implies that 
\begin{equation}\label{eq:davg2}
D_{avg}^\delta f_i\ \to \left|f^\prime(0)\right|
\end{equation}
as $i\to \infty$ and $\delta\to 0$.\\
Further, for the univalent conformal map  $\phi_i:V_i\to \mathbb{D}$, one has that
\begin{equation}\label{eq:phiex}
\phi_i^\prime (z) = \phi_i^\prime (0) + Az + O(z^2)
\end{equation}
where $\left|A\right|$ is bounded above by a universal constant (a standard fact in univalent mappings).\\

Since quasiconformal maps are differentiable almost everywhere,  we have by the chain rule that
\begin{equation}\label{eq:break}
\left| DF_i(z) \right| = \left| D\phi_i(z)Df_i(z) D\psi_i(z) \right| =  r_i \left| Df_i(z) \right| \left|\phi_i^\prime(z)\right|
\end{equation}
for $z\in B_\delta^\prime$, a full-measure subset of $B_\delta$.\\

By (\ref{eq:dl}) for any $\epsilon>0$ there is $i$ sufficiently large such that  $\alpha -\epsilon < r_i\left|\phi_i(0)\right| < \alpha +\epsilon$ and  $Ar_i <\epsilon$ where $A$ is the constant in (\ref{eq:phiex}). Using (\ref{eq:break}) we then have:
\begin{equation*}
\int\limits_{B_\delta^\prime} \lvert DF_i(z)\rvert dzd\bar{z} = \int\limits_{B_\delta^\prime}   \left| Df_i(z) \right| r_i\left|\phi_i^\prime(z)\right| dzd\bar{z} \leq \int\limits_{B_\delta^\prime}  \left| Df_i(z) \right| \left(\alpha + 2\epsilon\right)dzd\bar{z} 
\end{equation*}
where we have used (\ref{eq:phiex}) and the above observations for the last inequality.\\

We have a similar bound from below.  By taking $i\to \infty$ we get that 
\begin{equation*}
\left|\alpha D_{avg}^\delta f_i -   D_{avg}^\delta F_i \right| \to 0
\end{equation*}
and hence  by (\ref{eq:davg1})  we have:
\begin{equation*}
D_{avg}^\delta f_i \to 1/\alpha 
\end{equation*}
which by (\ref{eq:davg2}) is equal to $\left|f^\prime(0)\right|$ as $\delta \to 0$. This completes the proof. \end{proof}

In our case, we shall apply Lemma \ref{lem:qclem2} to the sequence of quasiconformal maps $g_i\circ f_i^{-1}:\Sigma \to \Sigma_{\mathcal{G}}$ (see Propn. \ref{prop:qcseq}) after restricting to the open set $U$ that we conformally identify with $\mathbb{D}$ (see the discussion preceding Propn. \ref{prop:deriv}).  Recall that by construction the above quasiconformal map takes the open set $V_i\subset U$ to a planar end $\mathcal{P}_{H_i}$ that can be identified with an open neighborhood $U_{H_i}$ of $0\in \mathbb{C}$ (See also Lemma \ref{lem:uh}).\\

To use  Lemma \ref{lem:qclem2}, we need to verify the condition (3).  Recall from \S3.3 that $U_{H_i}$ is the image of the planar end truncated at height $H_i$. Consider a sequence of uniformizing conformal maps $\phi_i:  U_{H_i}\to \mathbb{D}$ for $i\geq 1$, and recall that $r_i= 2^{-i}$ by construction (\S6). In this setup, we have:

\begin{cor}\label{cor:dlim}  After passing to a  subsequence, $r_i\left| \phi_i^\prime(0)\right|$ has a limit $L$ as $i\to \infty$. Moreover, $L\to \infty$ as $H_0\to \infty$, and $L\to 0$ as $H_0\to 0$, where $H_0>0$ was the constant chosen arbitrarily in (\ref{eq:hi}), and $L$ varies continuously with $H_0$.
\end{cor}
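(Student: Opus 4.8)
\emph{Proof outline.} The plan is to reduce the statement to the large-$H$ behaviour of the normalized derivative $\beta(H):=\lvert\phi_H'(0)\rvert/H^{2/n}$, which by Corollary~\ref{cor:uhmap} satisfies $\tfrac{1}{4D_2}\le\beta(H)\le\tfrac{1}{D_1}$ for every $H>0$. The key bookkeeping identity is a direct consequence of the choice~(\ref{eq:hi}): since $r_i=2^{-i}$, $\phi_i=\phi_{H_i}$ and $H_i^{2/n}=\bigl((H_0\cdot 2^i)^{n/2}\bigr)^{2/n}=H_0\cdot 2^i$, we obtain
\[
r_i\,\lvert\phi_i'(0)\rvert \;=\; 2^{-i}\,\beta(H_i)\,H_i^{2/n} \;=\; H_0\,\beta(H_i),
\]
with $H_i\to\infty$. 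Since $\beta$ is bounded we may pass to a subsequence along which $\beta(H_i)$ converges, which produces the limit $L$ asserted in the statement; and since every subsequential limit of $\beta(H_i)$ lies in $[\tfrac1{4D_2},\tfrac1{D_1}]$, we get $H_0/(4D_2)\le L\le H_0/D_1$, hence $L\to\infty$ as $H_0\to\infty$ and $L\to0$ as $H_0\to0$.

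The substantive point is the continuity of $L$ in $H_0$, and for this I would show that $\beta(H)$ has an honest limit $\beta_\infty=\beta_\infty(n)>0$ as $H\to\infty$; then $L=H_0\,\beta_\infty$ is linear in $H_0$, hence continuous and strictly increasing, and the passage to a subsequence above becomes superfluous. To compute $\lim_{H\to\infty}\beta(H)$ I would rescale the model domain by $\widetilde w=H^{2/n}w$: the standard differential~(\ref{eq:stan}) then takes, up to a nonzero constant multiple, the form $\bigl(\widetilde w^{-(n+2)}+i(a/H)\,\widetilde w^{-(n/2+2)}\bigr)d\widetilde w^2$, which is again a standard model but now with residue parameter $a/H$; correspondingly the rescaled domain $\widetilde U_H:=H^{2/n}U_H$ is the height-$1$ truncation of the planar end of that residue, and $\beta(H)=\lvert\widetilde\phi_H'(0)\rvert$ for the uniformizing map $\widetilde\phi_H:\widetilde U_H\to\mathbb{D}$. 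As $H\to\infty$ the residue parameter $a/H\to0$, the flat polygonal data of these truncated ends vary continuously and converge (the same continuous dependence of moduli on slit lengths used in the proof of Lemma~\ref{lem:arcs}), so $\widetilde U_H$ converges, as a domain pointed at $0$, to the height-$1$ truncation $\widetilde U_\infty$ of the residue-$0$ planar end; the Carath\'eodory kernel theorem then gives $\beta(H)\to\beta_\infty:=\lvert\psi_\infty'(0)\rvert$ for $\psi_\infty:\widetilde U_\infty\to\mathbb{D}$, with $\beta_\infty\ge\tfrac1{4D_2}>0$ by passing to the limit in Corollary~\ref{cor:uhmap}. (When $n$ is odd the residue is forced to be $0$, the metric $\lvert w\rvert^{-(n+2)}\lvert dw\rvert^2$ is exactly scale-covariant, $U_H=H^{-2/n}U_1$, and $\beta\equiv\lvert\phi_1'(0)\rvert$ is literally constant, so this step is trivial.)

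I expect the Carath\'eodory step to be the only delicate one: one must check that the height-$1$ truncated planar ends really do converge as pointed planar domains when their residue tends to $0$, i.e.\ that contracting the single anomalous notch to uniform relative width $1$ perturbs the uniformizing map, and its derivative at $0$, only by $o(1)$. The rescaling $\widetilde w=H^{2/n}w$ is precisely the device that makes this tractable, since it trades the fixed residue $a$ in~(\ref{eq:stan}) for the vanishing parameter $a/H$, after which the standard stability of Riemann maps under Carath\'eodory convergence applies. Granting this, $L=H_0\,\beta_\infty$, and the corollary — existence of the limit, strict monotonicity and continuity in $H_0$, and the limits $L\to\infty$, $L\to0$ as $H_0\to\infty$, $H_0\to0$ — follows at once.
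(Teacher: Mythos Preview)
Your argument for the existence of the subsequential limit and the asymptotics $L\to 0,\infty$ as $H_0\to 0,\infty$ is identical to the paper's: both compute $r_i\lvert\phi_i'(0)\rvert = H_0\,\beta(H_i)$ and invoke the uniform bounds of Corollary~\ref{cor:uhmap} to trap the sequence in $[H_0/4D_2,\,H_0/D_1]$.

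Where you diverge is the continuity claim. The paper simply asserts that as $H_0$ varies, the domains $U_{H_i}$ and hence the maps $\phi_i$ and their derivatives vary continuously, and that ``the same subsequence converges to a value that varies continuously.'' This is brief to the point of being hand-wavy: if $\beta(H)$ genuinely oscillated as $H\to\infty$, it is not clear why a single subsequence should work uniformly over a range of $H_0$, nor why the subsequential limit should depend continuously on $H_0$. You instead prove the sharper statement that $\beta(H)$ has an honest limit $\beta_\infty>0$, by rescaling $\widetilde w = H^{2/n}w$ so that $\widetilde U_H$ becomes the height-$1$ truncation of a planar end with residue $a/H\to 0$, and then applying Carath\'eodory kernel convergence. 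This yields $L = H_0\,\beta_\infty$ outright: no subsequence is needed, and continuity (indeed linearity and strict monotonicity) in $H_0$ is immediate. Your rescaling computation is correct, and the Carath\'eodory step you flag as delicate is indeed the only nontrivial point, but it is standard once one observes that the polygonal boundary data of the truncated ends depend continuously on the residue parameter. Your route is more work but gives a cleaner and more complete justification of the continuity statement than the paper provides.
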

\begin{proof}
By construction (see the first part of \S6) we have $r_i= 2^{-i}$ and $H_i^{2/n} = H_0\cdot 2^i$ (see also (\ref{eq:hi})). Corollary \ref{cor:uhmap} now shows that the sequence  $\{r_i\left| \phi_i^\prime(0)\right|\}_{i\geq1}$ lies in the interval $[H_0/4D_2, H_0/D_1]$, and hence after passing to a subsequence converges to $L$ in that interval.  As one varies $H_0$, all domains (and hence the conformal maps $\phi_i$ and its derivatives) vary continuously (see also Lemma \ref{lem:mon}) and the same subsequence converges to a value that varies continuously  (and lies in an appropriately shifted interval).
\end{proof}

\begin{proof}[Proof of Proposition \ref{prop:deriv}]
By the previous corollary, there exists a choice of $H_0$ in (\ref{eq:hi}) for which 
\begin{equation*}
r_i\left| \phi_i^\prime(0)\right| \to c^\frac{1}{n-2}
\end{equation*}
Applying Lemma \ref{lem:qclem2} to the sequence of quasiconformal maps
\begin{equation*}
\psi\circ g_i\circ f_i^{-1} \circ\phi^{-1}:\mathbb{D} \to \mathbb{C}
\end{equation*}
it follows that $G= \psi\circ g\circ \phi^{-1}$ has derivative $c^{-\frac{1}{n-2}}$ at $0$.\\

Now, the half-plane differential on the disk $\mathbb{D}$ uniformizing $U$ is the pullback of the standard meromorphic differential on $\mathbb{C}$ (see (\ref{eq:stan})) by this map $G$. Since the leading order term at the pole for this standard differential is $1$, by Lemma \ref{lem:pullb}, the leading order term is of the half-plane differential is $\left|G^\prime(0)\right|^{2-n}=c$ , as required.
\end{proof}

\section{Summary of the proof}

Collecting the results of the previous sections, we have:
\begin{proof}[Proof of Theorem \ref{thm:main}]

Propositions \ref{prop:qcseq} and \ref{prop:deriv} complete the proof of Theorem \ref{thm:main} in the case of a single marked point. This easily generalizes to the case of multiple poles, as we briefly summarize:\\

For a set $P = \{p_1,\ldots p_n\}$ on $\Sigma$ consider fixed coordinate charts $U_1,\ldots U_n$ around each. As in \S6 , we consider a compact exhaustion of the surface by excising  subdisks of radii $r_i$ tending to zero, from each (more specifically, we choose $r_i = 2^{-i}$). For each compact subsurface $\Sigma_i$ we choose a number of arcs on each boundary component depending on the desired orders of poles and by the quadrupling construction we construct a compact Riemann surface with a collection of distinct homotopy classes of curves (the assumption that $\Sigma \neq \widehat{\mathbb{C}}$ if $n=1$ ensures that the homotopy classes are distinct).  We prescribe a Jenkins-Strebel differential with closed trajectories in these homotopy classes and given cylinder circumferences on this surface. Moreover, by Lemma \ref{lem:arcs} one can choose the arcs so that the extremal lengths of these curves are precisely what is needed for the cylinder lengths to be 
\begin{equation*}
H_i^j=\left( H_0^j\cdot 2^i\right)^{n_j/2}
\end{equation*}
for the $j$-th marked point ($1\leq j\leq n$), where $n_j$ is the desired order of the pole at that marked point. (See \ref{eq:hi}) in \S6.) By quotienting back, one gets a ``rectangular"  metric on $\Sigma_i$ with polygonal boundaries of prescribed dimensions,  that can be completed to a half-plane surface $\Sigma_i^\prime$ by gluing in $n$ planar ends.  As in Lemma \ref{lem:uhi}, the above prescribed dimensions of the polygonal boundary ensures that the planar end glued in is conformally a disk of radius $O(r_i)$ around $0\in \mathbb{C}$.\\

The fact that the conformal structures on $\Sigma \setminus P$ and $\Sigma_i^\prime$ differ only a union of disks of small radii  ($r_i$) allows us to build an almost conformal homeorphism between them (\S8) that tends to a conformal homeomorphism as $i\to \infty$. The geometric control on the dimensions of polygonal boundaries of the rectangular surfaces $\Sigma_i$ also shows that the corresponding half-plane differentials converge to a meromorphic quadratic differential on $\Sigma$ with poles at $P$ (as in \S 9), and as in \S10 this limiting differential is in fact half-plane. Finally, as in \S11 one can show that an appropriate choice of  ``scaling" factors (the constants $H_0^j$ above) while constructing the sequence of rectangular surfaces, ensures that the leading order terms at each pole of the resulting half-plane differential are the desired real numbers.
\end{proof}

\section{Applications and questions}

\subsection{An asymptoticity result}
We had previously shown (see \cite{Gup1} for  a precise statement):

\begin{jthm}[\cite{Gup1}] A grafting ray in a generic direction in Teichm\"{u}ller space is strongly asymptotic to some Teichm\"{u}ller ray.
\end{jthm}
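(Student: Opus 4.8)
The plan to establish this within the framework of half-plane surfaces is to show that a grafting ray and a suitably chosen Teichm\"uller ray converge, as $t\to\infty$, to the \emph{same} half-plane surface, and that this convergence is conformal with dilatation tending to $1$, so that the Teichm\"uller distance between corresponding points tends to zero. Fix $X\in\mathcal{T}_g$ and a \emph{generic} measured lamination $\lambda$: here ``generic'' means $\lambda$ is minimal, filling and uniquely ergodic, so that it contains no closed leaf and the components of its complement in $X$ are ideal polygons with at most one singular vertex --- this is precisely the regime that produces higher-order poles, i.e.\ half-plane ends, rather than the order-$2$ cylinder ends of the classical closed-trajectory picture (Theorem \ref{thm:order2}). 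Write $\mathrm{gr}_t$ for the grafting ray $t\mapsto \mathrm{gr}_{t\lambda}(X)$.

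First I would analyze the conformal limit of the grafting ray. Grafting by $t\lambda$ inserts the Euclidean region $\lambda\times[0,t]$; as $t\to\infty$, fixing a basepoint in a complementary region, each complementary $k$-gon together with the $k$ Euclidean strips emanating from its sides should converge geometrically to a planar end built from $k$ half-planes (Definition \ref{defn:pend}), hence to a pole of order $k+2$, the residue read off from the transverse measure of $\lambda$ across loops around the singular leaves, and the leading-order term from the conformal geometry of the complementary region in $X$ together with the relative widths of the strips. Thus $\mathrm{gr}_t$ converges to a half-plane surface $\Sigma_\infty$; Theorem \ref{thm:main} of this paper is exactly what lets me name $\Sigma_\infty$ concretely, as the half-plane surface realizing the data $\mathcal{D}$ just described on the limiting punctured Riemann surface. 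To upgrade ``converges geometrically'' to ``converges conformally,'' I would repeat the scheme of \S8--\S12: estimate the moduli of the collar annuli separating the wide part of the grafted region from the thick part, truncate the planar ends at heights $H_t$ matched to the shrinking of these collars (the analogue of Lemma \ref{lem:uhi}), and apply the quasiconformal extension lemmas (Corollary \ref{cor:corqclem}, Lemma \ref{lem:jyva}) to produce $(1+\epsilon_t)$-quasiconformal maps from $\mathrm{gr}_t$ minus the wide Euclidean part onto $\Sigma_\infty$ minus neighborhoods of its poles, with $\epsilon_t\to0$.

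Next I would exhibit a Teichm\"uller ray $R_s$ with the same conformal limit at infinity. Take the half-plane differential $q_\infty$ on $\Sigma_\infty$; its metric spine is a graph dual to $\lambda$. Stretching along the horizontal foliation, a Teichm\"uller geodesic whose terminal vertical data is $\lambda$ degenerates, and its thick part limits to a half-plane differential with poles where the high zeros have ``opened up.'' The base Riemann surface and the scaling of the defining quadratic differential remain free parameters, and I would fix them --- using the prescription of residues and, crucially, of leading-order terms supplied by Theorem \ref{thm:main} (this is where the constants $H_0^j$ of \S11 reappear, now as the choice of base point and scale of the Teichm\"uller differential) --- so that the limiting half-plane differential agrees with $q_\infty$, including leading-order terms. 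After a time reparametrization $s=s(t)$ matching the circumferences and heights of the grafted strips of $\mathrm{gr}_t$ with those of the expanding Euclidean region of $R_{s(t)}$, both rays carry quasiconformal maps to $\Sigma_\infty$ of dilatation $1+o(1)$ off neighborhoods of the poles, while the complementary Euclidean pieces become isometric up to $o(1)$. Composing yields a $(1+\delta_t)$-quasiconformal homeomorphism $\mathrm{gr}_t\to R_{s(t)}$ with $\delta_t\to0$, hence $d_{\mathcal{T}}(\mathrm{gr}_t,R_{s(t)})\le \tfrac{1}{2}\log(1+\delta_t)\to0$, which is strong asymptoticity.

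The hard part is twofold. First, one needs genuine \emph{conformal} convergence of each ray to $\Sigma_\infty$ --- not merely Gromov--Hausdorff convergence of the underlying singular flat metrics --- and, since a uniqueness statement for half-plane differentials is only conjectural (\S13.2), one cannot shortcut this by simply declaring ``both limits are $\Sigma_\infty$ because the data match''; the delicate modulus-and-collar estimates of the type carried out in \S8 must be performed on both sides. Second, and this is the real crux, one must \emph{synchronize} the two families: show that the grafted region of $\mathrm{gr}_t$ and the degenerating region of $R_{s(t)}$ become isometric planar-end truncations at \emph{comparable} heights, which pins down the reparametrization $s(t)$ and, via a Lemma \ref{lem:uhi}-style relation between truncation height and the conformal size of the excised disk, forces the choice of base point and scale for the Teichm\"uller ray. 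Verifying that the relative scales of the half-planes arising from grafting are exactly those realizable along a Teichm\"uller ray --- i.e.\ that the leading-order terms $c_j$ can be matched --- is precisely the step where the control of $c_j$ in Theorem \ref{thm:main} is indispensable, and is where I expect the main obstacle to lie.
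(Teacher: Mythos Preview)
This statement is not proved in the present paper: it is quoted from the earlier work \cite{Gup1} as background, and no proof is given here to compare against. What \S13.1 does contain is a brief sketch of how Theorem~\ref{thm:main} is intended to feed into the \emph{generalization} to all directions announced for \cite{Gup25}; your proposal is essentially a fleshed-out version of that sketch, not of the original generic-case argument.

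There is also a conceptual inversion in your plan. For a \emph{generic} (arational) lamination the complementary regions are all ideal triangles, and, as the paper itself notes in \S13.3, the geometric limit along the ray is simply a disconnected collection of $2g-2$ copies of $(\mathbb{C},\,z\,dz^2)$. These elementary pieces require none of the machinery of Theorem~\ref{thm:main}: there is no punctured Riemann surface of positive genus on which one must \emph{find} a half-plane differential with prescribed local data --- the limit is already explicit. The point of Theorem~\ref{thm:main}, and the reason the paper says it is used in \cite{Gup25}, is precisely the \emph{non}-generic case, where the lamination is non-filling, the limit components $X_\infty$ carry nontrivial topology, and one must produce a conformally equivalent half-plane surface $Y_\infty$ with specified orders, residues, and leading-order terms (including order-$2$ poles, as the paper mentions). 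So your claim that Theorem~\ref{thm:main} is ``indispensable'' for the generic case reverses the logical dependence: the generic result of \cite{Gup1} predates and does not rely on this paper, while the extension to all directions in \cite{Gup25} is where the present existence theorem enters.
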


The main result of \cite{Gup25} is a generalization of the above asymptoticity result to \textit{all} directions.
The idea of the proof is to consider the conformal limit of the grafting ray, and find a conformally equivalent singular flat surface that shall be the conformal limit of the corresponding Teichm\"{u}ller ray. The strong asymptoticity is shown by adjusting this conformal map to almost-conformal maps between surfaces along the rays. \\

The result of this paper is used to find the conformally equivalent singular flat surface mentioned above: namely, Theorem \ref{thm:main} can be generalized easily to include poles of order $2$, which correspond to half-infinite cylinders in the quadratic differential metric. This is used to obtain a  (generalized) half-plane surface $Y_\infty$ and a conformal map
\begin{equation*}
g:X_\infty \to Y_\infty
\end{equation*}
for each component $X_\infty$ of the conformal limit of the grafting ray.\\
 Prescribing the leading order term, or equivalently the derivative of the above conformal map $g$, helps to construct the controlled quasiconformal gluings of truncations of these infinite-area surfaces. 

\subsection{The question of uniqueness}
The construction of single-poled half-plane differentials on surfaces (see \S4.4) proceeds by introducing a slit in the metric spine of a single-poled hpd  $\hat{\mathbb{C}}$ as above, and gluing by an interval-exchange. (This does not affect the residue and leading order coefficient at the pole.)\\

Since there are non-unique choices of single-poled hpds (of order greater than $4$) on $\hat{\mathbb{C}}$ which have the same residue and leading order coefficient (see \S4.2), one can see that uniqueness does not hold in general, in Theorem \ref{thm:main}.\\

However, we conjecture:

\begin{conj} When all the orders of the poles are $4$, the half-plane differential with prescribed residues and leading order terms that exists by Theorem \ref{thm:main}, is unique.
\end{conj}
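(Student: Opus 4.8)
The natural strategy is to characterize an order-$4$ half-plane differential on $(\Sigma,P)$ with prescribed residues $(a_j)$ and leading terms $(c_j)$ as the unique solution of a variational problem depending only on these data --- an analogue, for poles of order $4$, of the Jenkins--Strebel extremal characterization (Theorem~\ref{thm:js}) that underlies Strebel's uniqueness theorem for double-order poles (\cite{Streb}). The feature that must be exploited throughout is that an order-$4$ planar end consists of exactly \emph{two} half-planes glued cyclically, so its germ is rigid: once the residue $a_j$ is fixed the end carries no further modulus. This is precisely what fails for even orders $\geq 6$, where a planar end of three or more half-planes has a genuine shape parameter, and where uniqueness indeed fails (see the constructions in \S4.2 and \S4.4). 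So any proof must use the hypothesis $n_j=4$ in an essential, quantitative way.

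The first step would be to pass to compact models by truncation. Given a candidate differential $q$, truncate each end at a large height $H$ in the sense of Definition~\ref{defn:trunc}, obtaining a compact surface $\Sigma(q,H)$ with $q$-polygonal boundary whose sides all have length $H$ except one horizontal side of length $H+a_j$ on the $j$-th boundary component; cutting each end along the two vertical boundary rays that separate its two half-planes exhibits $\Sigma(q,H)$ as a compact surface carrying a quadratic differential whose horizontal foliation decomposes into rectangles between boundary arcs --- this is exactly the ``rectangular surface'' of \textit{Step 1} (\S6). The orders and residues fix the combinatorics and the horizontal widths of this decomposition, and by Lemma~\ref{lem:uh} together with Lemma~\ref{lem:pullb} the leading terms $c_j$ fix the conformal ``scale'' of each truncated end relative to $\Sigma$; what is \emph{not} fixed a priori is the conformal structure of $\Sigma(q,H)$, and that is where competing differentials differ. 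One therefore phrases the extremal problem intrinsically on $\Sigma\setminus P$: among meromorphic quadratic differentials with a pole of order $4$, residue $a_j$ and leading term $c_j$ at each $p_j$, the half-plane differential should be the unique one whose truncation $\Sigma(\cdot,H)$ has strictly smallest area for every large $H$ --- just as a Jenkins--Strebel differential minimizes norm among competitors with prescribed cylinder heights, heuristically because on half-planes the competing area is all pushed off to infinity. Uniqueness would then follow from the strictness of this minimal-area inequality, passing to the limit $H\to\infty$.

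A variant worth pursuing, which avoids truncation, is the harmonic-maps route used for the Jenkins--Strebel case (\cite{Wolf1}): realize the order-$4$ half-plane differential as the Hopf differential of an energy-minimizing harmonic map from $\Sigma\setminus P$ to its metric spine $\mathcal G(q)$ (Definition~\ref{defn:mspine}), a finite metric graph with $2n$ infinite rays, $n_j-2=2$ of them running into each puncture $p_j$, and deduce uniqueness from uniqueness of the energy minimizer. Here the order-$4$ hypothesis would again enter through the combinatorial rigidity of the target near its ends: a $2$-ray end admits no nontrivial reparametrization, whereas a $(\geq 3)$-ray end does, which is the same source of non-uniqueness for higher orders.

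The main obstacle --- and the reason this remains only a conjecture --- is twofold. First, the truncation height $H$ is not conformally canonical: the small disks one excises around the $p_j$ depend on $q$ itself, the very phenomenon that forced the delicate geometric control in the existence proof (\S6--\S12), so one cannot fix a single compact surface and must instead run the extremal problem on the infinite-area surface and control the limit $H\to\infty$, where the compactness and strict-convexity arguments become delicate. Second, one must isolate exactly where $n_j=4$ is used, since the argument has to genuinely break down for $n_j\geq 6$; turning the qualitative statement ``an order-$4$ end has no shape modulus'' into the strict inequality that forces uniqueness is the crux, and as far as I can see it requires a genuinely new ingredient rather than a routine adaptation of Strebel's comparison.
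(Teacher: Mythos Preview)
The statement you are addressing is a \emph{conjecture} in the paper, not a theorem: the paper offers no proof, only the remark in \S13.2 that uniqueness fails for higher-order poles (via the constructions of \S4.2 and \S4.4) and the conjecture that it should hold when all orders equal $4$. There is therefore no ``paper's own proof'' to compare your proposal against.

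Your write-up is appropriately calibrated to this: you do not claim a proof, you sketch two plausible strategies (a truncated extremal-area problem and a harmonic-maps-to-spine approach), and you correctly identify the two serious obstacles --- the non-canonical dependence of the truncation on $q$ itself, and the need to isolate a quantitative role for the hypothesis $n_j=4$ that genuinely fails for $n_j\geq 6$. Your observation that an order-$4$ planar end has exactly two half-planes and hence no internal shape modulus is the right heuristic for why $4$ is special, and it is consistent with the paper's counterexamples for higher order. But as you yourself say, turning that rigidity into a strict inequality or a uniqueness-of-minimizer statement is a genuinely new ingredient, not supplied here. So your proposal is an honest and reasonable discussion of approaches to an open problem, not a proof, and should be read as such.
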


and more generally one can ask:

\begin{ques} Does uniqueness of the half-plane differential hold if one prescribes further local data, in addition to the order of pole, residue and leading order term?
\end{ques}

\subsection{Limits of Teichm\"{u}ller rays}
One obtains half-plane surfaces as geometric limits of Teichm\"{u}ller rays (details in the forthcoming paper \cite{Gup25}). Roughly speaking, along a Teichm\"{u}ller geodesic  there is a stretching of the quadratic-differential metric in the ``horizontal" direction (after rescaling we can assume that distances in the vertical direction remains unchanged). This increases the area of the surface monotonically, and stretches a neighborhood of the critical graph of vertical saddle-connections, to a half-plane surface.\\

Generically (for directions determined by \textit{arational} laminations) one obtains a collection of $2g-2$ half-plane differentials $(\mathbb{C}, zdz^2)$, and more interesting limits are obtained for directions determined by \textit{non-filling} laminations. \\

One of the questions to be addressed in forthcoming work is:

\begin{ques} Given a collection of half-plane surfaces with pairings of poles with matching order and residues, is it possible to construct a Teichm\"{u}ller ray with that (disconnected) half-plane surface as a limit?
\end{ques}

\appendix
\section{Criteria for convergence}
Let $\Sigma$ be a Riemann surface with marked points $p_1,p_2,\ldots p_n$, and $k_1,k_2,\ldots k_n$ a tuple of integers such that $k_j\geq 4$. Let $\widehat{\mathcal{Q}}_m(\Sigma)$ be the  meromorphic quadratic differentials on $\Sigma$ with a pole of order $k_j$ at $p_j$, for each $1\leq j\leq n$.\\

We prove here some criteria for convergence of meromorphic quadratic differentials. As throughout this paper, for ease of notation we shall write these criteria only for the case of a single marked point $p$ of order $k$, but the results hold for any number of them. We list them in order of sophistication - the later criteria will depend on the previous ones. The final one was used in the proof of Theorem \ref{thm:main} (see \S9).\\

For the following lemma, one might find it useful to keep in mind this toy example:\\

\textit{Example.} Let $f_i:\mathbb{C}\to \mathbb{C}$ be a sequence of meromorphic functions such that in an open set $U$ that is the complement of the closed ball $\overline{B_{R_0}(0)}$,  we have that $f_i(z) = \frac{\phi_i(z)}{z^n}$ for some fixed $n\geq 1$, where $\{\phi_i(z)\}_{i\geq 1}$ are holomorphic functions on $U$ that form a normal family. Then for any $R >R_0$ the values that $f_i$ take on the circle $\partial B_{R}(0)$ are uniformly bounded (independent of $i$). By the maximal principle, the $f_i$s  are uniformly bounded on $B_{R}(0)$, and hence after passing to a subsequence, they converge uniformly on compact sets to a meromorphic function $f$.\\

\begin{lem}[Criterion 1]\label{lem:conv1} Let $\{q_i\}_{1\leq i <\infty}$ be a sequence of meromorphic quadratic differentials in $\mathcal{Q}_m(\Sigma)$ having the local expressions
\begin{equation*}
q_i(z) = \frac{\phi_i(z)}{z^k}dz^2
\end{equation*}
in a fixed coordinate neighborhood $(U,z)$ around the pole $p$. If the holomorphic functions $\{\phi_i\}_{1\leq i <\infty}$ form a normal family and converge to a holomorphic function non-vanishing at $0$ then $q_i\to q\in \widehat{\mathcal{Q}}_m(\Sigma)$ after passing to a subsequence. 
\end{lem}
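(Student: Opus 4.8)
The plan is to put the whole sequence inside the finite-dimensional vector space $\mathcal{Q}_m(\Sigma)$ of meromorphic quadratic differentials on $\Sigma$ with a pole of order at most $k$ at $p$ (finite-dimensionality holds by Riemann--Roch, cf.\ \S2), and then extract a convergent subsequence by a boundedness argument. Since every $q_i$ lies in this fixed finite-dimensional space, it suffices to exhibit a norm on $\mathcal{Q}_m(\Sigma)$ with respect to which $\{q_i\}$ is bounded: boundedness then automatically gives a convergent subsequence, because on a finite-dimensional space all norms are equivalent and bounded sets are precompact. The limit will then be identified locally.

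Concretely, fix $r_0>0$ small enough that $\overline{B(r_0)}$ lies in the image of the coordinate chart $z$ on $U$, and for $q\in\mathcal{Q}_m(\Sigma)$ with local expression $q=(\psi(z)/z^k)\,dz^2$ near $p$ set
\[
\lVert q\rVert := \sup_{\lvert z\rvert = r_0}\lvert\psi(z)\rvert .
\]
This is clearly a seminorm, and it is in fact a norm: if $\lVert q\rVert = 0$ then $\psi$ vanishes on the circle $\lvert z\rvert = r_0$, hence $\psi\equiv 0$ by the identity theorem, so $q$ vanishes on $U\setminus p$ and therefore on all of $\Sigma\setminus p$, since $q$ is holomorphic off $p$ and $\Sigma$ is connected. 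By hypothesis the $\phi_i$ converge, uniformly on compact subsets, to a holomorphic function $\phi$; in particular the reals $\lVert q_i\rVert = \sup_{\lvert z\rvert = r_0}\lvert\phi_i(z)\rvert$ are bounded. Hence $\{q_i\}$ is a bounded subset of $\mathcal{Q}_m(\Sigma)$, and after passing to a subsequence $q_i\to q$ in $\mathcal{Q}_m(\Sigma)$; writing the $q_i$ in a fixed basis, this convergence is in particular uniform on compact subsets of $\Sigma\setminus p$ in every chart.

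It remains to see $q\in\widehat{\mathcal{Q}}_m(\Sigma)$. Restricting the convergence $q_i\to q$ to $U\setminus p$, the local representatives $\phi_i(z)/z^k$ converge both to $\phi(z)/z^k$ and to the local representative of $q$; hence $q$ has local expression $(\phi(z)/z^k)\,dz^2$ near $p$, and since $\phi(0)\neq 0$ this is a pole of order exactly $k$, i.e.\ $q\in\widehat{\mathcal{Q}}_m(\Sigma)$. The only genuinely non-routine points are the verification that evaluation on a circle defines a \emph{norm} rather than merely a seminorm (where the identity theorem and connectedness of $\Sigma$ enter) and the use of Riemann--Roch to know the ambient space is finite-dimensional so that a bounded sequence subconverges; the rest is standard. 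As an alternative matching the toy example, one can avoid finite-dimensionality altogether: fixing a conformal metric $\sigma$ of nonpositive curvature on the compact bordered surface $S=\Sigma\setminus z^{-1}(B(r_0))$, the function $\lvert q_i\rvert_\sigma$ has subharmonic logarithm where $q_i\neq0$ and hence attains its maximum on $\partial S$, where the $q_i=(\phi_i(z)/z^k)\,dz^2$ are uniformly bounded; Montel's theorem then yields a locally uniformly convergent subsequence, whose limit is identified exactly as above.
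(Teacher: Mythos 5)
Your main argument is correct and takes a genuinely different, more direct route than the paper's. The paper works through the $L^1$-norm: it restricts each $q_i$ to a compact bordered subsurface $\Sigma'=\Sigma\setminus V$, proves (citing Dumas) that $\lVert q_i\rVert_{L^1(\Sigma')}$ is controlled by the $L^1$-mass on a fixed small disk $D_\delta\subset U\setminus V$, observes that this mass is uniformly bounded by the normality of the $\phi_i$, and then extracts a locally uniformly convergent subsequence; non-vanishing of the limit of $\phi_i$ at $0$ pins down the pole order. You instead invoke the finite-dimensionality of $\mathcal{Q}_m(\Sigma)$ (Riemann--Roch) head-on, construct the explicit norm $\lVert q\rVert=\sup_{|z|=r_0}|\psi(z)|$, verify it is a genuine norm via the identity theorem and connectedness of $\Sigma$, and conclude by Heine--Borel and equivalence of norms. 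This is shorter and eliminates the $L^1$-comparison claim and its Dumas citation entirely; the paper's route is heavier but would carry over to settings where the ambient space is not a priori finite-dimensional (infinite-type surfaces, bordered surfaces, etc.), which is not needed here. One small slip in your ``alternative matching the toy example'': for $\log|q_i|_\sigma=\log|\varphi_i(z)|-2\log\sigma(z)$ to be subharmonic away from the zeros of $q_i$, you want the background metric $\sigma$ to have \emph{nonnegative} curvature (e.g.\ a flat metric), since $K_\sigma\geq 0$ is equivalent to $\log\sigma$ being superharmonic, hence $-2\log\sigma$ subharmonic; with nonpositive curvature the inequality points the wrong way and you only get superharmonicity, i.e.\ a minimum principle rather than the maximum principle you invoke. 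This does not affect your main argument, which stands.
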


\begin{proof}
Let $V \subset U$ be the radius-$1/2$ disk containing $p$,  via the conformal chart $\phi$ that takes $U$ to the unit disk. 
The restriction of each $q_i$ to the compact subsurface $\Sigma^\prime = \Sigma\setminus V$ is holomorphic in its interior. Our goal is to show that they are locally uniformly bounded, and it is enough to show that their $L^1$-norms (or equivalently, the $q_i$-areas of $\Sigma^\prime$) have a uniform bound.\\

Fix an arbitrary conformal metric on $\Sigma^\prime$ and $\delta>0$ and let $D_\delta$ is a radius-$\delta$ disk on $\Sigma^\prime$. Then we have:\\

\textit{Claim. $\lVert q_i\rVert_{L^1(\Sigma^\prime)} \leq C\lVert q_i\rVert_{L^1(D_\delta)}$, where $C$ is a constant depending on $\Sigma^\prime$ and $\delta$.}\\

\textit{Proof.} The proof of this is identical to that of Lemma 12.1 of \cite{Dum2}. By rescaling, it suffices to prove that for \textit{unit-norm} quadratic differentials, $\lVert q_i\rVert_{L^1(D_\delta)}\geq 1/C$ for some constant $C$. We proceed by contradiction: if not, we obtain a sequence of unit-norm quadratic differentials on $\Sigma^\prime$ such that the $q_i$-area of $D_\delta$ goes to zero. By compactness of unit-norm quadratic differentials, there is a convergent subsequence, and the limiting quadratic differential has unit norm. However it vanishes on $D_\delta$, and hence, by holomorphicity, is identically zero on the surface, which is a contradiction.\\

By the above claim it is enough to show that the $q_i$-area of \textit{some} radius-$\delta$ disk is uniformly bounded (independent of $i$). We choose the metric, and $\delta$, such that the annular region $U\setminus V$ contains such a $\delta$-disk $W$. The  sequence $\{q_i\}_{i\geq 1}$ then restricts to a uniformly bounded family on $W$ by assumption, and this completes the proof. The final assumption of non-vanishing at $0$ ensures that the limiting meromorphic quadratic differential has a poles of the same order $k$ at $p$, so the limit lies in the space $\widehat{\mathcal{Q}}_m$.
\end{proof}

In what follows, we shall fix a sequence $\{q_i\}_{1\leq i <\infty}$ be a sequence of meromorphic quadratic differentials in $\widehat{\mathcal{Q}}_m(\Sigma)$  as in Criterion 1 above, together with a coordinate neighborhood $U$ around the pole $p$. In what follows, we shall implicitly identify $U$ with the disk $\mathbb{D}$ in $z$-coordinates.

\begin{lem}[Criterion 2]\label{lem:conv2} Let there exist a fixed meromorphic quadratic differential $q_0$ in $\mathbb{C}$ with a pole of order $k$,  and a sequence of univalent conformal maps $f_i:(U, z) \to (\mathbb{C}, 0)$ where $i\geq 1$ such that the restriction of $q_i$ to $U$ is the pullback of $q_0$ via $f_i$. Moreover, assume that there is a uniform bound on the derivatives:
\begin{equation}\label{eq:db}
c < \left| f_i^\prime (0) \right| < C
\end{equation}
for all $i$, for some fixed reals $c, C$.\\
Then $q_i\to q\in \widehat{\mathcal{Q}}_m(\Sigma)$ after passing to a subsequence. 
\end{lem}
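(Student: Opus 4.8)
The plan is to deduce this from Criterion~1 (Lemma~\ref{lem:conv1}) by computing the local expression of each $q_i$ at $p$ and checking that its numerator belongs to a normal family whose limit does not vanish at $0$.

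First I would expand the fixed differential near its pole as $q_0 = \psi(w)w^{-k}\,dw^2$, where $\psi$ is holomorphic on a disk $B_{R_0}(0)$ with $\psi(0)\neq 0$. Since $f_i\colon U\cong\mathbb{D}\to\mathbb{C}$ is univalent with $f_i(0)=0$, the change-of-coordinates law gives, in the $z$-chart,
\begin{equation*}
q_i(z) = q_0(f_i(z))\,f_i^\prime(z)^2 = \frac{\psi(f_i(z))\,f_i^\prime(z)^2}{f_i(z)^k}\,dz^2 .
\end{equation*}
Writing $f_i(z) = z\,g_i(z)$ with $g_i$ holomorphic and $g_i(0) = f_i^\prime(0)$, this reads $q_i(z) = \phi_i(z)z^{-k}\,dz^2$ with $\phi_i(z) = \psi(f_i(z))\,f_i^\prime(z)^2\,g_i(z)^{-k}$.

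Next I would prove normality of $\{\phi_i\}$ on a small disk $B_\rho(0)$ whose radius does not depend on $i$. The Koebe growth and distortion theorems for univalent maps (see \cite{Pom}), combined with the two-sided bound $c<|f_i^\prime(0)|<C$, give $|f_i(z)|\le C|z|(1-|z|)^{-2}$, $|f_i^\prime(z)|\le C(1+|z|)(1-|z|)^{-3}$, and $|g_i(z)| = |f_i(z)|/|z|\ge c(1+|z|)^{-2}$. Hence for $\rho$ small enough (depending only on $C$ and $R_0$) one has $f_i(B_\rho)\subset B_{R_0/2}(0)$ for every $i$, so $\psi\circ f_i$ is defined and uniformly bounded on $B_\rho$, $f_i^\prime$ is uniformly bounded there, and $g_i$ is bounded away from $0$ there. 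Therefore $\{\phi_i\}$ is a uniformly bounded family of holomorphic functions on $B_\rho(0)$, hence normal; after passing to a subsequence, $\phi_i\to\phi$ locally uniformly on $B_\rho(0)$.

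Finally I would check the non-vanishing and invoke Criterion~1. We have $\phi_i(0) = \psi(0)f_i^\prime(0)^2 g_i(0)^{-k} = \psi(0)f_i^\prime(0)^{2-k}$, so $|\phi_i(0)| = |\psi(0)|\,|f_i^\prime(0)|^{2-k}$ lies between two positive constants depending only on $|\psi(0)|$, $c$, $C$, $k$; thus $\phi(0)=\lim\phi_i(0)\neq 0$. Since the $q_i$ are globally defined on $\Sigma$ we may shrink the coordinate neighbourhood, replacing $U$ by $B_\rho(0)$; then the hypotheses of Criterion~1 hold, and $q_i\to q\in\widehat{\mathcal{Q}}_m(\Sigma)$ after passing to a further subsequence. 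The main obstacle is obtaining the uniform control near $0$ on a single disk independent of $i$: one must bound $\psi\circ f_i$ and $f_i^\prime$ from above while keeping $g_i = f_i/z$ bounded away from $0$, and it is precisely the lower bound $c<|f_i^\prime(0)|$ (together with the Koebe one-quarter and distortion estimates) that prevents the denominator $g_i^k$ from degenerating.
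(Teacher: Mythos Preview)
Your proof is correct and follows essentially the same route as the paper's: reduce to Criterion~1 by showing the numerators $\phi_i$ form a normal family (via Koebe-type bounds on the univalent $f_i$) and that the limit is non-vanishing at $0$ because the leading term scales like $|f_i'(0)|^{2-k}$ (the paper invokes Lemma~\ref{lem:pullb} for this). The only difference is that the paper's proof is a two-sentence sketch, while you have filled in the explicit estimates.
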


\begin{proof} One applies Criterion 1: it is a standard fact that the derivative bound (\ref{eq:db}) implies that the family of univalent maps is normal, and so are the local expressions of the pullback differentials. Moreover, by Lemma \ref{lem:pullb} and the lower bound on the derivatives above,  the leading order term (coefficient of $1/z^n$) does not vanish in the limit, and hence the limiting differential $q$ has the pole of correct order.
\end{proof}

As in \S3.3 a planar end $\mathcal{P}_H$ can be thought of as the restriction of a ``standard" holomorphic quadratic differential on $\mathbb{C}$ to a neighborhood of $\infty$, or by inversion, the restriction of a meromorphic quadratic differential on $\mathbb{C}$  (see (\ref{eq:stan})), to a neighborhood $U_H$ of the pole at $0$. Note that the neighborhoods $U_H$ shrink down to $0$, at a controlled rate (Lemma \ref{lem:uh}).\\

In what follows, we shall consider, as earlier in the paper, a sequence $H_i\to \infty$ such that 
\begin{equation}\label{eq:shrink}
dist(0,\partial U_{H_i}) = O(r_i)
\end{equation}
where $r_i = 2^{-i}$ (see Lemma \ref{lem:uhi}).

\begin{lem}[Criterion 3]\label{lem:conv3}  Let there be the same setup as in Criterion 2, except that instead of a derivative bound (\ref{eq:db}) we have that $f_i$ maps the disk $B_{r_i} = \{z\in U | \left|z\right| \leq r_i\}$ to $U_{H_i}$ for the sequence $H_i\to \infty$. Then $q_i\to q\in \widehat{\mathcal{Q}}_m(\Sigma)$ after passing to a subsequence. 
\end{lem}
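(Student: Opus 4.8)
The plan is to reduce Criterion 3 to Criterion 2 by recovering the derivative bound (\ref{eq:db}) from the two pieces of geometric data we are handed: the controlled shrinking (\ref{eq:shrink}) of the image neighborhoods $U_{H_i}$, and the fact that $f_i$ carries the round disk $B_{r_i}$ onto $U_{H_i}$. Since $f_i$ is univalent on $U$, its behaviour on the subdisk $B_{r_i}$ is governed by the Koebe distortion theorem, and the point is simply that the diameter of the image $U_{H_i}$ is comparable to $r_i$, which forces $\left|f_i^\prime(0)\right|$ to stay bounded away from $0$ and $\infty$.

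Concretely, I would first record that by (\ref{eq:shrink}) (equivalently Lemma \ref{lem:uhi}) there are constants $D_1^\prime, D_2^\prime > 0$, independent of $i$, with
\[
D_1^\prime r_i \leq \mathrm{dist}\bigl(0, \partial U_{H_i}\bigr) \leq D_2^\prime r_i
\]
for all $i$ large. Next, applying the two-sided Koebe estimate (Corollary 1.4 of \cite{Pom}, exactly as in the proof of Corollary \ref{cor:uhmap}) to the univalent map $w \mapsto f_i(r_i w)$ on $\mathbb{D}$, whose derivative at $0$ is $r_i f_i^\prime(0)$ and whose image is $f_i(B_{r_i})$, gives
\[
\frac{1}{4} r_i \left|f_i^\prime(0)\right| \leq \mathrm{dist}\bigl(0, \partial f_i(B_{r_i})\bigr) \leq r_i \left|f_i^\prime(0)\right|.
\]
Since $f_i(B_{r_i}) = U_{H_i}$, combining the two displays and cancelling $r_i$ yields $D_1^\prime \leq \left|f_i^\prime(0)\right| \leq 4 D_2^\prime$; in particular the hypothesis (\ref{eq:db}) of Criterion 2 holds with $c = D_1^\prime/2$ and $C = 8 D_2^\prime$. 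Criterion 2 now applies directly and produces, after passing to a subsequence, a limit $q_i \to q \in \widehat{\mathcal{Q}}_m(\Sigma)$; the lower bound $\left|f_i^\prime(0)\right| \geq D_1^\prime$ together with Lemma \ref{lem:pullb} guarantees, as in that proof, that the leading coefficient of $q$ does not vanish, so that the pole has order exactly $k$.

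There is no substantial obstacle here: all the analytic content lives in Criterion 2 (and, through it, Criterion 1). The only points needing care are bookkeeping ones --- correctly rescaling the Koebe distortion inequality from the unit disk to $B_{r_i}$, and using the hypothesis ``$f_i$ maps $B_{r_i}$ to $U_{H_i}$'' as an equality of images, so that $\partial f_i(B_{r_i})$ and $\partial U_{H_i}$ may be matched up when combining the distance estimates.
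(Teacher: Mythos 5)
Your proposal is correct and takes essentially the same approach as the paper: reduce to Criterion~2 by extracting uniform two-sided bounds on $\left|f_i^\prime(0)\right|$ from the fact that $f_i$ carries the disk $B_{r_i}$ onto $U_{H_i}$, whose boundary distance to $0$ is comparable to $r_i$. The only cosmetic difference is that the paper invokes the Koebe growth theorem (Theorem 1.3 of \cite{Pom}) at a point $|z|=r_i$, while you rescale and apply the distance-to-boundary estimate (Corollary 1.4 of \cite{Pom}) at the origin, as is done elsewhere in the paper; your version makes the two-sided derivative bound a little more explicit.
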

\begin{proof} One only needs to prove the derivative bound (\ref{eq:db}) and apply Criterion 2. Recall the following growth theorem  concerning a univalent conformal map $f:\mathbb{D}\to \mathbb{C}$ such that $f(0)=0$ (see, for example, Theorem 1.3 of \cite{Pom}):

\begin{equation*}
\left| f^\prime (0)\right|\frac{\left|z\right|}{(1+\left| z\right|)^2} \leq \left| f(z)\right| \leq \left| f^\prime (0)\right|\frac{\left|z\right|}{(1-\left| z\right|)^2}
\end{equation*}

Applying this to $f=f_i$, and $\left|z\right| = r_i$, we obtain from (\ref{eq:shrink}) that $\left|f(z)\right| = O(r_i)$ and the required bounds on $\left|f_i^\prime(0)\right|$ follow.

\end{proof}

\begin{lem}[Criterion 4]\label{lem:conv4}  Let there be the same setup as in Criterion 3, except that instead of $f_i$ mapping $B_{r_i}$ to $U_{H_i}$, it maps some simply connected set $V_i$ containing $0$ to $U_{H_i}$, where 
\begin{equation}\label{eq:dmb}
{d}r_i  \leq dist(0, \partial V_i) \leq {D}r_i
\end{equation}
for each $i\geq 1$ and some fixed reals $d, D$. (Here $dist(0,\partial U_{H_i}) = r_i$ as before.)\\
Then $q_i\to q\in \widehat{\mathcal{Q}}_m(\Sigma)$ after passing to a subsequence. 

\end{lem}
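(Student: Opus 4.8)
The plan is to reduce Criterion~4 to Criterion~2 (Lemma~\ref{lem:conv2}) by establishing the uniform two-sided derivative bound \eqref{eq:db} for the univalent maps $f_i$. The only difference from Criterion~3 is that $f_i^{-1}(U_{H_i})$ is now a simply connected set $V_i$ of conformal size comparable to $r_i$ (via \eqref{eq:dmb}) rather than the round disk $B_{r_i}$; the guiding principle is that, for the argument of Criterion~3, it is the \emph{conformal} size of this preimage, not its shape, that controls $|f_i'(0)|$.

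Concretely, I would first let $g_i\colon\mathbb{D}\to V_i$ be the Riemann map normalized by $g_i(0)=0$ (this exists since $V_i\subset\mathbb{D}$ is simply connected and proper). Applying the distance--derivative estimate of Pommerenke (Corollary~1.4 of \cite{Pom}), exactly as in the proof of Corollary~\ref{cor:uhmap}, to $g_i$ at the origin gives
\[
\tfrac14\,|g_i'(0)|\ \le\ \mathrm{dist}(0,\partial V_i)\ \le\ |g_i'(0)|,
\]
so by \eqref{eq:dmb} we get $d\,r_i\le |g_i'(0)|\le 4D\,r_i$. Next, since $f_i$ is univalent on all of $U=\mathbb{D}$ and carries $V_i$ conformally onto $U_{H_i}$, the composition $f_i\circ g_i\colon\mathbb{D}\to U_{H_i}$ is a conformal bijection fixing $0$, hence coincides, up to a rotation, with the inverse of the uniformizing map $\phi_{H_i}\colon U_{H_i}\to\mathbb{D}$ of Corollary~\ref{cor:uhmap}. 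Thus $|(f_i\circ g_i)'(0)|=1/|\phi_{H_i}'(0)|$, and applying the same Pommerenke estimate to $\phi_{H_i}^{-1}$ (or invoking Corollary~\ref{cor:uhmap} and Lemma~\ref{lem:uh} directly) together with $\mathrm{dist}(0,\partial U_{H_i})=r_i$ gives $r_i\le |(f_i\circ g_i)'(0)|\le 4r_i$. Finally the chain rule $|(f_i\circ g_i)'(0)|=|f_i'(0)|\,|g_i'(0)|$ yields
\[
\frac{1}{4D}\ \le\ |f_i'(0)|\ =\ \frac{|(f_i\circ g_i)'(0)|}{|g_i'(0)|}\ \le\ \frac{4}{d},
\]
which is precisely the hypothesis \eqref{eq:db} of Criterion~2 with constants depending only on $d$, $D$ and the universal constants of Lemma~\ref{lem:uh}. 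An application of Lemma~\ref{lem:conv2} then produces the desired subsequential limit $q_i\to q\in\widehat{\mathcal{Q}}_m(\Sigma)$.

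The argument is essentially bookkeeping with standard univalent-function estimates, so I do not expect a serious obstacle; the one point worth isolating is the two-sided comparison $|g_i'(0)|\asymp\mathrm{dist}(0,\partial V_i)$, which is exactly why the \emph{two-sided} bound \eqref{eq:dmb} is needed. An upper bound on $\mathrm{dist}(0,\partial V_i)$ alone would yield only $|f_i'(0)|<C$, hence normality of the pullback differentials, but not the lower bound $|f_i'(0)|>c$ that (via Lemma~\ref{lem:pullb}) prevents the leading coefficient from vanishing and so keeps the limiting differential in $\widehat{\mathcal{Q}}_m$ rather than in the larger space with a pole of lower order.
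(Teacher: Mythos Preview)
Your proposal is correct and takes essentially the same approach as the paper: both reduce to Criterion~2 by uniformizing $V_i$ via a Riemann map (your $g_i$, the paper's $\phi_i$), applying the Pommerenke distance--derivative estimate twice (once to $g_i$ using \eqref{eq:dmb}, once to the composition $f_i\circ g_i$ using $\mathrm{dist}(0,\partial U_{H_i})=r_i$), and dividing via the chain rule to bound $|f_i'(0)|$ above and below. Your closing remark on why the two-sided bound \eqref{eq:dmb} is essential is a helpful addition.
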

\begin{proof} As in the proof of the previous criterion, it shall suffice to show uniform derivative bounds of $f_i$ at $0$.\\

For any conformal map $f:\mathbb{D}\to \mathbb{C}$ the following holds for any $z\in \mathbb{D}$ (see Corollary 1.4 of \cite{Pom}):
\begin{equation*}
\frac{1}{4} \left(1 - \left|z\right|^2\right)\left|f^\prime(z)\right| \leq dist (f(z), \partial f(\mathbb{D})) \leq  \left(1 - \left|z\right|^2\right)\left|f^\prime(z)\right|
\end{equation*}
Consider the conformal map $f = \phi_{i}: \mathbb{D} \to V_i\subset \mathbb{C}$ that fixes $0$.\\
In particular, for $z=0$ we get by rearranging the above inequalities and using that $\phi_i(0)=0$ that
\begin{equation}\label{eq:shk2}
dist(0, \partial f(\mathbb{D})) \leq \left|\phi_i^\prime(0)\right| \leq 4 dist(0, \partial f(\mathbb{D}))
\end{equation}
Since $\phi_i(\mathbb{D}) = V_i$ and (\ref{eq:dmb}) holds this gives
\begin{equation}\label{eq:cr1}
{d}r_i\leq  \left|\phi_{i}^\prime(0)\right| \leq 4{D}r_i.
\end{equation}

Now, consider the univalent conformal map $g = f_i \circ \phi_{i}:\mathbb{D}\to U_{H_i}\subset \mathbb{C}$. Using the assumption that  $dist(0,\partial U_{H_i}) = r_i$ and the distortion estimate (\ref{eq:shk2}) we have by the same argument:
\begin{equation}\label{eq:cr2}
\left| g^\prime(0) \right| = O(r_i)
\end{equation}

But by the chain rule  $g^\prime(0) =f_i^\prime(0)\cdot \phi_{i}^\prime(0)$ so by (\ref{eq:cr1}) and (\ref{eq:cr2}) we see that $\left|f_i^\prime(0)\right|$ have uniform bounds (independent of $i$), and one can apply Criterion 2.
\end{proof}

\subsubsection*{Varying $\Sigma$}
The previous criteria were for a fixed surface $\Sigma$. We now record a criterion, for meromorphic quadratic differentials on a sequence of surfaces converging in $\mathcal{T}_g$, to converge in the subset $\widehat{\mathcal{Q}}_m$ of the total bundle $\mathcal{Q}_m$ over $\mathcal{T}_g$ consisting of meromorphic quadratic differentials of a pole of order \textit{exactly} k.\\

Namely, consider a sequence of meromorphic quadratic differentials $\{q_i\}_{i\geq 1}$ on underlying surfaces $\{\Sigma_i\}_{i\geq 1}$ in $\mathcal{T}_g$ that converge to $\Sigma$, that is, there exist $(1+\epsilon_i)$-quasiconformal maps $h_i:\Sigma_i \to \Sigma$ where $\epsilon_i\to 0$ as $i\to \infty$. Suppose also that there exist a neighborhood $U_i$ of the pole of $q_i$ such that $h_i(U_i) = U$  for the fixed disk $U$ on $\Sigma$.

\begin{lem}[Criterion $1^\prime$] \label{lem:conv5} Assume that in the sequence just described, the restriction of $q_i$  on $U_i$ is, in local coordinates via a conformal identification $c_i:(U_i,p)\to (\mathbb{D},0)$, given by: \\
\begin{equation*}
q_i(z) = \frac{\phi_i(z)}{z^k}dz^2
\end{equation*}
where $\phi_i(z)$ are univalent holomorphic functions that form a normal family that converges to a holomorphic function non-vanishing at $0$.\\
Then $q_i\to q\in \widehat{\mathcal{Q}}_m(\Sigma)$ after passing to a subsequence. 
\end{lem}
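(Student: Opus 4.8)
The plan is to reduce \textbf{Criterion $1'$} to the fixed-surface \textbf{Criterion 1} (Lemma \ref{lem:conv1}) by transporting the quadratic differentials $q_i$ from the varying surfaces $\Sigma_i$ to the fixed surface $\Sigma$ via the quasiconformal maps $h_i$, and then exploiting that the $h_i$ are \emph{conformal away from the puncture disk} — or more precisely, that one can arrange the distortion to be supported off a neighborhood where holomorphicity is all that matters. The subtlety, compared with Criterion 1, is that pushing forward a holomorphic quadratic differential by a genuinely quasiconformal map does not give a holomorphic differential; so one cannot literally form $h_{i*}q_i$ and apply Criterion 1. Instead I would argue directly on $\Sigma_i$, using the $h_i$ only to get uniform control of moduli and areas, and invoke the compactness of unit-norm holomorphic quadratic differentials as the underlying conformal structures vary over a compact set of $\mathcal{T}_g$.

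Concretely, first I would fix the radius-$1/2$ subdisk $V\subset U$ (via the uniformizing chart) and set $V_i=c_i^{-1}(B(1/2))\subset U_i$, so that each $q_i$ is holomorphic on the compact bordered surface $\Sigma_i'=\Sigma_i\setminus V_i$. Since $h_i:\Sigma_i\to\Sigma$ is $(1+\epsilon_i)$-quasiconformal with $\epsilon_i\to 0$, the surfaces $\Sigma_i$ lie in a compact subset of $\mathcal{T}_g$ (or, more precisely, the $\Sigma_i'$ with their boundary markings converge). Then I would run exactly the argument of Lemma \ref{lem:conv1}: it suffices to bound $\|q_i\|_{L^1(\Sigma_i')}$ uniformly, and by the ``$\|q_i\|_{L^1(\Sigma_i')}\le C\|q_i\|_{L^1(D_\delta)}$'' claim — which holds here with a \emph{uniform} constant $C$ because the surfaces vary in a compact family, the compactness-of-unit-norm-differentials step in that claim now being applied along the varying conformal structures — it is enough to bound the $q_i$-area of a single $\delta$-disk $W$ contained in the annular region $U_i\setminus V_i$. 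But on $c_i(U_i)=\mathbb{D}$ the differential is $\phi_i(z)z^{-k}dz^2$ with $\{\phi_i\}$ a normal family; hence on the fixed annulus $\{1/2\le |z|\le 3/4\}$ the coefficients are uniformly bounded, giving the required uniform area bound on $W$. Passing to a subsequence, $q_i$ converges locally uniformly (on compact subsets of the $\Sigma_i'$, transported to $\Sigma$ via $h_i$) to a holomorphic quadratic differential on $\Sigma\setminus V$, which extends meromorphically across $p$ because on the annulus $\{1/2\le|z|\le 3/4\}$ the functions $\phi_i$ converge to a holomorphic $\phi$, so $q=\phi(z)z^{-k}dz^2$ near $p$; and since $\phi(0)\ne 0$ the pole has order exactly $k$, i.e.\ $q\in\widehat{\mathcal{Q}}_m(\Sigma)$.

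The main obstacle is making precise the sense in which ``$q_i$ converges'' when the underlying surfaces genuinely vary: one must check that the compactness argument in the $L^1$-comparison claim (Lemma 12.1 of \cite{Dum2}) survives when the conformal structure is allowed to move within a compact set, and that the resulting uniform local bounds on the $q_i$ really do yield a limit that is holomorphic on $\Sigma\setminus V$. This is handled by working with the quasiconformal identifications $h_i$: the pushforwards $(h_i)_*q_i$ are measurable quadratic differentials on the \emph{fixed} surface $\Sigma$ with uniformly bounded $L^1$-norm, and their Beltrami coefficients (relative to the standard complex structure) tend to $0$ in an $L^\infty$-neighborhood of $\Sigma\setminus U$; a normal-families argument, or equivalently the continuity of the Hubbard–Masur / holomorphic-differential bundle over $\mathcal{T}_g$, then gives a holomorphic limit $q$ on $\Sigma$. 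All of this is routine given the tools already assembled, so the proof amounts to: (i) uniform area bound via the normal-family hypothesis and the compact-family version of the $L^1$-comparison; (ii) extraction of a locally uniform limit using compactness of $(1+\epsilon_i)$-quasiconformal maps; (iii) identification of the limit near $p$ from $\phi_i\to\phi$, $\phi(0)\ne 0$, to land in $\widehat{\mathcal{Q}}_m(\Sigma)$.
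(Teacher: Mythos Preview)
Your proposal is correct and follows essentially the same strategy as the paper: transport the $q_i$ to the fixed surface $\Sigma$ via the $(1+\epsilon_i)$-quasiconformal maps, use the normal-family hypothesis on the $\phi_i$ together with the $L^1$-comparison argument of Criterion~1 to get uniform local bounds on the (now merely measurable) pushforward differentials, extract a subsequential limit, and conclude holomorphicity of the limit from $\epsilon_i\to 0$, with the pole order read off from $\phi(0)\neq 0$.

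The only notable difference is in the final step. You appeal somewhat abstractly to ``Beltrami coefficients tending to $0$'' and ``continuity of the holomorphic-differential bundle over $\mathcal{T}_g$'', whereas the paper is more concrete: it observes that the pushforward differentials have bounded $L^2$-norm on each fixed compact set away from $p$, passes to an $L^2$-convergent subsequence, notes that the limit is \emph{weakly} holomorphic (since the dilatations vanish in the limit), and then invokes Weyl's Lemma to upgrade to genuine holomorphicity. Your more circuitous first attempt --- arguing directly on the varying $\Sigma_i$ and only later admitting that one must push forward --- is unnecessary; the paper goes straight to the pushforward and handles the non-holomorphicity with Weyl's Lemma, which is the cleanest way to close the gap you correctly identified.
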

\begin{proof}
Via the $(1 + \epsilon_i)$-quasiconformal homeomorphisms $c_i \circ h_i^{-1}$, the functions $\phi_i$ can be pulled back to a family of locally integrable complex-valued functions on $U$. By the assumption that the $\{\phi_i\}$ are a normal family, by arguing as in the proof of Criterion 1, one shows that the  pullback quadratic differentials (which are measurable, but no longer holomorphic, sections of $K_\mathbb{C}^{\otimes 2}$ for the Riemann surface $\Sigma$) have bounded $L^2$ norms on every (fixed) compact set away from $p$, and hence converge after passing to a subsequence. Since $\epsilon_i\to 0$,  away from $p$ the limiting quadratic differential is weakly holomorphic, and hence by Weyl's Lemma, holomorphic. By the condition that the limit of $\phi_i$ is non-vanishing at $0$, the quadratic differential has a pole of order $k$ at $p$.
\end{proof}

The above lemma implies that appropriate versions of Criteria 2, 3 and 4 also hold for the case when the underlying Riemann surfaces vary and form a converging sequence. In particular, we have:

\begin{lem}[Criterion $4^\prime$] \label{lem:conv6} Assume that in the sequence just described, the restriction of $q_i$  on $U_i$ is the pullback by a univalent conformal map $f_i:U_i\to \mathbb{C}$ of a fixed meromorphic quadratic differential $q_0$ on $\mathbb{C}$. Moreover, $f_i$ maps the subdomain $V_i\subset U_i$  to $U_{H_i}\subset \mathbb{C}$, where
\begin{equation*}
dist(0,\partial U_{H_i}) = r_i
\end{equation*}
 and $V_i$ satisfies the uniform distance bounds
\begin{equation*}
{d}r_i  \leq dist(0, \partial V_i) \leq {D}r_i
\end{equation*}
as in (\ref{eq:dmb}). (Here $d, D>0$ are some constants independent of $i$.) \\
Then $q_i\to q\in \widehat{\mathcal{Q}}_m(\Sigma)$ after passing to a subsequence. 
\end{lem}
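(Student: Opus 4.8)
The plan is to transcribe the proof of Criterion 4 (Lemma \ref{lem:conv4}), replacing the appeal to Criterion 1 by an appeal to Criterion $1'$ (Lemma \ref{lem:conv5}) so as to accommodate the varying underlying surfaces. Thus it suffices to produce uniform two-sided bounds $c < |f_i'(0)| < C$, independent of $i$, on the derivatives of the univalent maps $f_i$ read in the coordinate charts $c_i:(U_i,p)\to(\mathbb{D},0)$; for then, by the Koebe distortion theorems, the family $\{f_i\circ c_i^{-1}\}$ is normal, the local coefficient functions of the pullback differentials $f_i^{\ast}q_0$ form a normal family, and by Lemma \ref{lem:pullb} together with the lower bound on $|f_i'(0)|$ their leading coefficient does not vanish in the limit. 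Criterion $1'$ then yields $q_i\to q\in\widehat{\mathcal{Q}}_m(\Sigma)$ after passing to a subsequence.

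To obtain these derivative bounds, let $\phi_i:\mathbb{D}\to V_i$ be the Riemann map with $\phi_i(0)=0$. Applying the distance--derivative estimate of Corollary 1.4 of \cite{Pom}, exactly as in the proof of Criterion 4, gives
\begin{equation*}
dist(0,\partial V_i) \leq |\phi_i'(0)| \leq 4\, dist(0,\partial V_i),
\end{equation*}
which together with the hypothesis $d\,r_i \leq dist(0,\partial V_i) \leq D\, r_i$ yields $|\phi_i'(0)| \asymp r_i$. Next apply the same estimate to the univalent map $g_i = f_i\circ\phi_i:\mathbb{D}\to U_{H_i}$, using $dist(0,\partial U_{H_i}) = r_i$, to conclude $|g_i'(0)| \asymp r_i$ as well. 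Since $g_i'(0) = f_i'(0)\,\phi_i'(0)$ by the chain rule, dividing the two estimates gives the desired uniform bounds $c < |f_i'(0)| < C$.

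The proof is therefore a routine variation on earlier arguments, and the only point requiring care is the passage to varying surfaces: normality must be checked in the coordinates used by Criterion $1'$, namely after transport by the $(1+\epsilon_i)$-quasiconformal maps $c_i\circ h_i^{-1}$. Since $\epsilon_i\to 0$ and these maps fix the relevant disk, this transport preserves the uniform bounds; this is precisely the mild technical obstacle that has already been isolated inside the proof of Criterion $1'$, so no further work is needed here and the conclusion $q_i\to q\in\widehat{\mathcal{Q}}_m(\Sigma)$ follows.
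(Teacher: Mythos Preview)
Your proposal is correct and matches the paper's intended argument: the paper does not give an explicit proof of Criterion $4'$ but simply notes that Criterion $1'$ (Lemma \ref{lem:conv5}) implies that the arguments for Criteria 2--4 carry over verbatim to the case of varying underlying surfaces. You have supplied exactly those details, obtaining the uniform derivative bounds on $f_i$ via the Koebe distance--derivative estimates just as in the proof of Criterion 4, and then feeding the resulting normality into Criterion $1'$ rather than Criterion 1.
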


The above lemma is summarized as Lemma \ref{lem:convg} in \S9, and used in the proof of Proposition \ref{prop:step4}.

\bibliographystyle{amsalpha}
\bibliography{qmref}

\providecommand{\bysame}{\leavevmode\hbox to3em{\hrulefill}\thinspace}
\providecommand{\MR}{\relax\ifhmode\unskip\space\fi MR }
\providecommand{\MRhref}[2]{%
  \href{http://www.ams.org/mathscinet-getitem?mr=#1}{#2}
}
\providecommand{\href}[2]{#2}
\begin{thebibliography}{Dum07}

\bibitem[AB56]{AB}
L.~Ahlfors and A.~Beurling, \emph{The boundary correspondence under
  quasiconformal mappings}, Acta Math. \textbf{96} (1956).

\bibitem[Ahl06a]{Ahl}
Lars~V. Ahlfors, \emph{Lectures on quasiconformal mappings}, second ed.,
  University Lecture Series, vol.~38, American Mathematical Society, 2006.

\bibitem[Ahl06b]{Ahl1}
\bysame, \emph{Lectures on quasiconformal mappings}, second ed., University
  Lecture Series, vol.~38, American Mathematical Society, 2006.

\bibitem[AW06]{Wan}
Thomas Kwok-Keung Au and Tom Yau-Heng Wan, \emph{Prescribed horizontal and
  vertical trees problem of quadratic differentials}, Commun. Contemp. Math.
  \textbf{8} (2006), no.~3.

\bibitem[Dum07]{Dum2}
David Dumas, \emph{The {S}chwarzian derivative and measured laminations on
  {R}iemann surfaces}, Duke Math. J. \textbf{140} (2007), no.~2.

\bibitem[FK92]{FarKra}
H.~M. Farkas and I.~Kra, \emph{Riemann surfaces}, Springer-Verlag, 1992.

\bibitem[Gupa]{Gup1}
Subhojoy Gupta, \emph{Asymptoticity of grafting and {T}eichm\"{u}ller rays {I}
  \url{http://arxiv.org/abs/1109.5365}}.

\bibitem[Gupb]{Gup25}
\bysame, \emph{Asymptoticity of grafting and {T}eichm\"{u}ller rays {II} (to
  appear)}.

\bibitem[HM79]{HM}
John Hubbard and Howard Masur, \emph{Quadratic differentials and foliations},
  Acta Math. \textbf{142} (1979), no.~3-4.

\bibitem[HZ86]{HarZag}
J.~Harer and D.~Zagier, \emph{The {E}uler characteristic of the moduli space of
  curves}, Invent. Math. \textbf{85} (1986), no.~3.

\bibitem[Jen57]{JS1}
James~A. Jenkins, \emph{On the existence of certain general extremal metrics},
  Ann. of Math. (2) \textbf{66} (1957).

\bibitem[Kon92]{Kont}
Maxim Kontsevich, \emph{Intersection theory on the moduli space of curves and
  the matrix {A}iry function}, Comm. Math. Phys. \textbf{147} (1992), no.~1.

\bibitem[Min93]{Min1}
Yair~N. Minsky, \emph{Teichm\"uller geodesics and ends of hyperbolic
  {$3$}-manifolds}, Topology \textbf{32} (1993), no.~3, 625--647.

\bibitem[Min96]{Min2}
\bysame, \emph{Extremal length estimates and product regions in {T}eichm\"uller
  space}, Duke Math. J. \textbf{83} (1996), no.~2, 249--286.

\bibitem[Pen88]{Penner}
R.~C. Penner, \emph{Extremal lengths on {D}enjoy domains}, Proc. Amer. Math.
  Soc. \textbf{102} (1988), no.~3, 641--645.

\bibitem[Pom]{Pom}
Ch. Pommerenke, \emph{Boundary behaviour of conformal maps}, Grundlehren der
  Mathematischen Wissenschaften [Fundamental Principles of Mathematical
  Sciences], vol. 299, Springer-Verlag.

\bibitem[Str66]{JS2}
Kurt Strebel, \emph{\"{U}ber quadratische {D}ifferentiale mit geschlossenen
  {T}rajektorien und extremale quasikonforme {A}bbildungen}, Festband 70.
  {G}eburtstag {R}. {N}evanlinna, Springer, Berlin, 1966, pp.~105--127.

\bibitem[Str84]{Streb}
\bysame, \emph{Quadratic differentials}, Ergebnisse der Mathematik und ihrer
  Grenzgebiete (3) [Results in Mathematics and Related Areas (3)], vol.~5,
  Springer-Verlag, 1984.

\bibitem[Wol95]{Wolf1}
Michael Wolf, \emph{On the existence of {J}enkins-{S}trebel differentials using
  harmonic maps from surfaces to graphs}, Ann. Acad. Sci. Fenn. Ser. A I Math.
  \textbf{20} (1995), no.~2, 269--278.

\end{thebibliography}

\end{document}